\newcommand*\bigcdot{\mathpalette\bigcdot@{.8}}
\newcommand*\bigcdot@[2]{\mathbin{\vcenter{\hbox{\scalebox{#2}{$\m@th#1\bullet$}}}}}
\newcommand{\flim@}[2]{
    \vtop{\m@th\ialign{##\cr
    \hfil$#1\operator@font lim$\hfil\cr
    \noalign{\nointerlineskip\kern1.5\ex@}#2\cr
    \noalign{\nointerlineskip\kern-\ex@}\cr}}%
}
\newcommand{\flim}{%
    \mathop{\mathpalette\varlim@{\leftarrowfill@\scriptscriptstyle}}\nmlimits@
}
\theoremstyle{plain}
\newtheorem{theorem}{Theorem}[section]
\newtheorem{corollary}[theorem]{Corollary}
\newtheorem{lemma}[theorem]{Lemma}
\newtheorem{prop}[theorem]{Proposition}
\newtheorem*{theorem_introduction}{Kadeishvili's Theorem}
\theoremstyle{definition}
\newtheorem{definition}[theorem]{Definition}
\theoremstyle{remark}
\newtheorem{remark}[theorem]{Remark}
\title{Formality of $A_\infty$-Algebras}
\author{Carl Felix Waller\footnote{wallerfelix@gmail.com}}
\date{}
\titleformat{\section}[block]{\bfseries\filcenter}{\thesection}{1em}{}
\titleformat{\subsection}[hang]{\bfseries}{\thesubsection}{1em}{}
\begin{document}

\maketitle
\abstract{This master's thesis contains an introduction to $A_\infty$-algebras and homological perturbation theory. We then discuss the formality of compact K\"ahler manifolds and present a direct proof of a homotopy transfer principle of $A_\infty$-algebras, also known as Kadeishvili's Theorem.}
\section{Introduction}
Given a graded vector space $A$ equipped with a multiplication $m_2$ and a square zero derivation $m_1$, we say that $A$ is an $A_\infty$\textit{-algebra} if there exists a possibly infinite sequence of higher homotopies $m_n:A^{\otimes n}\rightarrow A$ for $n\geq 3$, which measure to what extent $m_2$ fails to be associative. Explicitly, when we sum over all possible combinations of maps $A^{\otimes n}\rightarrow A$ the \textit{higher order associatity conditions} must be satisfied:
\begin{equation} \label{eq:a_infty_introduction}
\sum_{\bigcdot} \pm m_{\bigcdot}(1^{\bigcdot}\otimes m_{\bigcdot}\otimes 1^{\bigcdot})=0.
\end{equation}
For $n=1$, equation \eqref{eq:a_infty_introduction} restates the fact that $m_1$ squares to zero. For $n=2$, equation \eqref{eq:a_infty_introduction} says that $m_1$ is a derivation with respect to $m_2$. The first new equation is $n=3$, which tells us that the associator of $m_2$ is equal to the boundary of $m_3$. In this sense, $A_\infty$-algebras can be seen as a generalization of differential graded algebras, where the associativity condition is relaxed. $A_\infty$-algebras have better deformation properties than differential graded algebras. However, seemingly simple tasks, such as checking that composition of $A_\infty$-algebra morphisms are well-defined, are already non-trivial. The main purpose of this thesis is to give an introductory account of $A_\infty$-algebras, and carry out computations at an extreme level of detail not usually found in the literature.

\vspace{4mm}

We begin with a short discussion of the history of $A_\infty$-algebras. The topological origin of $A_\infty$-algebras goes back to the PhD thesis of James Stasheff \cite{stasheff_H_space_1}, \cite{stasheff_H_space_2}. In it, he uses $A_\infty$-spaces to characterize loop spaces. The story of recognizing loop spaces began with Milnor, who proved that many topological groups have the homotopy type of loop spaces \cite[Theorem 5.2 (1)]{milnor}. This result was sharpened by Dold and Lashof, who proved that more generally, many associative $H$-spaces have the homotopy type of loop spaces \cite[6.2 Theorem]{lashof_dold}. An obvious question is whether the assumption of an associative $H$-space can further be weakened so that one gets an ``if and only if'' statement. The way was paved by Sugawara \cite{sugawarah_space}, \cite{sugawara_group}, who showed that the key idea is to relax the associativity assumption of the multiplication, i.e.\ allow the multiplication to be merely homotopy associative. Stasheff attempted to further relax Sugawara's assumption of the existence of a homotopy inverse and found that he had to introduce higher homotopies. He was able to improve upon Sugawara's results and show that under mild assumptions a space $X$ has the homotopy type of a loop space if and only if $X$ is an $A_\infty$-space, i.e.\ $X$ admits a unital multiplication $M_2$ and higher homotopies 
\[
M_n:K_n \times X^n \rightarrow X,\quad \forall n\geq 3
\]
which measure the failure of $M_2$ to be associative. Here $K_n$ are the Stasheff polytopes which are convex polytopes similar to simplices but with a boundary that is built inductively out of copies of $K_1,...,K_{n-1}$. For more detail on Stasheff's proof see \cite[§6-2]{kane}. A summary of research on $H$-spaces and $A_\infty$-spaces is given, for example, in the lecture notes \cite{stasheff_book}.

\vspace{4mm}

$A_\infty$-spaces continued to play a role in homotopy theory: We mention here Boardman and Vogt (\cite{boardman_hom_everthing}, \cite{boardman_book}), Adams \cite{adams} and finally Peter May \cite{may}, who revolutionized the field by characterizing all iterated loop spaces with his introduction of the concept of an operad. In the languages of operads, there exists a non-symmetric operad $\mathcal{A}_\infty$, and algebras over $\mathcal{A}_\infty$ are what we call $A_\infty$-algebras (see \cite{markl_simplex} or \cite{markl_operads_book} for more detail).

\vspace{4mm}

The notion of an $A_\infty$-space naturally leads to the simpler notion of an $A_\infty$-algebra: Given a CW-complex $X$ with an $A_\infty$-space structure, one can show that the cellular chain complex of $X$ has the structure of an $A_\infty$-algebra \cite[Theorem 2.3]{stasheff_H_space_2}. For example, if $\Omega(X)$ is any loop space, by Milnor \cite{Milnor_cw_spaces} there exists a CW-complex $Y$ that has the same homotopy type as $\Omega(X)$. The concatenation map carried over to $Y$ can be deformed to a strictly unital multiplication and higher homotopies in the loop space can also be deformed in $Y$ so that $Y$ carries the structure of an $A_\infty$-space. The space of cellular chains on $Y$ may serve as the prime example of an $A_\infty$-algebra. After their discovery, $A_\infty$-algebras continued to find applications in topology (\cite{smirnov}, \cite{proute}, \cite[Theorem 6.4]{huebschmann_homotopy_type}, \cite{huebschmann_metacyclic}, and of particular importance for this thesis \cite{kadeishvili_russian}). Later $A_\infty$-algebras also started to appear in geometry and physics (\cite{getzler}, \cite{stasheff_book_quasi_hopf}, \cite{fukaya_morse}, \cite{kontsevich_mirror_symmetry},
\cite{kontsevich_soibelman}, \cite{seidel_fukaya}, \cite{FOOO})

\vspace{4mm}

We now give a brief overview of the content of this thesis: In Section \ref{sec:notation}, we set up some notation used throughout the thesis, and in Section \ref{sec:a_infty}, we give an introduction to $A_\infty$-algebras. In Section \ref{sec:formality}, we start by defining minimal models of $A_\infty$-algebras. Minimal models of differential graded commutative algebras were first introduced by Sullivan \cite{sullivan_minimal_model}, following early developments by Quillen \cite{quillen}. For a textbook introduction to the theory of minimal models, see \cite{yves_rational_homotopy}. Minimal models are an important tool in rational homotopy theory \cite{griffiths_rational_homotopy}. A particularly interesting class of spaces in rational homotopy theory are formal spaces, i.e.\ spaces that admit a minimal model which is formal. More generally, we say that any differential graded commutative algebra $A$ is formal if we can connect it to its homology via quasi-isomorphisms:
\[
(H_*(A),0)\xleftarrow{\simeq} \bigcdot \ ... \ \bigcdot \xrightarrow{\simeq} (A,d).
\]
In Section \ref{sec:formality}, we proceed to define a notion of formality for balanced $A_\infty$-algebras. Balanced $A_\infty$-algebras (also called commutative $A_\infty$-algebras or $C_\infty$-algebras) are $A_\infty$-algebras whose higher homotopies vanish on shuffle products (\cite{kadeishvili_C_infty}, \cite{markl_C_infty}). At the end of Section \ref{sec:formality}, the formality of compact K\"ahler manifolds in terms of the $A_\infty$-algebra definition of formality is proven. The rest of the thesis focuses more on minimal models in general and not specifically their formality.

\vspace{4mm}

In Section \ref{sec:quasi-cofree}, we discuss the one-to-one correspondence between $A_\infty$-algebras and certain differential graded coalgebras. It was already Stasheff who noticed this equivalent description of $A_\infty$-algebras in his original paper \cite{stasheff_H_space_2}. This is the basis for the use of homological perturbation theory to study $A_\infty$-structures. The development of homological perturbation theory predates the discovery of $A_\infty$-algebras: \cite{eilenberg_mac_lane}, \cite{brown}. A fundamental result regarding minimal models of $A_\infty$-algebras is the following theorem:

\begin{theorem_introduction} (Paraphrasing Theorem \ref{thm:minimal_model})
Let $(A,m)$ be an $A_\infty$-algebra and $H_*(A)$ the homology of the chain complex $(A,m_1)$. There exists an $A_\infty$-structure ($H_*(A),m')$ and a morphism of $A_\infty$-algebras
\[
i: (H_*(A),m') \rightarrow (A,m)
\]
such that $m'_1=0$, $m_2'$ is the induced product and $i_1$ induces the identity on homology. Moreover, the $A_\infty$-structure on $H_*(A)$ is unique up to isomorphism of $A_\infty$-algebras.
\end{theorem_introduction}

This theorem was proved by Kadeishvili \cite{kadeishvili_russian} for the case where $A$ is a differential graded algebra\footnote{Kadeishvili's argument may also be extended to the general case \cite{zhou} or \cite{petersen}.}. For an english version of Kadeishvili's paper see \cite{kadeishvili}. Kadeishvili and Huebschmann \cite[$2.1_*$]{HUKA} and Gugenheim, Lambe and Stasheff (\cite{gugenheim_89}, \cite[Theorem 4.2]{gugenheim_91}) almost contemporaneously published a new way to prove this result in full generality. The idea is to prove everything in the coalgebra setting via the perturbation lemma. The perturbation lemma allows perturbations of differentials to be carried along certain chain equivalences. In fact, once the notation and theory are set up the perturbation lemma allows for a very streamlined proof of Kadeishvili's theorem. We illustrate this in Section \ref{sec:pertub_lemma}. There are many generalizations of Kadeishvili's Theorem with similar methods of proof (\cite{manetti}, \cite{berglund}).

\vspace{4mm}

Another ansatz is to show that the cobar construction of certain cooperads are cofibrant. In this case the result follows from more general but also more abstract statements about operads (\cite{Moerdijk}, \cite{Hinich}, \cite{markl_homotopy_algebras}, \cite{yau}). Markl also gives explicit formulas in \cite{markl_a_infty} derived from the abstract statements in \cite{markl_homotopy_algebras}.

\vspace{4mm}

A different way to deal with $A_\infty$-structures is by representing the $A_\infty$-relations by sums over planar trees. This idea was first proposed by Konsevith and Soibelman \cite{kontsevich_soibelman} and was further developed in \cite{Laan} and by Loday and Vallette \cite[Theorem 10.3.3]{loday_vallette}. More recently, Hicks (\cite[Theorem 2.2.1]{hicks_algebras}, \cite[Appendix A]{hicks}) shows that the method of using trees can be extended to curved $A_\infty$-algebras.

\vspace{4mm}

Merkulov \cite{merkulov} showed that there is a way to prove the existence part of Kadeishvili's Theorem via a straightforward induction argument. In Section \ref{sec:explicit_proofs} we establish that the formulas for minimal models as obtained in Section \ref{sec:pertub_lemma} coincide with the formulas in \cite{merkulov} up to sign difference. Following this, we give a direct induction proof of the existence part of Kadeishvili's Theorem. The main part of the proof is concerned with reproving that the projection map obtained from the perturbation lemma is, in fact, an $A_\infty$-morphism. The main difficulty is correctly reordering the large nested sums. We do not claim that these methods are unknown to Merkulov or other experts in the field. However, as far as we know, these computations appear nowhere else in the literature. 

\vspace{4mm}

Lastly, in Appendix \ref{sec:appendix_b} we show that the same sum manipulation methods can be used to give an explicit proof of the fact that the composition of $A_\infty$-morphisms is well-defined.

\section*{}
\textbf{Acknowledgement.} The author is indebted and very grateful to his supervisor Will J. Merry for introducing him to the subject of $A_\infty$-algebras. His constant support and valuable comments during numerous conversations have truly been a great source of motivation and inspiration.

\section{Notation} \label{sec:notation}
Let $k$ be an arbitrary field and $(C,\Delta,\epsilon)$ a coalgebra over $k$. Define the \textit{iterated coproduct} by $\Delta_1 := \Delta$ and inductively $\Delta_{n+1}:= (\Delta\otimes \mathrm{id}^{\otimes n})\circ \Delta_n$. For any $c\in C$ the iterated coproduct in Sweedler notation is denotes as:
\[
\Delta_{n-1}(c) =\sum_{(c)} c_{(1)} \otimes ... \otimes c_{(n)}.
\]
We refer to Sweedler's book \cite{coalgebras} for a detailed introduction to coalgebras. If $u:k\rightarrow C$ is a coaugmentation map, i.e. there is a choice of unit $1\in C$, we set $\bar{C}:= \ker \epsilon$ and define the reduced coproduct as:
\begin{align*}
	\bar{\Delta}: &\bar{C}\rightarrow \bar{C}\otimes \bar{C}\\
	&c\mapsto \Delta(c) - c\otimes 1 - 1\otimes c.
\end{align*}
$(\bar{C},\bar{\Delta})$ is a ``coalgebra without a counit'', which we refer to as a \textit{non-counitial coalgebra}. If $(C,\Delta,\epsilon,u)$ is a coaugmented coalgebra we call the associated non-counital coalgebra $(\bar{C},\bar{\Delta})$, the \textit{reduced coalgebra of C}. We define the reduced iterated product as $\bar{\Delta}^1 := \bar{\Delta}$ and inductively $\bar{\Delta}^{n+1} := ( \bar{\Delta}\otimes \mathrm{id}^{\otimes n})\circ \bar{\Delta}^{n}$. In Sweedler notation we write for $c\in \bar{C}$ \[
\bar{\Delta}^n(c) =: \sum_{(c)} c^{(1)}\otimes ... \otimes c^{(n)}.
\]
We use superscripts to distinguish this from the other iterated product.

\vspace{4mm}

Next we introduce some basic conventions and definitions of graded objects, closely following \cite{loday_vallette}: Let $V$ be a vector space over $k$. We say that $V$ is a \textit{graded vector space} if there exists a family of vector spaces $\{V_n\}_{n\in\mathbb{Z}}$ such that
\[
V = \bigoplus_{n\in \mathbb{Z}}V_n.
\]
An element $v\in V_n$ is called \textit{homogeneous} and of \textit{degree} $n$, we write $|v| = n$.

\begin{definition}
    Let $V = \bigoplus_n V_n$ and $W = \bigoplus_n W_n$ be two graded vector spaces. A linear map $f:V\rightarrow W$ is called a \textit{morphism of graded vector spaces of degree r}, if there are linear maps $f_n :V_n \rightarrow W_{n+r}$ such that $f = \bigoplus_n f_n$.
\end{definition}
If the degree of a morphism of graded vector spaces is not specified, it is always assumed to be $0$. The tensor product of two graded vector spaces is defined as the graded vector space $V\otimes W = \bigoplus_n (V\otimes W)_n$ with
\[
(V\otimes W)_n := \bigoplus_{n=i+j} V_i\otimes W_j.
\]
Let $s_{\bigcdot} k$ denote the graded vector space of dimension $1$ generated by one element $s$ in degree $1$. Given a graded vector space $V$, we call
\[
sV := s_{\bigcdot} k \otimes V
\]
the \textit{suspension} of $V$. The obvious isomorphism $s:V\rightarrow sV$ is called the \textit{suspension map}. The tensor product makes the category of graded vector spaces into a monodical category. For $V, W$ two graded vector spaces we define a map
\[
\tau_{V,W} : V\otimes W \rightarrow W\otimes V
\]
which maps homogeneous elements as follows
\[
v\otimes w \mapsto (-1)^{|v||w|}w\otimes v.
\]
$\tau$ is called the switching map, and makes the monodical category into a symmetric monodical category, also called the category of \textit{sign-graded vector spaces}. When defining the tensor product of maps, we use the Koszul sign convention:
\begin{lemma}
	Let $f:V\rightarrow V'$ and $g:W\rightarrow W'$ be two morphisms of graded vector spaces of degree $|f|, |g|$, respectively. Then there is a unique morphism of graded vector spaces $ f\otimes g: V\otimes W \rightarrow V'\otimes W' $ of degree $|f|+ |g|$ such that for $w\in W$ and homogeneous elements $v\in V$
	\[
	f\otimes g (v\otimes w) = (-1)^{|v||g|} (f(v)\otimes g(w)).
	\]
	Moreover, if $f':V'\rightarrow V''$ and $g':W'\rightarrow W''$ are two more morphisms of graded vector spaces then 
	\[
	(f\otimes g) \circ (f'\otimes g') = (-1)^{|g| |f'|} (f'\circ f)\otimes (g'\circ g).
	\]
\end{lemma}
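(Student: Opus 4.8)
The plan is to build the map $f\otimes g$ using the universal property of the underlying (ungraded) tensor product, verify it has the claimed degree by looking at homogeneous summands, and then prove the interchange law by a direct computation on homogeneous elementary tensors, where the only real work is keeping track of Koszul signs.

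\emph{Uniqueness} is immediate and I would dispose of it first. By the definition of the tensor product of graded vector spaces, $(V\otimes W)_n=\bigoplus_{i+j=n}V_i\otimes W_j$, so $V\otimes W$ is spanned as a $k$-vector space by elements $v\otimes w$ with $v\in V$ and $w\in W$ homogeneous. The prescribed formula determines the value of any candidate map on all such generators, hence on all of $V\otimes W$ by $k$-linearity; so at most one linear map with the stated property exists.

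\emph{Existence.} For each pair $(i,j)$ I would note that $(v,w)\mapsto(-1)^{i\lvert g\rvert}\,f(v)\otimes g(w)$ is a $k$-bilinear map $V_i\times W_j\to V'\otimes W'$ (once $i$ is fixed the sign is just a scalar), so by the universal property it induces a linear map $V_i\otimes W_j\to V'\otimes W'$. Taking the direct sum over all $(i,j)$ defines $f\otimes g:V\otimes W\to V'\otimes W'$, and it satisfies the required formula on homogeneous $v$ by construction. Since $f$ raises degree by $\lvert f\rvert$ and $g$ by $\lvert g\rvert$, the image of $V_i\otimes W_j$ lies in $V'_{i+\lvert f\rvert}\otimes W'_{j+\lvert g\rvert}\subseteq(V'\otimes W')_{i+j+\lvert f\rvert+\lvert g\rvert}$, which shows $f\otimes g$ is a morphism of graded vector spaces of degree $\lvert f\rvert+\lvert g\rvert$. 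The one subtlety I would flag explicitly is that, because the sign depends on $\lvert v\rvert$, one genuinely has to build the map degree by degree rather than ``extend the formula by bilinearity'' on inhomogeneous $v$; splitting $v=\sum v_i$ into homogeneous pieces shows this causes no inconsistency.

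\emph{The interchange law.} Here I would evaluate both sides on a generator $v\otimes w$ with $v$ homogeneous and apply the defining formula twice. The key point is that the tensor of maps applied first alters the degree of the first tensor factor (by $\lvert f'\rvert$), so when the second tensor of maps is applied, its Koszul sign is computed against this shifted degree: one gets a factor $(-1)^{(\lvert v\rvert+\lvert f'\rvert)\lvert g\rvert}$ where na\"ively one might expect $(-1)^{\lvert v\rvert\lvert g\rvert}$. Comparing the accumulated sign with the single sign $(-1)^{\lvert v\rvert(\lvert g\rvert+\lvert g'\rvert)}$ coming from $(f'\circ f)\otimes(g'\circ g)$ applied to $v\otimes w$, everything cancels except the correction $(-1)^{\lvert g\rvert\lvert f'\rvert}$, which is exactly the claimed sign; since homogeneous elementary tensors span $V\otimes W$, equality on them gives the equality of maps. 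I expect the only obstacle to be bookkeeping — correctly identifying which intermediate degree each Koszul sign ``sees'' — since that is precisely where the extra sign is manufactured; there is no conceptual difficulty beyond this.
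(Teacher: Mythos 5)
Your construction of $f\otimes g$ is fine: uniqueness holds because homogeneous elementary tensors span $V\otimes W$, existence follows by assembling, for each pair $(i,j)$, the bilinear map $(v,w)\mapsto(-1)^{i|g|}f(v)\otimes g(w)$ on $V_i\times W_j$, and the degree count is correct. The paper states this lemma without proof, so there is nothing to compare against on that side; your remark that the $|v|$-dependent sign forces a degree-by-degree construction rather than a single appeal to bilinearity is exactly the point worth making explicit.

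The interchange law is where your bookkeeping goes wrong, and the slip is precisely of the kind you warned yourself about. The stated domains force the order of application: since $f:V\rightarrow V'$ and $f':V'\rightarrow V''$, in any composite that typechecks $f\otimes g$ must act first and $f'\otimes g'$ second, consistent with the right-hand side $(f'\circ f)\otimes(g'\circ g)$. Hence the intermediate first-factor degree is $|v|+|f|$ (not $|v|+|f'|$), and the Koszul sign of the second application is computed against $|g'|$ (not $|g|$): the accumulated sign is $(-1)^{|v||g|}\cdot(-1)^{(|v|+|f|)|g'|}$, and after cancelling $(-1)^{|v|(|g|+|g'|)}$ the correction is $(-1)^{|f||g'|}$, not $(-1)^{|g||f'|}$. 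Your factor $(-1)^{(|v|+|f'|)|g|}$ corresponds to applying $f'\otimes g'$ first, which is incompatible with $f':V'\rightarrow V''$. You land on the printed sign only because the lemma as printed has its primes transposed: the standard identity, as in Loday--Vallette, reads $(f'\otimes g')\circ(f\otimes g)=(-1)^{|f||g'|}(f'\circ f)\otimes(g'\circ g)$, and two compensating transpositions do not constitute a verification. Redo the computation with the order of application fixed by the typing, and either record the sign $(-1)^{|f||g'|}$ or note the misprint in the statement.
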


Let us recall some basic definitions for algebras and the dual definitions for coalgebras:
\begin{definition}
	A \textit{graded algebra} is a graded vector space $A$ and an algebra, such that the product and the unit are maps of degree $0$. A linear map $d:A\rightarrow A$ of degree $-1$ is called a \textit{derivation} if
	\[
	 d \circ \nu = \nu \circ (d\otimes \mathrm{id} + \mathrm{id}\otimes d).
	\]
	A \textit{differential graded algebra} is a graded algebra equipped with a derivation that squares to zero.
\end{definition}
It immediately follows that a derivation satisfies $d(1) =0$. We abbreviate differential graded algebra by \textit{dg-algebra}. There are analogue definitions in the dual setting:
\begin{definition} \label{def:dg-coalgebra}
	A \textit{graded coalgebra} is a graded vector space $C$ and a coalgebra, such that the coproduct and counit are maps of degree $0$. A linear map $d:C\rightarrow C$ of degree $-1$ is called a \textit{coderivation} if
	\begin{equation} \label{eq:coderivation}
		\Delta \circ d = (d\otimes \mathrm{id} + \mathrm{id}\otimes d) \circ \Delta.
	\end{equation}
	A \textit{differential graded coalgebra} is a graded coalgebra equipped a coderivation that squares to zero.
\end{definition}
Denote by $\mathrm{Coder}(C)$ the $k$-vector space of coderivations. Note that $d(1) = 0$ does \uline{not} follow for coderivations on coalgebras. We abbreviate differential graded coalgebra by \textit{dg-coalgebra}. A differential graded coalgebra is \textit{coaugmented} if the coaugmentation map is of degree $0$; here the graded coalgebra $k$ has all elements in degree $0$.

\section{\texorpdfstring{$A_\infty$}{TEXT}-Algebras}
\label{sec:a_infty}
This section introduces the notion of an $A_\infty$-algebra and relates this to the notion of a dg-algebra. For a basic introduction to $A_\infty$-algebras also see \cite{keller_long}, \cite{keller_short}, \cite{keller_addendum} or \cite{markl_operads_book}. For a more complete introduction see \cite{loday_vallette} or the PhD thesis \cite{proute} by Prout{\'e}.

\begin{definition}
	An \textit{$A_\infty$-algebra} is a graded vector space $A$ together with a family of linear maps $m_n: A^{\otimes n} \rightarrow A$ of degree $n-2$, such that for all $n\in \mathbb{N}$
	\begin{equation}\label{eq:higher_assoziativity_condition}
		\sum_{s=1}^n\sum_{\substack{r+t=n-s\\r,t\geq 0}} (-1)^{rs+t}m_{r+t+1}(\mathrm{id}^{\otimes r}\otimes m_s\otimes \mathrm{id}^{\otimes t})= 0.
	\end{equation}
	These equations are called the \textit{higher associativity conditions}. 
\end{definition}
Writing out the first three higher associativity conditions we get
\begin{align*}
	&n=1:\quad m_1 m_1 = 0,\\
	&n=2:\quad m_1 m_2 - m_2(\mathrm{id}\otimes m_1 + m_1\otimes \mathrm{id}) = 0,\\
	\mathrm{and}\ &n=3:\quad m_2 ( \mathrm{id}\otimes m_2 - m_2 \otimes \mathrm{id}) +m_1m_3 \\
	& \qquad +m_3(m_1\otimes \mathrm{id}\otimes \mathrm{id}+ \mathrm{id}\otimes m_1 \otimes \mathrm{id}+\mathrm{id}\otimes \mathrm{id}\otimes m_1) = 0.
\end{align*}
The first equation implies that $m_1$ is a differential. The second equation implies that $m_1$ behaves similarly to a derivation with respect to the $m_2$ map. The third equation implies that in general $m_2$ is not associative. We say that $m_2$ is associative up to the homotopy $m_3$.

\begin{definition}
	Let $(A,m),(A',m')$ be two $A_\infty$-algebras. A \textit{morphism of} $A_\infty$\textit{-algebras} $f:(A,m)\rightarrow (A',m')$ is a family of linear maps $f_n:A^{\otimes n}\rightarrow A'$ of degree $n-1$ such that for all $n\in \mathbb{N}$
\begin{equation}\label{eq:morphism}
\sum_{r+s+t=n} (-1)^{rs+t}f_{r+t+1}(\mathrm{id}^{\otimes r}\otimes m_s\otimes \mathrm{id}^{\otimes t})= \sum_{i_1+...+i_r = n} (-1)^{l(i_1,...,i_r)} m_r'(f_{i_1}\otimes...\otimes f_{i_r})
\end{equation}
where the sign on the right-hand side is given by
\begin{equation} \label{eq:A_infty_signs}
	l(i_1,...,i_r):= \sum_{1\leq j < k\leq n}(i_k-1)i_j.
\end{equation}
\end{definition}
If $f':(A',m')\rightarrow (A'',m'')$ is another morphism we define the composition $f'\circ f:(A,m)\rightarrow (A'',m'')$ as
\begin{equation} \label{eq:a_infty_composition}
    (f'\circ f)_n := \sum_{i_1+...+i_r = n} (-1)^{l(i_1,...,i_r)} f'_r(f_{i_1}\otimes...\otimes f_{i_r}).
\end{equation}
It is not immediate that this is well-defined. In Proposition \ref{prop:composition} we give a proof using the coalgebra describtion of $A_\infty$-algebras, and in appendix \ref{sec:appendix_b} we present a direct proof. The identity morphism $\mathrm{id}_A :(A,m)\rightarrow (A,m)$ is defined as $(\mathrm{id}_A)_1 = \mathrm{id}_A$ and $(\mathrm{id}_A)_n = 0$ for all $n>1$. This is indeed a morphism of $A_\infty$-algebras, i.e. we have for all $n\geq 1$
\begin{align*}
&\sum_{n=r+s+t} (-1)^{rs+t}(\mathrm{id}_A)_{r+t+1}(\mathrm{id}_A^{\otimes r}\otimes m_s\otimes \mathrm{id}_A^{\otimes t})\\
&= \sum_{i_1+...+i_r = n} (-1)^{l(i_1,...,i_r)}m_r((\mathrm{id}_A)_{i_1}\otimes...\otimes (\mathrm{id}_A)_{i_r}).
\end{align*}
This gives us the category of $A_\infty$-algebras. The isomorphisms take the following form:
\begin{prop} \label{prop:a_infty_isomorphism}
	A morphism of $A_\infty$-algebras $f:(A,m)\rightarrow (A',m')$ is an isomorphism if and only if $f_1$ is an isomorphism.
\end{prop}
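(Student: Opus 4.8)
The plan is to prove the two implications separately, the substance lying in the ``if'' direction. For the ``only if'' direction, suppose $f$ has a two-sided inverse $g$ in the category of $A_\infty$-algebras, so $g\circ f=\mathrm{id}_A$ and $f\circ g=\mathrm{id}_{A'}$. In the composition formula \eqref{eq:a_infty_composition} the only index tuple $(i_1,\dots,i_r)$ with $i_1+\dots+i_r=1$ is $r=1$, $i_1=1$, for which the sign is an empty sum; hence $(g\circ f)_1=g_1\circ f_1$ and $(f\circ g)_1=f_1\circ g_1$. Since $(\mathrm{id}_A)_1=\mathrm{id}_A$ and $(\mathrm{id}_{A'})_1=\mathrm{id}_{A'}$, the map $f_1$ has a two-sided inverse, i.e.\ it is an isomorphism of graded vector spaces.

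For the ``if'' direction, assume $f_1$ is an isomorphism. I would build a candidate inverse $g=(g_n)_{n\ge1}$, with $g_n\colon(A')^{\otimes n}\to A$ of degree $n-1$, by \emph{forcing} the relation $f\circ g=\mathrm{id}_{A'}$ at the level of families of maps. Put $g_1:=f_1^{-1}$. Extracting from \eqref{eq:a_infty_composition} the $r=1$ summand gives $(f\circ g)_n=f_1\circ g_n+R_n$, where $R_n$ collects the summands with $r\ge2$; in each such summand every block $i_j$ satisfies $i_j\le n-1$, so $R_n$ depends only on $g_1,\dots,g_{n-1}$. Because $f_1$ is invertible we may therefore define $g_n:=-f_1^{-1}\circ R_n$, which is a well-founded recursion on $n$; a degree count gives $|g_n|=n-1$, and by construction $f\circ g=\mathrm{id}_{A'}$ holds as an identity of families of maps.

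Two points then remain. First, one must check that $g$ actually satisfies the morphism equations \eqref{eq:morphism}; I would argue by induction on the arity $n$. For $n=1$ the equation $g_1m'_1=m_1g_1$ is obtained by rearranging the arity-$1$ equation $f_1m_1=m'_1f_1$ for $f$. For the inductive step, one shows that the arity-$n$ morphism defect of $g$ becomes zero after postcomposition with the injective map $f_1$, by expanding that composite and cancelling using the morphism equations for $f$, the higher associativity conditions \eqref{eq:higher_assoziativity_condition} for $m$ and $m'$, the relation $f\circ g=\mathrm{id}_{A'}$ in arities $\le n$, and the inductive hypothesis. Carrying out this reordering of the resulting nested sums is the main obstacle of the proof, and is exactly the sort of bookkeeping this thesis specializes in. Second, once $g$ is known to be a morphism, $g\circ f=\mathrm{id}_A$ follows formally: $g\circ f$ and $\mathrm{id}_A$ are morphisms $(A,m)\to(A,m)$ agreeing in arity $1$, and $f\circ(g\circ f)=(f\circ g)\circ f=f=f\circ\mathrm{id}_A$ by associativity and unitality of composition; since $(f\circ-)_n=f_1\circ(-)_n+(\text{terms of strictly smaller arity})$ and $f_1$ is invertible, left composition with $f$ is injective on families, whence $g\circ f=\mathrm{id}_A$ by induction on arity. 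Thus $g$ is a two-sided inverse of $f$.

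As a remark, once the correspondence between $A_\infty$-algebras and their bar constructions is available (Section~\ref{sec:quasi-cofree}), a shorter argument is possible: $f$ corresponds to a morphism of tensor coalgebras whose linear part $f_1$ is invertible, every such coalgebra morphism is automatically an isomorphism by a filtration argument, and its inverse, still commuting with the codifferentials, corresponds to the desired inverse $A_\infty$-morphism. The direct proof above avoids this machinery, at the cost of the sum manipulation flagged as the main step.
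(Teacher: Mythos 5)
Your setup coincides with the paper's: the ``only if'' direction via the arity-$1$ component of \eqref{eq:a_infty_composition}, and the recursion $g_1=f_1^{-1}$, $g_n=-f_1^{-1}\circ R_n$ forcing $f\circ g=\mathrm{id}_{A'}$, which is exactly \eqref{eq:inverse} (your sign is the correct one; the displayed formula in the paper omits the minus). The gap is in what you call the first remaining point: the verification that $g$ satisfies the morphism equations \eqref{eq:morphism} is the entire content of the ``if'' direction, and you only describe a strategy for it --- induct on arity, postcompose the defect with $f_1$, ``expand and cancel'' using the higher associativity conditions, the morphism equations for $f$, and $f\circ g=\mathrm{id}_{A'}$ --- without exhibiting the cancellation. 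Moreover, as stated the target is slightly off: what one can show vanishes directly is the arity-$n$ morphism defect of the composite family $f\circ g$, which equals $f_1$ applied to the arity-$n$ defect of $g$ \emph{plus} lower-arity defects of $g$ interleaved with higher $f_r$'s, so the induction must be threaded through that triangular system rather than through $f_1$ alone. This is doable, but it is precisely the step a proof of this proposition must supply, and your proposal explicitly leaves it as ``the main obstacle''.

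The paper avoids this computation entirely, by the route you relegate to a closing remark (Proposition \ref{prop:inverse}): pass to the coalgebra morphism $F$ via Corollary \ref{cor:A_infty_morphisms=F}, construct a right inverse $G$ by your recursion and a left inverse $G'$ by a second recursion, both merely as coalgebra morphisms, conclude $G=G'$ by the standard two-sided-inverse argument, and then obtain compatibility with the differentials for free from $Gd'=Gd'FG=GFdG=dG$. Note the reversal of order: the paper first establishes two-sided invertibility of $G$ as a coalgebra map and \emph{deduces} that $g$ is an $A_\infty$-morphism, whereas you propose to first prove by hand that $g$ is a morphism and then deduce left-invertibility formally. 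Your second step (injectivity of left composition with $f$ on families, plus associativity of composition) is fine, but it is the easy half. To make the proposal a proof, either carry out the deferred sum manipulation or promote your remark to the main argument.
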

One direction is immediate. For the other direction assume $f_1$ is an isomorphism. We inductively define the inverse $g:(A',m')\rightarrow (A,m)$, starting with $g_1 := f_1^{-1}$. Let $n>1$, we must have
\begin{equation*} 
	0= (f\circ g)_n = \sum_{n=i_1+...+i_r} (-1)^{l} f_r(g_{i_1}\otimes ... \otimes g_{i_r}),
\end{equation*}
so we set
\begin{equation} \label{eq:inverse} 
    g_n := f_1^{-1}\big(\sum_{\substack{n=i_1+...+i_r\\r>1}} (-1)^{l} f_r(g_{i_1}\otimes ... \otimes g_{i_r})\big).
\end{equation}
It remains to show that $g$ is also a left-inverse of $f$ and that $g$ is a valid morphism of $A_\infty$-algebras, this is proved in Proposition \ref{prop:inverse}. The following proposition shows that dg-algebras are $A_\infty$-algebras.

\begin{prop} \label{prop:sub_category}
	The category of dg-algebras\footnote{The category of differential graded algebras means only allowing morphisms of degree $0$} forms a sub-category of the category of $A_\infty$-algebras.
\end{prop}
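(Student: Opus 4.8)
The plan is to produce a functor $\Phi$ from the category of dg-algebras to the category of $A_\infty$-algebras which is the identity on underlying graded vector spaces, injective on objects and on morphisms, and which preserves identities and composition; exhibiting such a functor is exactly what is meant by realizing dg-algebras as a subcategory. On objects, a dg-algebra $(A,\nu,d)$ is sent to $(A,m)$ with $m_1:=d$, $m_2:=\nu$ and $m_n:=0$ for $n\geq 3$; the required degrees $|m_1|=-1$ and $|m_2|=0$ match $|d|=-1$ and $|\nu|=0$. On morphisms, a dg-algebra morphism $\varphi\colon A\to A'$ is sent to the family $f$ with $f_1:=\varphi$ and $f_n:=0$ for $n\geq 2$; the required degree $|f_1|=0$ matches $|\varphi|=0$.

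First I would check that $(A,m)$ satisfies the higher associativity conditions \eqref{eq:higher_assoziativity_condition}. For $n\geq 4$ every summand contains a factor $m_s$ or $m_{r+t+1}$ of arity at least $3$ — if $s\leq 2$ then $r+t+1=n-s+1\geq 3$ — hence every summand vanishes and the condition holds trivially. For $n=1,2,3$ the surviving summands are precisely the three relations written out after the definition of an $A_\infty$-algebra, which after substituting $m_1=d$ and $m_2=\nu$ read $d\circ d=0$, the derivation axiom $d\circ\nu=\nu\circ(d\otimes\mathrm{id}+\mathrm{id}\otimes d)$, and the associativity $\nu\circ(\mathrm{id}\otimes\nu)=\nu\circ(\nu\otimes\mathrm{id})$; all three hold by the dg-algebra axioms, and there is no extra Koszul sign in the $n=3$ relation because $|m_2|=0$. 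Next I would verify the morphism relations \eqref{eq:morphism} for $f=\Phi(\varphi)$: on the left the factor $f_{r+t+1}$ forces $r=t=0$ and hence $s=n$, leaving only $f_1 m_n$, while on the right every $f_{i_j}$ must equal $f_1$, forcing $r=n$, $i_1=\dots=i_n=1$, with sign $l(1,\dots,1)=0$, leaving only $m'_n(\varphi^{\otimes n})$; both sides vanish for $n\geq 3$, and for $n=1,2$ the equation becomes $\varphi\circ d=d'\circ\varphi$ and $\varphi\circ\nu=\nu'\circ(\varphi\otimes\varphi)$, which are exactly the conditions defining a dg-algebra morphism.

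Finally I would check functoriality. That $\Phi(\mathrm{id}_A)=\mathrm{id}_{(A,m)}$ holds by the very definition of the identity $A_\infty$-morphism. For composition, the same collapse applied to the composition formula \eqref{eq:a_infty_composition} — only the term with all $i_j=1$, hence $r=n$, contributes — gives $\Phi(\varphi'\circ\varphi)_1=\varphi'\circ\varphi$ and $\Phi(\varphi'\circ\varphi)_n=0$ for $n\geq 2$, i.e. $\Phi(\varphi'\circ\varphi)=\Phi(\varphi')\circ\Phi(\varphi)$. The functor is injective on objects, since $d=m_1$ and $\nu=m_2$ are recovered from $\Phi(A)$, and faithful, since $\varphi=f_1$ is recovered from $\Phi(\varphi)$; it is not full (a general $A_\infty$-morphism between dg-algebras may have nonzero higher components $f_n$), which is why the statement restricts to morphisms of degree $0$ on the dg-algebra side, but fullness is not needed here. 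I expect no serious obstacle: the whole argument is the bookkeeping of which summands of \eqref{eq:higher_assoziativity_condition}, \eqref{eq:morphism} and \eqref{eq:a_infty_composition} survive when only $m_1$, $m_2$ (respectively $f_1$) are nonzero, together with the one point that deserves attention, namely checking that the signs built into the $A_\infty$-relations agree with the Koszul signs of the dg-algebra axioms so that $m_2=\nu$ works without a sign twist.
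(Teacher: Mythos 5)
Your proposal is correct and follows essentially the same route as the paper: define $m_1=d$, $m_2=\nu$, $m_n=0$ for $n\geq 3$ (resp.\ $f_1=\varphi$, $f_n=0$ for $n\geq 2$), observe that all summands of \eqref{eq:higher_assoziativity_condition} and \eqref{eq:morphism} with a factor of arity $\geq 3$ vanish, and check that the surviving $n=1,2,3$ terms reduce to the dg-algebra axioms. Your explicit verification of functoriality, injectivity on objects and faithfulness is a bit more thorough than the paper's one-line remark that composition collapses to ordinary composition, but the content is the same.
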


\begin{proof}
Firstly for every dg-algebra $(A,d,\nu)$ we can define an $A_\infty$-structure on $A$ by setting $m_1 = d$, $m_2 = \nu$ and all higher products are set to zero. These maps have the correct degree and the higher associativity conditions are satisfied: Indeed, for $n=1$
\begin{equation}\label{eq:n=1}
	 m_1 m_1 = d^2 = 0,
\end{equation}
for $n=2$
\begin{equation}\label{eq:n=2}
	m_1 m_2 - m_2(\mathrm{id}\otimes m_1 + m_1\otimes \mathrm{id}) + = d\nu - \nu (\mathrm{id}\otimes d+ d\otimes \mathrm{id}) = 0,
\end{equation}
for $n = 3$
\begin{equation}\label{eq:n=3}
	m_2 ( \mathrm{id}\otimes m_2 - m_2 \otimes \mathrm{id}) = \nu (\mathrm{id}\otimes \nu - \nu \otimes \mathrm{id}) = 0,
\end{equation}
and for $n \geq 4$ all the summands vanish
\begin{align*}
	&\sum_{n=r+s+t}(-1)^{rs+t} m_{r+t+1} (\mathrm{id}^{\otimes r}\otimes m_s \otimes \mathrm{id}^{\otimes t})\\
	&= dm_n + \nu ( \mathrm{id}\otimes m_{n-1} - m_{n-1} \otimes \mathrm{id}) = 0
\end{align*}
as $n-1 \geq 3$. Let $(A,d,\nu)$, $(A',d',\nu')$ be two dg-algebras and denote by $(A,m)$, $(A',m')$ the corresponding $A_\infty$-algebras, respectively, as defined above. Let $f:(A,d)\rightarrow (A',d')$ be a morphism of dg-algebras. We claim that $f:(A,m) \rightarrow (A',m')$ given by $f_1 = f$ and all higher $f_n =0$ defines a morphism of $A_\infty$-algebras. So we just check equation \eqref{eq:morphism}: For $n = 1$ equation \eqref{eq:morphism} gives
\[
fd = d'f.
\]
For $n = 2$ we get
\begin{equation*}
	f \nu = \nu' (f\otimes f).
\end{equation*}
Let $n\geq 3$, using that $m_s = 0$ for all $s\geq 3$, the left-hand side of equation \eqref{eq:morphism} becomes
\[
\sum_{r+t=n-1}(-1)^{n-1} f_n (\mathrm{id}^{\otimes r}\otimes d\otimes \mathrm{id}^{\otimes t}) + \sum_{r+t=n-2} (-1)^t f_{n-1}(\mathrm{id}^{\otimes r}\otimes \nu \otimes \mathrm{id}^{\otimes t}).
\]
However, this is zero as $f_n=0$ and $f_{n-1} =0$, because $n,n-1 \geq 2$. Similarly the right-hand side of equation \eqref{eq:morphism} is equal to
\[
\sum_{i_1 +i_2= n}\pm \nu'(f_{i_1}\otimes f_{i_2}) + d' f_n = \nu'(f_{1}\otimes f_{n-1} + (-1)^{n-1}f_{n-1}\otimes f_1) = 0.
\]
Lastly, composition of two such maps in the category of $A_\infty$-algebras, is clearly just regular composition of morphisms of dg-algebras. This completes the proof.
\end{proof}
It is worth mentioning that the category of dg-algebras is \uline{not} a full subcategory of the category of $A_\infty$-algebras. A counter example is the minimal model map of a compact K\"ahler manifold (see the end of Section \ref{sec:formality}).

\section{Formality}
\label{sec:formality}
Let us define dg-algebras which are commutative up to sign:
\begin{definition}
    A dg-algebra $(A,d,\nu)$ is called \textit{graded-commutative} if
    \[
    \nu\circ \tau = \nu.
    \]
\end{definition}
Here $\tau$ is the switching map from Section \ref{sec:notation}. We abbreviate graded-commutative dg-algebra with \textit{dgc-algebra}. Recall the classical definition of formality:
\begin{definition}
    A dgc-algebra $(A,d)$ is called \textit{formal} if there exists a zig-zig of quasi-isomorphisms of dgc-algebras
	\[
	(H_*(A),0)\leftarrow \bigcdot \ ... \ \bigcdot \rightarrow (A,d).
	\]
\end{definition}

Let us prepair the $A_\infty$-algebra definition of formality. Let $(A,m)$ be an $A_\infty$-algebra, note that $m_2$ induces a map on homology
\[
m_2' : H_*(A)^{\otimes 2} \rightarrow H_*(A).
\]
We call this map the \textit{induced product}. It is easy to check that this is well-defined and moreover that $m_2'$ is associative. Hence $(H_*(A),0,m_2')$ is a dg-algebra.
\begin{definition}
	Let $(A,m)$ be an $A_\infty$-algebra. The following data is called a \textit{minimal model} for $A$: An $A_\infty$-structure on the homology $(H_*(A),m')$ with $m_1' = 0$ and $m_2'$ the induced product together with a quasi-isomorphism of $A_\infty$-algebras
	\[
	i: (H_*(A),m')\rightarrow (A,m),
	\]
	such that $i_1$ induces the identity map on homology.
\end{definition}
For the classical definition of minimal models of dgc-algebras we refer the reader to \cite{kaehler} where the theory of minimal models is presented based on Sullivan's paper \cite{sullivan_minimal_model} which was published two years later. The next theorem is proved in \cite{kadeishvili} for the case where $A$ is a dg-algebra. We give a proof in Section \ref{sec:pertub_lemma} (Theorem \ref{thm:existance_minimal_model}).

\begin{theorem} \label{thm:minimal_model}
	Let $(A,m)$ be an $A_\infty$-algebra then there exists a minimal model
	\[
	i: (H_*(A),m') \rightarrow (A,m).
	\]
	Moreover the $A_\infty$-structure is unique up to isomorphism.
\end{theorem}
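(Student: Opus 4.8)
The plan is to prove existence and uniqueness separately, both by induction on the arity $n$ of the higher products, using the higher associativity conditions \eqref{eq:higher_assoziativity_condition} as recursive definitions. For existence, I would build $m'_n$ on $H_*(A)$ together with the components $i_n$ of the morphism simultaneously. Start with $m'_1 = 0$, $i_1$ a choice of quasi-isomorphism (a cycle-choosing section of the projection $\ker m_1 \twoheadrightarrow H_*(A)$, which exists since we may work over a field), and $m'_2$ the induced product. For the inductive step, suppose $m'_k$ and $i_k$ have been constructed for all $k < n$ so that the $A_\infty$-relations and the morphism relations \eqref{eq:morphism} hold in arities $< n$. Rearranging the arity-$n$ morphism equation isolates $m_1 i_n$ on one side as an explicit expression $U_n$ built from the already-defined data; the arity-$n$ $A_\infty$-relation on $H_*(A)$ will similarly express $m'_n$ (modulo $m_1$-exact terms) via $i_1 m'_n = V_n$. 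The key algebraic point is that $U_n$, viewed in $A$, is an $m_1$-cycle — this follows from the lower-arity relations — so it defines a homology class; one sets $m'_n$ to be (the class of) that cycle pulled back along $i_1$'s inverse on homology, and then $U_n - i_1 m'_n$ is $m_1$-exact, allowing $i_n$ to be defined by choosing a bounding chain. This is essentially Merkulov's induction, and the excerpt already flags that Section \ref{sec:explicit_proofs} carries it out, so I would cite that.

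For uniqueness up to $A_\infty$-isomorphism, suppose $i:(H_*(A),m')\to(A,m)$ and $j:(H_*(A),m'')\to(A,m)$ are two minimal models. The goal is an $A_\infty$-isomorphism $\phi:(H_*(A),m')\to(H_*(A),m'')$. Again proceed by induction on arity: set $\phi_1 = \mathrm{id}_{H_*(A)}$ (legitimate since both $i_1$ and $j_1$ induce the identity on homology, so $j_1$ and $i_1$ represent the same map $H_*(A)\to H_*(A)$ up to $m_1$-homotopy). For the step, the morphism equations for $\phi$ in arity $n$ read (schematically) $i_1 \phi_n - \phi_n \cdot(\text{stuff}) = W_n$ with $W_n$ determined by $\phi_{<n}$, $m'$, $m''$; since $m'_1 = m''_1 = 0$ the differential terms on the $H_*$ side vanish, so this directly defines $\phi_n$ once one checks a cocycle condition guaranteeing the right-hand side lands in the image. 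By Proposition \ref{prop:a_infty_isomorphism} the resulting $\phi$ is automatically an isomorphism because $\phi_1 = \mathrm{id}$ is invertible. One should also observe that this $\phi$ is compatible with $i$ and $j$ in the sense that $j\circ\phi$ is $A_\infty$-homotopic to $i$, though the statement as phrased only asks for the abstract isomorphism of the transferred structures.

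I would alternatively, and more cleanly, route everything through the coalgebra picture of Section \ref{sec:quasi-cofree} and the perturbation lemma, as the excerpt promises in Section \ref{sec:pertub_lemma}: an $A_\infty$-structure on $A$ is a coderivation differential on the tensor coalgebra $T^c(sA)$, a deformation retract $H_*(A)\hookrightarrow A \twoheadrightarrow H_*(A)$ (from the field hypothesis) lifts to the cofree coalgebras, the quadratic part of the differential is transferred by the side conditions, and the perturbation lemma transfers the full differential, yielding $m'$ and $i$ at one stroke; uniqueness then follows from the homotopy uniqueness of transferred structures over a contractible base datum. Since the theorem is restated and proved in Section \ref{sec:pertub_lemma}, my proposal here is simply to set up the two inductions above and defer the verification of the cocycle/closedness conditions to that section.

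The main obstacle — and this matches exactly what the excerpt says is the delicate part of Section \ref{sec:explicit_proofs} — is bookkeeping the signs and correctly reordering the large nested sums: showing that the candidate right-hand sides $U_n$ (resp.\ $W_n$) are $m_1$-closed requires expanding the lower-arity $A_\infty$- and morphism-relations, substituting, and watching a cascade of Koszul signs $(-1)^{rs+t}$ and $(-1)^{l(i_1,\ldots,i_r)}$ collapse. Everything else (existence of sections and bounding chains over a field, invoking Proposition \ref{prop:a_infty_isomorphism}) is routine; the sign combinatorics is where all the real work lives.
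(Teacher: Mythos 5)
Your second route (coalgebra picture plus perturbation lemma) is exactly the paper's proof: the $A_\infty$-structure is encoded as a square-zero coderivation on $T^c(sA)$, the deformation retract onto $H_*(A)$ is lifted to the tensor coalgebras with the side conditions \eqref{eq:additiona_conditions}, and the perturbation lemma transfers the full differential, producing $m'$ and $i=I_\infty$ in one stroke (Theorem \ref{thm:existance_minimal_model} and Section \ref{extention_A_infty}). Your direct existence induction is the Merkulov-style argument that the paper also develops explicitly in Section \ref{sec:explicit_proofs}, so that half of the proposal is sound, modulo the deferred sign verification you correctly identify as the real work.

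The uniqueness induction, however, has a genuine gap. If $\phi:(H_*(A),m')\to(H_*(A),m'')$ is to be built by solving the arity-$n$ instance of \eqref{eq:morphism}, note that $\phi_n$ enters that equation only through the terms $\phi_n(\mathrm{id}^{\otimes r}\otimes m'_1\otimes\mathrm{id}^{\otimes t})$ on the left and $m''_1\phi_n$ on the right; since \emph{both} structures are minimal ($m'_1=m''_1=0$), every occurrence of $\phi_n$ vanishes and the arity-$n$ equation is a constraint on $\phi_{<n}$, $m'$, $m''$ alone. There is nothing to "directly define $\phi_n$" from, and no cocycle condition whose verification would rescue this: the equation simply cannot be solved for $\phi_n$, and an arbitrary choice of $\phi_{<n}$ will in general violate it. (Your schematic term $i_1\phi_n$ suggests you are implicitly composing into $A$, i.e.\ constructing $\phi$ with $j\circ\phi\simeq i$; that can be made to work, but it is a different induction, carried out in $A$ where $m_1\neq 0$, and requires a notion of $A_\infty$-homotopy you have not set up.) The paper avoids this entirely: it first proves that the \emph{projection} $p$ extends to an $A_\infty$-morphism $P_\infty:(A,m)\to(H_*(A),m')$ with $(P_\infty)_1=p$ (Theorem \ref{thm:inverse_quasiiso} and its generalization), and then, given a second minimal model $I'_\infty:(H_*(A),m'')\to(A,m)$, takes $\phi:=P_\infty\circ I'_\infty$; its first component is $p\circ(I'_\infty)_1=\mathrm{id}$, so Proposition \ref{prop:a_infty_isomorphism} makes it an isomorphism. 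The existence of that projection morphism is the substantive input your uniqueness argument is missing.
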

Before we can proceed to define formality in the setting of $A_\infty$-algerbas we need to introduce some extra structure. Recall the definitions of a shuffle: Let $p,q$ be two natural numbers. An element of the permutation group $\sigma \in S_{p+q}$ is called a $(p,q)$\textit{-shuffle} if 
\[
\sigma(1)<...<\sigma(p),\quad \sigma(p+1)<...<\sigma(p+q).
\]
Denote the subgroup of $(p,q)$-shuffles by $Sh_{p,q}\subset S_{p+q}$. Let $A$ be some graded vector space. For $n=p+q$ define the shuffle product as follows
\begin{align*}
    \mu_{p,q}:A^{\otimes n} &\rightarrow A^{\otimes n}\\
    (x_1\otimes...\otimes x_n) &\mapsto \sum_{\sigma\in Sh_{p,q}} \mathrm{sgn}(\sigma)\epsilon(\sigma;x_1,...,x_n) x_{\sigma^{-1}(1)}\otimes... \otimes x_{\sigma^{-1}(n)}.
\end{align*}
Where $\epsilon(\sigma;x_1,...,x_n)$ is the Koszul sign (see \cite[Section 1.2]{markl_C_infty}).
\begin{definition}
    An $A_\infty$-algebra $(A,m)$ is called \textit{balanced}, if for all $n\geq 2$ $m_n$ vanishes on all $(p,q)$-shuffels for $p+q=n$, i.e.
    \[
    m_n \circ \mu_{p,q} = 0.
    \]
\end{definition}
Note that any dgc-algebra is a balanced $A_\infty$-algebra, i.e. balanced $A_\infty$-algebras can be seen as a generalization of dgc-algebras. Similarly we call an $A_\infty$-morphism $f$ between two balanced $A_\infty$-algebras \textit{balanced} if each $f_n$ disappears on shuffles. We call a minimal model \textit{balanced} if the structure on the homology and the inclusion morphism is balanced. With these definitions we can proceed to define formality:
\begin{definition}
	A balanced $A_\infty$-algebra $(A,m)$ is called \textit{formal} if there exits a balanced minimal model $i: (H_*(A),m') \rightarrow (A,m)$ with $m_n' = 0$ for $n\geq 3$.
\end{definition}
The following theorem tells us that this definition is a well-defined extension of the classical concept of formality.
\begin{theorem} \label{thm:formality_def_equivalenz}
	Let $(A,d)$ be a dgc-algebra, then the following are equivalent
	\begin{enumerate}
		\item The dgc-algebra $(A,d)$ is formal.
		\item There is a small zig-zag of quasi-isomorphisms of dgc-algebras
		\[
		((H_*(A),0)\leftarrow  \bigcdot \rightarrow (A,d).
		\]
		\item The balanced $A_\infty$-algebra $(A,d)$ is formal.
	\end{enumerate}
\end{theorem}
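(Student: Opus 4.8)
The plan is to prove the cycle $(2)\Rightarrow(1)\Rightarrow(3)\Rightarrow(2)$. The implication $(2)\Rightarrow(1)$ is immediate, since a zig-zag of length two is in particular a zig-zag.

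For $(1)\Rightarrow(3)$ I would argue inside the category of balanced $A_\infty$-algebras. Every dgc-algebra is a balanced $A_\infty$-algebra and every dgc-algebra quasi-isomorphism is a balanced $A_\infty$-quasi-isomorphism, so the classical zig-zag from $(1)$ becomes a zig-zag of balanced $A_\infty$-quasi-isomorphisms
\[
(H_*(A),0)\longleftarrow\bigcdot\ \cdots\ \bigcdot\longrightarrow(A,d).
\]
An $A_\infty$-quasi-isomorphism is invertible up to $A_\infty$-homotopy, and in the balanced setting the homotopy inverse and the chain homotopies may be chosen balanced (balanced homotopy transfer; alternatively one can use the balanced analogue of Theorem~\ref{thm:minimal_model}). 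Inverting the backward arrows and composing via \eqref{eq:a_infty_composition} yields one balanced $A_\infty$-quasi-isomorphism $g\colon(H_*(A),0)\to(A,d)$ whose source carries the $A_\infty$-structure $(m_1=0,\ m_2=m_2',\ m_n=0\text{ for }n\ge 3)$ --- indeed a zig-zag of quasi-isomorphisms identifies the induced products --- and, after precomposing $g$ with a strict balanced automorphism of $(H_*(A),m_2')$ if necessary, we may assume $g_1$ induces the identity on homology. Then $g$ is a balanced minimal model of $(A,d)$ all of whose higher products vanish, so $(A,d)$ is formal as a balanced $A_\infty$-algebra. (Equivalently: by the uniqueness clause of Theorem~\ref{thm:minimal_model} the balanced minimal model of $(A,d)$ is balanced-isomorphic to that of $(H_*(A),0)$, and a dgc-algebra with zero differential is visibly its own minimal model, with vanishing higher products.)

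The implication $(3)\Rightarrow(2)$ is the heart of the matter: a balanced, i.e.\ $C_\infty$-, quasi-isomorphism between honest dgc-algebras must be replaced by a span of \emph{strict} dgc-algebra quasi-isomorphisms. I would use the commutative bar--cobar resolution. Let $\mathrm{B}_c$ denote the commutative bar construction and $\Omega_c$ its left adjoint cobar functor (the free graded-commutative algebra on a desuspension); the composite $\Omega_c\mathrm{B}_c$ comes equipped with a natural dgc-algebra quasi-isomorphism $\varepsilon_B\colon\Omega_c\mathrm{B}_c(B)\to B$, and balanced $A_\infty$-morphisms $B\to B'$ correspond naturally and bijectively to strict dgc-algebra morphisms $\Omega_c\mathrm{B}_c(B)\to B'$. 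Feeding the balanced minimal model $i\colon(H_*(A),0,m_2')\to(A,d)$ into this correspondence turns $i$ into a strict dgc-algebra morphism $\tilde{\imath}\colon\Omega_c\mathrm{B}_c(H_*(A),0,m_2')\to(A,d)$ which, being the adjunct of a quasi-isomorphism, is again a quasi-isomorphism; together with $\varepsilon\colon\Omega_c\mathrm{B}_c(H_*(A),0,m_2')\to(H_*(A),0)$ it is precisely the short zig-zag of $(2)$:
\[
(H_*(A),0)\xleftarrow{\varepsilon}\Omega_c\mathrm{B}_c(H_*(A),0,m_2')\xrightarrow{\tilde{\imath}}(A,d).
\]
(Alternatively one can take as apex a Sullivan model $M\xrightarrow{\simeq}(A,d)$: the zig-zag from $(1)$ exhibits an isomorphism $(A,d)\cong(H_*(A),0)$ in the homotopy category of dgc-algebras, and since $M$ is cofibrant the composite $M\to(A,d)\cong(H_*(A),0)$ is realised by a genuine dgc-algebra quasi-isomorphism $M\to(H_*(A),0)$.)

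I expect the main obstacle to be this rectification step in $(3)\Rightarrow(2)$, and within it the acyclicity of the counit $\varepsilon_B$ of the commutative bar--cobar adjunction --- equivalently, Koszulity of the operad $\mathrm{Com}$, or the vanishing statement underlying Harrison and Andr\'e--Quillen homology. This ingredient is available only over a field of characteristic zero, so implicitly one works over $\mathbb{Q}$ or $\mathbb{R}$, which is also the setting relevant to the intended application to compact K\"ahler manifolds; over an arbitrary field the equivalence $(1)\Leftrightarrow(3)$ can break down. The other two implications are comparatively soft: $(1)\Rightarrow(3)$ needs only that balanced $A_\infty$-quasi-isomorphisms compose and invert up to homotopy, and $(2)\Rightarrow(1)$ is a tautology.
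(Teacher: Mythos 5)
Your proposal is correct and follows essentially the same route as the proof this paper defers to (Loday--Vallette, Theorem 11.4.9): the implication $(1)\Rightarrow(3)$ by inverting the leftward arrows and composing, exactly as in Lemma \ref{lem:quasi-isomorphic-inverse} and Remark \ref{rmk:1->2}, and $(3)\Rightarrow(2)$ by rectification through the commutative bar--cobar resolution. Your caveat that the commutative rectification (Koszulity of $\mathrm{Com}$) requires characteristic zero is a genuine point that the paper's blanket ``arbitrary field'' convention glosses over, and the balancedness of the inverted morphisms is precisely the part the paper itself acknowledges it leaves unproved.
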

For a full proof see \cite[Theorem 11.4.9]{loday_vallette}. Later in Remark \ref{rmk:1->2} we give a partial proof of 1. implies 3..

\vspace{4mm}

In the remainder of this section we show formality of compact K\"ahler manifolds in the $A_\infty$-algebra setting. Let $M$ be a complex manifold. Denote by $\Omega^r(M)$ the space of complex valued differential $r$-forms on $M$, and by $\Omega(M) = \bigoplus_{r\leq 0} \Omega^{-r}(M)$ the graded vector space of complex valued differential forms. The differential $d$ and the wedge product $\wedge$ make this into a dgc-algebra. Denote by $d_c$ the complex conjugate of the differential $d$. Let $D\in \{d,d_c\}$ and define the subspace of \textit{harmonic forms} as the kernel of the corresponding Laplacian operator:
\[
\mathcal{H}_D(M):= \{\vartheta\in \Omega(M): \Delta_D(\vartheta) := (DD^* + D^*D)(\vartheta)=0\}.
\]
Recall that we can split the space of differential $r$-forms into the space of differential $(p,q)$ forms $\Omega^{p,q}(M)$, for $p+q=r$, i.e. $\Omega^r(M)= \bigoplus_{p+q=r}\Omega^{p,q}(M)$. Similarly, the subspaces of harmonic $r$-forms and harmonic $(p,q)$-forms as
\[
\mathcal{H}_D^r(M):= \mathcal{H}_D(M)\cap\Omega^r(M),\quad \mathcal{H}_D^{p,q}(M):= \mathcal{H}_D(M)\cap\Omega^{p,q}(M).
\]
The following theorem does not yet require the metric of $M$ to be K\"ahler. We refer the reader to \cite{griffiths} for a detailed proof.
\begin{theorem} \textnormal{\cite{griffiths}}
    Let $M$ be a compact complex manifold. The space of harmonic $(p,q)$-forms is finite-dimensional. Thus the projection onto the harmonic subspace
    \[
    \mathcal{H}_D^{p,q}:\Omega^{p,q}(M)\rightarrow \mathcal{H}_D^{p,q}(M)
    \]
    is well-defined, and there exists a unique operator, called the Green's operator,
    \[
    G_D:\Omega^{p,q}(M)\rightarrow \Omega^{p,q}(M)
    \]
    with $G_D(\mathcal{H}_D^{p,q}(M))=0$, $G_DD=DG_D$, and $G_DD^*=D^*G_D$ such that for any $\alpha\in \Omega^{p,q}(M)$
    \begin{equation} \label{eq:hodge_theorem}
        \alpha = \mathcal{H}_D^{p,q}(\alpha) + \Delta_DG_D(\alpha).
    \end{equation}
\end{theorem}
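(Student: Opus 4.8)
The statement to prove is the Hodge-theoretic package: for a compact complex manifold $M$, the space of harmonic $(p,q)$-forms $\mathcal{H}_D^{p,q}(M)$ is finite-dimensional, the orthogonal projection $\mathcal{H}_D^{p,q}\colon \Omega^{p,q}(M)\to\mathcal{H}_D^{p,q}(M)$ is well-defined, and there is a unique Green's operator $G_D$ killing harmonic forms, commuting with $D$ and $D^*$, and giving the Hodge decomposition $\alpha = \mathcal{H}_D^{p,q}(\alpha)+\Delta_D G_D(\alpha)$.

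My plan: this is a standard consequence of the general theory of elliptic operators on compact manifolds, so I would first set up the functional-analytic framework and then cite the key analytic input rather than reprove it. Fix a Hermitian metric on $M$; this induces $L^2$ inner products on each $\Omega^{p,q}(M)$ via the pointwise Hermitian pairing and integration against the volume form, and adjoints $D^*$ of $D=\partial$ or $\bar\partial$ are defined with respect to these. The Laplacian $\Delta_D = DD^*+D^*D$ is a second-order, formally self-adjoint, elliptic differential operator on the (finite-rank) bundle $\Lambda^{p,q}T^*M$. The crucial analytic fact, which I would quote from Griffiths–Harris (or Warner, Wells, Demailly), is the fundamental theorem of elliptic operators: on a compact manifold an elliptic self-adjoint operator $L$ has finite-dimensional kernel, closed range, and one has an orthogonal $L^2$-decomposition $\Omega^{p,q}(M) = \ker L \oplus \operatorname{im} L$, with a bounded ``partial inverse'' $G$ (the Green's operator) that is the identity on $(\ker L)^\perp$ composed with projection and vanishes on $\ker L$. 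Elliptic regularity ensures everything stays inside smooth forms, so no Sobolev completions survive in the final statement.

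From this input the individual claims follow quickly. Finite-dimensionality of $\mathcal{H}_D^{p,q}(M) = \ker(\Delta_D|_{\Omega^{p,q}})$ is immediate; note here one uses that $\Delta_D\vartheta = 0$ iff $D\vartheta = 0$ and $D^*\vartheta = 0$, since $\langle \Delta_D\vartheta,\vartheta\rangle = \|D\vartheta\|^2 + \|D^*\vartheta\|^2$ on the compact manifold. Well-definedness of the projection $\mathcal{H}_D^{p,q}$ is then just orthogonal projection onto a finite-dimensional (hence closed) subspace. For the Green's operator, define $G_D$ to be $0$ on $\mathcal{H}_D^{p,q}(M)$ and the inverse of $\Delta_D$ on the orthogonal complement (using that $\Delta_D$ restricts to a bijection of $(\ker\Delta_D)^\perp$ onto itself by the decomposition theorem and closed range); equation \eqref{eq:hodge_theorem} is then a restatement of $\alpha = \mathcal{H}_D^{p,q}(\alpha) + (\alpha - \mathcal{H}_D^{p,q}(\alpha))$ with the second summand rewritten as $\Delta_D G_D(\alpha)$. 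Uniqueness of $G_D$ follows because any operator satisfying $G_D\mathcal{H}_D^{p,q}=0$ and \eqref{eq:hodge_theorem} is forced to agree with this construction on both $\mathcal{H}_D^{p,q}(M)$ and its complement. Finally, the commutation relations $G_D D = D G_D$ and $G_D D^* = D^* G_D$ follow from $\Delta_D D = D\Delta_D$, $\Delta_D D^* = D^*\Delta_D$ together with the fact that $D$ and $D^*$ preserve the decomposition $\ker\Delta_D\oplus\operatorname{im}\Delta_D$ (because $D$, $D^*$ commute with $\Delta_D$ and hence map harmonic forms to harmonic forms): applying the uniqueness-style bookkeeping, $G_D$ inverts $\Delta_D$ on the complement in a way compatible with these operators.

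The main obstacle is genuinely the analytic one: establishing the fundamental theorem for elliptic operators (Gårding's inequality, elliptic estimates in Sobolev spaces, Rellich compactness, and elliptic regularity). Since the excerpt explicitly says ``We refer the reader to \cite{griffiths} for a detailed proof'' and the surrounding theorem is stated with an attribution, I would not reprove this; instead I would present the reduction above carefully, invoke the elliptic package as a black box, and spend whatever detail is warranted on verifying that $\Delta_D$ is elliptic and self-adjoint and on deducing the commutation identities, which are the only parts that are both short and genuinely specific to this situation. A minor technical point to flag is that ellipticity of $\Delta_D = \partial\partial^* + \partial^*\partial$ (as opposed to $d d^* + d^* d$) uses that on a Hermitian manifold the principal symbol of $\Box_\partial$ is half that of the full Laplacian up to lower-order terms — on a Kähler manifold one even has $\Delta_d = 2\Delta_\partial = 2\Delta_{\bar\partial}$, but ellipticity holds on any Hermitian manifold and that is all that is needed here.
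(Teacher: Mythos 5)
Your outline is correct and is exactly the standard argument from the reference the paper cites: the paper itself gives no proof of this theorem, deferring entirely to \cite{griffiths}, and your reduction to the fundamental theorem for self-adjoint elliptic operators on compact manifolds (finite-dimensional kernel, orthogonal decomposition, partial inverse, elliptic regularity), together with the deduction of the commutation relations from $\Delta_D D = D\Delta_D$, is precisely how that source proceeds. Nothing further is needed beyond perhaps tightening the uniqueness step (one should note that the stated conditions force $G_D\alpha$ to lie in the orthogonal complement of the harmonic space, on which $\Delta_D$ is injective), but that is a routine bookkeeping point.
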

Equation \eqref{eq:hodge_theorem} is called the \textit{Hodge decomposition} of $\alpha$. The Hodge decomposition implies an isomorphism between the de Rahm cohomology and the harmonic subspace:
\begin{equation} \label{eq:harmonic=cohomology}
    \mathcal{H}_D(M)\cong H_D^*(M).
\end{equation}

Formality of a manifold refers to formality of it's de Rahm complex:
\begin{definition}
	Let $M$ be a complex manifold, we say $M$ is \textit{formal} if $(\Omega(M),d)$ is formal.
\end{definition}
Let $M$ be a compact K\"ahler manifold. From \cite{griffiths} we know that $\Delta_d=\Delta_{d_c}$ and hence the harmonic subspaces coincide $ \mathcal{H}_d(M)= \mathcal{H}_{d_c}(M) =: \mathcal{H}(M)$. Let $h:= d^*G_d$ and let $p:\Omega(M)\rightarrow H_{d_c}^*(M)$ and $i:H_{d_c}^*(M)\rightarrow\Omega(M)$ be the obvious projection and inclusion maps, then we conclude that
\[	
\begin{tikzcd}
		\arrow[loop left, "h"] (\Omega(M),d) \arrow[r, shift left, "p"] & \arrow[l, shift left, "i"](H_{d_c}^*(M),0)
\end{tikzcd}
\]
is a deformation retract\footnote{See Section \ref{sec:pertub_lemma} for the definition of a deformation retract.}. From Section \ref{sec:explicit_proofs} we get an $A_\infty$-structure on the cohomology: Formally set $\lambda_1:= -h^{-1}$, $\lambda_2 := \wedge$ is the wedge product, and inductively set
\begin{equation}
	\lambda_n := \sum_{\substack{k+l=n\\k,l\geq 1}} (-1)^{k(l+1)}\wedge(h\lambda_k\otimes h\lambda_l).
\end{equation}
Theorem \ref{thm:merkulov} tells us that $m_1=0$ and $m_n:= p\lambda_ni^{\otimes n}$ for $n\geq 2$ defines an $A_\infty$-structure and $i_1:=i$, $i_n:= \lambda_n i^{\otimes n}$ a minimal model\footnote{Because the space of differential forms is graded commutative the resulting minimal model is balanced for a full proof of this see \cite{markl_C_infty}.}:
\[
i:(H_{d_c}^*(M),m_n)\rightarrow (\Omega(M),d).
\]
\begin{theorem}
	Let $M$ be a compact K\"ahler manifold and let $i_n$ be defined as above then
    \[
    i:(H_{d_c}^*(M),0)\rightarrow (\Omega(M),d).
    \]
    is a minimal model and hence $M$ is formal as a balanced $A_\infty$-algebra.
\end{theorem}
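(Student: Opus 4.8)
The plan is to invoke Theorem~\ref{thm:merkulov} to cut the statement down to a single vanishing assertion, and then to feed the K\"ahler condition in through the Green's operator. By Theorem~\ref{thm:merkulov} the data $i_1=i$, $i_n=\lambda_n i^{\otimes n}$ already defines a minimal model $i\colon(H^*_{d_c}(M),m)\to(\Omega(M),d)$ with $m_1=0$, $m_2$ the induced product, and $m_n=p\lambda_n i^{\otimes n}$ for $n\ge2$; it is balanced because $\Omega(M)$ is graded-commutative \cite{markl_C_infty}. So the only thing left to prove is that $m_n=0$ for $n\ge3$. Under the Hodge identification \eqref{eq:harmonic=cohomology} the map $p$ is (up to the inclusion $i$, which is injective) the orthogonal projection onto the harmonic forms, and this projection annihilates $d_c$-exact forms; hence it suffices to show that $\lambda_n i^{\otimes n}$ takes values in $\operatorname{im} d_c$ for every $n\ge3$.

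First I would isolate the two standard facts about compact K\"ahler manifolds that the argument needs \cite{griffiths}: since $\Delta_d=\Delta_{d_c}$, the Green's operator $G:=G_d=G_{d_c}$ commutes with $d_c$ and satisfies $\mathcal H_{d_c}\circ G=0$; and the K\"ahler identity $d^*d_c=-d_c d^*$ holds, so that $h=d^*G$ anticommutes with $d_c$, i.e.\ $hd_c=-d_c h$. From these I would prove the key lemma: \emph{if $\omega\in\Omega(M)$ is both $d$-closed and $d_c$-closed, then $h\omega\in\operatorname{im} d_c$.} Indeed, $G\omega$ is then $d_c$-closed (commutation) with vanishing $d_c$-harmonic part, so its $d_c$-Hodge decomposition collapses to $G\omega=d_c\big(d_c^*G(G\omega)\big)=:d_c\rho$, and hence $h\omega=d^*d_c\rho=-d_c(d^*\rho)\in\operatorname{im} d_c$.

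The vanishing of the higher products would then follow by strong induction on $j\ge2$ of the statement: $h\lambda_j i^{\otimes j}$ takes values in $\operatorname{im} d_c$, and for $j\ge3$ so does $\lambda_j i^{\otimes j}$. For $j=2$ the form $\lambda_2 i^{\otimes 2}=\wedge(i\otimes i)$ is a wedge of harmonic forms, hence both $d$-closed and $d_c$-closed, and the key lemma applies. For $j\ge3$ I would unfold the recursion,
\[
\lambda_j i^{\otimes j}=\sum_{\substack{k+l=j\\k,l\ge1}}(-1)^{k(l+1)}\wedge\!\big((h\lambda_k i^{\otimes k})\otimes(h\lambda_l i^{\otimes l})\big),\qquad h\lambda_1 i:=-i,
\]
and note that $k+l=j\ge3$ forces $k\ge2$ or $l\ge2$ in each summand; the factor with index $\ge2$ lies in $\operatorname{im} d_c$ by the inductive hypothesis, while the other factor lies in $\ker d_c$ (being $-i$, whose image is harmonic, or, if its index is also $\ge2$, again in $\operatorname{im} d_c$ by induction). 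Since $d_c$ is a derivation, $(d_c u)\wedge w=d_c(u\wedge w)$ whenever $d_c w=0$, so every summand, and therefore $\lambda_j i^{\otimes j}$, takes values in $\operatorname{im} d_c$; applying $h$ and using $hd_c=-d_c h$ then gives $h\lambda_j i^{\otimes j}\in\operatorname{im} d_c$, closing the induction. In particular $\lambda_n i^{\otimes n}$ takes values in $\operatorname{im} d_c$ for $n\ge3$, so $m_n=p\lambda_n i^{\otimes n}=0$; thus $i\colon(H^*_{d_c}(M),0)\to(\Omega(M),d)$ is a minimal model with trivial higher structure, and being balanced it exhibits $M$ as formal in the sense of balanced $A_\infty$-algebras.

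The step I expect to be the main obstacle is the key lemma---equivalently, the base case $j=2$. The essential point is to recognise that what one needs is that $h$ sends a form which is \emph{simultaneously} $d$-closed and $d_c$-closed into $\operatorname{im} d_c$, and this is precisely where compactness and the K\"ahler hypothesis enter (through $\Delta_d=\Delta_{d_c}$ and $d^*d_c=-d_c d^*$); the more obvious route---applying the $dd_c$-lemma directly to $\alpha\wedge\beta$---fails, because $\alpha\wedge\beta$ need not be $d$-exact. The remaining work is bookkeeping: carrying the property ``takes values in $\operatorname{im} d_c$'' through the recursion for $\lambda_n$ (the Koszul signs are irrelevant for this), and bearing in mind that $\lambda_1=-h^{-1}$ is a formal symbol occurring only in the combination $h\lambda_1=-\mathrm{id}$, so that $h\lambda_1 i=-i$.
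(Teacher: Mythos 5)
Your proof is correct and follows essentially the same route as the paper's: reduce to showing that $\lambda_n i^{\otimes n}$ is $d_c$-exact (so that $p$ kills it), establish the base case via the $d_c$-Hodge decomposition together with $\Delta_d=\Delta_{d_c}$ and $d^*d_c=-d_cd^*$, and propagate exactness through the recursion for $\lambda_n$ using that $d_c$ is a derivation and that $i$ lands in $d_c$-closed forms. Your ``key lemma'' is the paper's base-case computation repackaged (you decompose $G\omega$ where the paper decomposes $\wedge i^{\otimes 2}$ directly), and your explicit use of $hd_c=-d_ch$ to carry exactness from $\lambda_j i^{\otimes j}$ to $h\lambda_j i^{\otimes j}$ makes precise a step the paper leaves implicit.
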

\begin{proof}
By the discussion before the theorem we must only show that $m_n = 0$ for all $n\geq 3$ or equivalently we must show that there exist maps $f_n:H_{d_c}^*(M)^{\otimes n}\rightarrow \Omega(M)$ such that
\begin{equation} \label{eq:compact_kahler=formal}
    h\lambda_n i^{\otimes n} = d_c f_n ,\quad \forall n\geq 3.
\end{equation}
First observe that $h\lambda_2i^{\otimes 2}$ is $d_c$-exact:
\begin{align*}
    h\lambda_2i^{\otimes 2}&= d^*G_d \wedge i^{\otimes 2}\\
    &= d^*G_d (\mathcal{H} +d_cG_{d_c}d_c^* +d_c^*G_{d_c}d_c)\wedge i^{\otimes 2}\\
    &= G_d\underbrace{d^*\mathcal{H}}_{=0}\wedge i^{\otimes 2} + d^*G_d d_cG_{d_c}d_c^* \wedge i^{\otimes 2} + d^*G_dd_c^*G_{d_c}d_c \wedge i^{\otimes 2}\\
    &= - d_c d^*G_d G_{d_c}d_c^* \wedge i^{\otimes 2} + d^*G_dd_c^*G_{d_c}\wedge \underbrace{(d_c \otimes \mathrm{id}+ \mathrm{id}\otimes d_c)(i\otimes i)}_{= d_ci\otimes i + i\otimes d_ci=0} \\
    &= d_c f_2.
\end{align*}
Where $f_2 =-d^*G_d G_{d_c}d_c^* i^{\otimes 2}$. Formally set $f_1:= d_c^{-1}i$. Let us now prove equation \eqref{eq:compact_kahler=formal} by induction. For $n=3$ \begin{align*}
    h\lambda_3 i^{\otimes 3} &= d^*G_d(i\wedge h\lambda_2i^{\otimes 2} + h\lambda_2i^{\otimes 2}\wedge i)\\
    &= -d_c d^*G_d(i\wedge f_2 + f_2\wedge i).
\end{align*}
Fix $n>3$ and assume equation \eqref{eq:compact_kahler=formal} holds for all $m$ with $3\leq m<n$ then we compute the following
\begin{align*}
    \lambda_n i^{\otimes n} &= \sum_{\substack{k+l=n\\k,l\geq 1}} (-1)^{k(l+1)}\wedge(h\lambda_k\otimes h\lambda_l)i^{\otimes n}\\
    &\overset{\mathrm{i.a.}}{=} \sum_{\substack{k+l=n\\k,l\geq 1}} (-1)^{k(l+1)}\wedge (d_cf_k\otimes d_cf_l)\\
    &= (-1)^{n} d_c \wedge (i \otimes f_{n-1}) + \sum_{\substack{k+l=n\\k> 1, l\geq 1}} (-1)^{k(l+1)}d_c\wedge (f_k\otimes d_cf_l).
\end{align*}
\end{proof}

\section{\texorpdfstring{$A_\infty$}{TEXT}-Algebras and Coalgebras}
\label{sec:quasi-cofree}
Let $A$ be a graded vector space. In this section show how $A_\infty$-algebra structures on $A$ are in one-to-one correspondence to to differentials on the quasi-cofree coalgebra $T^c(A)$. The section is based on \cite{loday_vallette}.

\subsection{Quasi-Cofree Coalgebras}
We build a coalgebra out of a vector space: Let $V$ be a $k$-vector space. Denote by $T^c(V)$ the vector space
\[
T^c(V) := k\oplus V \oplus V^{\otimes 2}\oplus ...\ .
\]
Equip this space with the ``deconcatenation'' product $\Delta:T^c(V) \rightarrow T^c(V)\otimes T^c(V)$ which on $V^{\otimes n}$ for decomposable elements is given by
\[
\Delta(v_1,...,v_n) := \sum_{i=0}^n (v_1,...,v_i)\otimes (v_{i+1},...,v_n)
\]
and $\Delta(1) := 1\otimes 1$. We define a counit $\epsilon :T^c(V) \rightarrow k$, which is the identity on $k$ and zero otherwise. It is easy to check that $(T^c(V),\Delta, \epsilon)$ is a coalgebra, called the \textit{tensor coalgebra}. $T^c(V)$ is coaugmented by the inclusion map, $i:k \rightarrow T^c(V)$. Let $(\bar{T}^c(V), \bar{\Delta})$ denote the reduced coalgebra. Note that the reduced product is given by
\[
\bar{\Delta}(v_1,...,v_n) = \sum_{i=1}^{n-1}(v_1,...,v_i)\otimes (v_{i+1},...,v_n).
\]
\begin{definition}
	A coaugmented coalgebra $C$ is called \textit{conilpotent} if for all $x\in \bar{C}$ there exists an $n\in \mathbb{N}$ such that $\bar{\Delta}_n(x) =0$.
\end{definition}
Let $\pi: T^c(V) \rightarrow V$ denote the projection onto the first component. Note that for any $\omega \in \bar{T}^c(V)$ the $n$-th component (for $n\geq 2$) is given by
\[
\omega_n = (\pi\otimes ... \otimes \pi) \bar{\Delta}^{n-1}(\omega) = (\pi\otimes ... \otimes \pi) {\Delta}_{n-1}(\omega).
\]
We prove that the tensor coalgebra satisfies the following universal property.
\begin{prop} \label{prop:universal_property}
	\textnormal{\cite{loday_vallette}} Let $C$ be a conilpotent coalgebra and $\varphi:C\rightarrow V$ a linear map with $\varphi(1) = 0$, then $\varphi$ extends uniquely to a coaugmented coalgebra morphism $\tilde{\varphi}:C \rightarrow T^c(V)$ such that the following diagram commutes
\[
\begin{tikzcd}
	C \arrow[rd, "\varphi"] \arrow[d, "\tilde{\varphi}", dashed] \\
	T^c(V) \arrow[r, "\pi"] & V.
\end{tikzcd}
\]
\end{prop}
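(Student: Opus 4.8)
The plan is to construct $\tilde\varphi$ explicitly component by component and then verify it is the unique coaugmented coalgebra morphism lifting $\varphi$. First I would define, for a conilpotent coalgebra $C$ with reduced coproduct $\bar\Delta$, the linear maps $\varphi_n := (\varphi\otimes\cdots\otimes\varphi)\circ\bar\Delta^{n-1}\colon C\to V^{\otimes n}$ for $n\geq 1$ (with $\varphi_1:=\varphi$), and then set
\[
\tilde\varphi(c) := \epsilon(c)\cdot 1 + \sum_{n\geq 1}\varphi_n(c).
\]
Conilpotency is exactly what makes this sum finite for each fixed $c\in\bar C$: there is an $N$ with $\bar\Delta_N(c)=0$, so $\varphi_n(c)=0$ for $n$ large (here one uses that $\bar\Delta^{n-1}$ is built from the same iterated coproduct, up to reassociation, as $\bar\Delta_{n-1}$, so they have the same vanishing behaviour). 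On the coaugmentation $1\in C$ we set $\tilde\varphi(1)=1$, consistent with $\varphi(1)=0$. Commutativity of the triangle is immediate: $\pi\circ\tilde\varphi = \varphi_1 = \varphi$ since $\pi$ kills all components $V^{\otimes n}$ with $n\neq 1$ and the constant term.

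Next I would check that $\tilde\varphi$ is a morphism of coalgebras, i.e.\ $\Delta_{T^c(V)}\circ\tilde\varphi = (\tilde\varphi\otimes\tilde\varphi)\circ\Delta_C$, and compatible with counits and coaugmentations. The counit compatibility $\epsilon_{T^c(V)}\circ\tilde\varphi=\epsilon_C$ holds by construction, and $\tilde\varphi(1)=1$ gives the coaugmentation compatibility. For the coproduct identity, it suffices to work on $\bar C$ and compare $n$-th tensor components after applying $\pi^{\otimes i}\otimes\pi^{\otimes j}$; on the left one gets $(\varphi\otimes\cdots\otimes\varphi)\bar\Delta^{i-1}\otimes(\varphi\otimes\cdots\otimes\varphi)\bar\Delta^{j-1}$ composed with $\bar\Delta$ (using the deconcatenation formula for $\Delta_{T^c(V)}$), while on the right the coassociativity of $\bar\Delta$ in $C$ — in the form $(\bar\Delta^{i-1}\otimes\bar\Delta^{j-1})\circ\bar\Delta = \bar\Delta^{i+j-1}$ — reassembles the same expression. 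This coassociativity bookkeeping is the technical heart of the argument.

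For uniqueness, suppose $\psi\colon C\to T^c(V)$ is any coaugmented coalgebra morphism with $\pi\circ\psi=\varphi$. Writing $\psi = \epsilon(-)\cdot 1 + \sum_n \psi_n$ with $\psi_n$ the $V^{\otimes n}$-component, the morphism property forces $\psi_n = (\pi\otimes\cdots\otimes\pi)\circ\Delta_{n-1}^{T^c(V)}\circ\psi = (\pi\psi\otimes\cdots\otimes\pi\psi)\circ\bar\Delta^{n-1} = \varphi_n$, exactly the formula defining $\tilde\varphi$; here one uses the remark recorded just before the proposition that the $n$-th component of an element of $\bar T^c(V)$ is recovered by applying $\pi^{\otimes n}$ to its iterated coproduct. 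Hence $\psi=\tilde\varphi$.

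The main obstacle is the second step: verifying that $\tilde\varphi$ genuinely respects the coproducts. This is not a one-line check because the deconcatenation coproduct on $T^c(V)$ splits a length-$n$ tensor in all $n+1$ ways, so matching it against $(\tilde\varphi\otimes\tilde\varphi)\Delta_C$ requires carefully identifying $\bar\Delta^{n-1}$ with a reassociated $\bar\Delta_{n-1}$ and invoking coassociativity of $\bar\Delta$ to split the iterated coproduct at an arbitrary position. Everything else — finiteness via conilpotency, the counit/coaugmentation compatibilities, the triangle, and uniqueness — is comparatively routine once this identity is in hand.
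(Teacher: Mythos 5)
Your proposal is correct and follows essentially the same route as the paper: derive the componentwise formula $\tilde\varphi(x)_n=(\varphi\otimes\cdots\otimes\varphi)\bar{\Delta}^{n-1}(x)$ from the constraints to get uniqueness, then define $\tilde\varphi$ by that formula and use conilpotency for finiteness. You are in fact slightly more thorough than the paper, which stops after checking well-definedness and leaves the verification that $\tilde\varphi$ respects the deconcatenation coproduct (your coassociativity step $(\bar{\Delta}^{i-1}\otimes\bar{\Delta}^{j-1})\circ\bar{\Delta}=\bar{\Delta}^{i+j-1}$) implicit.
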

\begin{proof}
	We first show uniqueness: Let $C$ be a coaugmented coalgebra with coaugmentation map $u:k\rightarrow C$ such that $\tilde{\varphi}:C \rightarrow T^c(V)$ is an extension as in the proposition. Note that $C$ is equal to $k_{\bigcdot}1 \oplus \bar{C}$ as a vector space. Since morphisms of coaugmented coalgebras commute with the coaugmentations, we have
	\begin{equation}\label{eq:1}
		\tilde{\varphi}(1)=1.
	\end{equation}	
	Let $x\in \bar{C}$, then since $\tilde{\varphi}$ commutes with counits, we have
	\begin{equation} \label{eq:2}
		\tilde{\varphi}(x)_0=0.
	\end{equation}
	By compatibility with $\varphi$, we must have 
	\begin{equation} \label{eq:3}
		\tilde{\varphi}(x)_1=\varphi(x).
	\end{equation}
	Lastly, for $n\geq 2$ we compute
	\begin{align} \label{eq:4}
		\tilde{\varphi}(x)_n &=(\pi\otimes ... \otimes \pi) \bar{\Delta}^{n-1}(\tilde{\varphi}(x))\nonumber\\
		&=(\pi\otimes ... \otimes \pi) (\tilde{\varphi}\otimes ...\otimes \tilde{\varphi})\bar{\Delta}^{n-1}(x)\nonumber\\
		&=(\pi \tilde{\varphi}\otimes ... \otimes \pi \tilde{\varphi}) \bar{\Delta}^{n-1}(x)\nonumber\\
		&=(\varphi\otimes ... \otimes \varphi) \bar{\Delta}^{n-1}(x)\nonumber\\
		&=(\varphi\otimes ... \otimes \varphi) \sum_{(x)}x^{(1)}\otimes ... \otimes x^{(n)}\nonumber\\
		&=\sum_{(x)}\varphi(x^{(1)})\otimes ... \otimes \varphi(x^{(n)}).
	\end{align}
	From equations \eqref{eq:1}-\eqref{eq:4} we see that $\tilde{\varphi}$ is uniquely determined by $\varphi$. To show existence, we simply use equations \eqref{eq:1}-\eqref{eq:4} to define $\tilde{\varphi}$. It remains to check whether, for $x\in \bar{C}$ we have 
	\[
	\tilde{\varphi}(x) : = \sum_n (\varphi\otimes ... \otimes \varphi) \bar{\Delta}^{n-1}(x) \in T^c(V).
	\]
	This is ensured by the nilpotentcy of $C$, i.e.\ the sum is finite. 
\end{proof}
A coalgebra $C$ is called \textit{cofree}, if it is equal to $T^c(V)$ for some vector space $V$. Proposition \ref{prop:universal_property} tells us that a morphism $\psi$ from a conilpotent coalgebra $C$ into the cofree coalgebra $T^c(V)$ is uniquely determined by the first component $\pi\circ \psi$.

\vspace{4mm}

We have an analogous result in the graded framework. That is, if $(V,\{V_n\}_{n\in \mathbb{Z}})$ is a graded vector space. We equip the coalgebra $T^c(V)$ with a grading
\[
T^c(V)_n =\bigoplus_{i_1+...+i_r = n} V_{i_1}\otimes ... \otimes V_{i_r}.
\]
It is easy to check that the decomposition map is of degree $0$, thus $T^c(V)$ is a graded coalgebra. 
A graded coalgebra $C$ is called \textit{quasi-cofree} if it is equal to $T^c(V)$ for some graded vector space $V$. Before proving something about coderivations on $T^c(V)$ we need the following lemma.
\begin{lemma} \label{lem:d_respects_coaugmentation}
	Let $C$ be a coaugmented coalgebra and $d:C\rightarrow C$ a coderivation then the image of $d$ is contained in the kernel of the counit.
\end{lemma}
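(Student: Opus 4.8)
The plan is to chase the defining coderivation identity through the two counit axioms, taking care \emph{not} to use $d(1)=0$, which — as the preceding remark stresses — is false in general. Throughout I identify $C\otimes k\cong C\cong k\otimes C$ canonically, so that the counit axioms read $(\mathrm{id}_C\otimes\epsilon)\circ\Delta=\mathrm{id}_C$ and $(\epsilon\otimes\mathrm{id}_C)\circ\Delta=\mathrm{id}_C$.

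First I would apply $\mathrm{id}_C\otimes\epsilon$ to both sides of $\Delta\circ d=(d\otimes\mathrm{id}_C+\mathrm{id}_C\otimes d)\circ\Delta$. The left-hand side becomes $(\mathrm{id}_C\otimes\epsilon)\circ\Delta\circ d=d$. Since $\epsilon$ has degree $0$, all Koszul signs in what follows are trivial; in particular $(\mathrm{id}_C\otimes\epsilon)\circ(d\otimes\mathrm{id}_C)=(d\otimes\mathrm{id}_k)\circ(\mathrm{id}_C\otimes\epsilon)$, so the first summand on the right contributes $d\circ(\mathrm{id}_C\otimes\epsilon)\circ\Delta=d$ as well. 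Cancelling, we are left with the identity
\[
(\mathrm{id}_C\otimes(\epsilon\circ d))\circ\Delta=0 .
\]

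To conclude, compose this with $\epsilon$ on the left: again using that $\epsilon$ is of degree $0$ one has $\epsilon\circ(\mathrm{id}_C\otimes(\epsilon\circ d))=(\mathrm{id}_k\otimes(\epsilon\circ d))\circ(\epsilon\otimes\mathrm{id}_C)$, and composing with $\Delta$ and invoking the other counit axiom yields $\epsilon\circ d=0$, i.e.\ $\operatorname{im} d\subseteq\ker\epsilon$.

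I do not expect a genuine obstacle: the argument is a short diagram chase, and the only thing to monitor is the interplay of the Koszul sign rule with the identifications $C\otimes k\cong C\cong k\otimes C$ — every sign that appears is $+1$ precisely because the counit has degree $0$. (One could run the same computation in Sweedler notation, obtaining $d(c)=d(c)+\sum_{(c)}(-1)^{|c_{(1)}|}c_{(1)}\,\epsilon(d(c_{(2)}))$ and then applying $\epsilon$; the sign $(-1)^{|c_{(1)}|}$ is harmless since it only survives when $|c_{(1)}|=0$. Incidentally, coaugmentedness of $C$ is not actually needed here; it is merely the setting relevant for the tensor coalgebra treated next.)
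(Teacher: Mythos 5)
Your argument is correct, and it takes a genuinely different route from the one in the paper. You run the standard counit diagram chase: applying $\mathrm{id}_C\otimes\epsilon$ to the coderivation identity kills the first summand against the left-hand side and leaves $(\mathrm{id}_C\otimes(\epsilon\circ d))\circ\Delta=0$, and a second application of $\epsilon$ together with the other counit axiom forces $\epsilon\circ d=0$. The only delicate point is the Koszul sign $(-1)^{|c_{(1)}|}$ coming from moving the degree $-1$ map $\epsilon\circ d$ past $c_{(1)}$, and you correctly observe that it is invisible because $\epsilon(c_{(1)})$ vanishes unless $|c_{(1)}|=0$. The paper instead uses the coaugmentation to split $C=k_{\bigcdot}1\oplus\bar{C}$, writes $d(1)$ and $d(\bar c)$ as a scalar plus a reduced part, computes $\Delta\circ d$ twice (once directly, once via the coderivation identity), and compares components in the four-fold decomposition of $C\otimes C$ to conclude the scalar vanishes -- a two-step, case-by-case argument. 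Your version is shorter, avoids the case split, and, as you note, does not use the coaugmentation at all, so it proves the slightly stronger statement that any coderivation on a graded \emph{counital} coalgebra lands in $\ker\epsilon$; the paper's version has the minor virtue of exhibiting concretely where a putative scalar component of $d(x)$ would have to appear in $\Delta(d(x))$, which makes the failure of $d(1)=0$ for coderivations (emphasized just before the lemma) more tangible. No gap.
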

\begin{proof}
The coaumentation map induces a decomposition $C = k_{\bigcdot} 1\oplus \bar{C}$. This gives us a decomposition of the tensor product
\[
C\otimes C = \bar{C}\otimes \bar{C} \oplus \bar{C}\otimes k_{\bigcdot}1 \oplus k_{\bigcdot}1 \otimes \bar{C} \oplus k_{\bigcdot}1\otimes k_{\bigcdot}1.
\]
\textbf{Step 1}: We show that $d(1) \in \bar{C}$. Given the decomposition of $C$ we may write $d(1) = \alpha + \bar{c}$ for some $\alpha\in k$ and $\bar{c}\in \bar{C}$. Then on the one hand
\[
\Delta(d(1)) = \Delta(\alpha) + \Delta(\bar{c}) = \alpha 1\otimes 1+1\otimes \bar{c} + \bar{c}\otimes 1 +\bar{\Delta}(\bar{c}).
\]
On the other hand
\[
\Delta(d(1)) = (d\otimes \mathrm{id}+\mathrm{id}\otimes d)\underbrace{\Delta(1)}_{=1\otimes 1} = 2\alpha 1\otimes 1 + \bar{c}\otimes 1+1\otimes \bar{c} + \bar{c}\otimes \bar{c}.
\]
Combing the last two equations, we get
\[
\bar{\Delta}(\bar{c}) = \alpha 1\otimes 1 + \bar{c}\otimes \bar{c}.
\]
From the direct sum decomposition of the tensor product and the fact that $\bar{\Delta}(\bar{c})\in \bar{C}\otimes \bar{C}$ we get $\alpha=0$.

\vspace{4mm}

\textbf{Step 2}: We show that $d(\bar{c})\in \bar{C}$ for $\bar{c}\in \bar{C}$. Again write $d(\bar{c})=\alpha + \bar{x}$ for some $\alpha\in k$ and $\bar{x}\in \bar{C}$. We compute both sides of equation \eqref{eq:coderivation} as in Step 1:
\begin{align*}
		\alpha 1\otimes 1+1\otimes \bar{x} + \bar{x}\otimes 1 +\bar{\Delta}(\bar{x}) &= \Delta(d(\bar{x}))\\
		&= (d\otimes \mathrm{id} + \mathrm{id}\otimes d)\Delta(\bar{c})\\
		&= (d\otimes \mathrm{id} + \mathrm{id}\otimes d)(1\otimes \bar{c} + \bar{c}\otimes 1 - \bar{\Delta}(\bar{c}))\\
		&= 2\alpha 1\otimes 1 + \bar{c}\otimes d(1) + d(1)\otimes \bar{c} + \bar{x}\otimes 1 + 1 \otimes \bar{x}\\ &\quad +\sum_{(\bar{x})}d(\bar{x}^{(1)}) \otimes \bar{x}^{(2)} +. \bar{x}^{(1)}\otimes d(\bar{x}^{(2)}).
\end{align*}
Comparing terms on both sides, it follows that $\alpha = 0$.
\end{proof}
The next proposition is slightly more general than the version proved in \cite{loday_vallette} as our definition of a coderivation does not demand $d(1) = 0$.
\begin{prop} \label{prop:coderivations}
	Let $V$ be a vector space. Composition with the projection map induces an isomorphism
	\[
	\mathrm{Coder}(T^c(V)) \xrightarrow{\cong} \mathrm{Hom}(T^c(V), V').
	\]
\end{prop}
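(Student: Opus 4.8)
The plan is to show that the linear map
\[
\Phi\colon \mathrm{Coder}(T^c(V))\longrightarrow \mathrm{Hom}(T^c(V),V),\qquad \Phi(d):=\pi\circ d,
\]
where $\pi\colon T^c(V)\to V$ is the projection onto the first tensor factor, is a bijection, and then to read off its inverse from the construction. Throughout I would use Lemma~\ref{lem:d_respects_coaugmentation}: the image of every coderivation lies in $\bar T^c(V)=\ker\epsilon$, so a coderivation $d$ is completely encoded by the components $d(x)_n=(\pi^{\otimes n})\,\bar\Delta^{n-1}(d(x))\in V^{\otimes n}$ recorded earlier in this section.

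First I would prove injectivity of $\Phi$. Suppose $\pi\circ d=0$. Applying $\Delta$ to $d(1)$ and using \eqref{eq:coderivation} together with $\Delta(1)=1\otimes1$ gives $\bar\Delta(d(1))=0$; since $\bar\Delta$ vanishes on $V$ and is injective on $\bigoplus_{n\ge2}V^{\otimes n}$, this forces $d(1)\in V$, hence $d(1)=\pi(d(1))=0$. With $d(1)=0$ the identity \eqref{eq:coderivation} restricts to the reduced co-Leibniz rule $\bar\Delta\circ d=(d\otimes\mathrm{id}+\mathrm{id}\otimes d)\circ\bar\Delta$ on $\bar T^c(V)$, which iterates to $\bar\Delta^{\,n-1}\circ d=\sum_{i=0}^{n-1}(\mathrm{id}^{\otimes i}\otimes d\otimes\mathrm{id}^{\otimes(n-1-i)})\circ\bar\Delta^{\,n-1}$. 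Applying $\pi^{\otimes n}$ and using $\pi\circ d=0$ in the slot carrying $d$ shows $d(x)_n=0$ for all $n\ge2$, while $d(x)_1=\pi(d(x))=0$; hence $d=0$.

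Next I would construct a section of $\Phi$. Given $\varphi\colon T^c(V)\to V$, write $\varphi_k:=\varphi|_{V^{\otimes k}}$ for $k\ge1$ and set $\varphi_0:=\varphi(1)\in V$. Define $D_\varphi\colon T^c(V)\to T^c(V)$ by $D_\varphi(1):=\varphi_0$ and
\[
D_\varphi(v_1\otimes\cdots\otimes v_n):=\sum_{\substack{i,k\ge0\\ i+k\le n}}\pm\, v_1\otimes\cdots\otimes v_i\otimes\varphi_k(v_{i+1}\otimes\cdots\otimes v_{i+k})\otimes v_{i+k+1}\otimes\cdots\otimes v_n,
\]
with the obvious Koszul signs in the graded setting, where the $k=0$ summands insert the constant $\varphi_0\in V$ between tensor factors. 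A quick count of tensor lengths shows that the only terms of $D_\varphi(v_1\otimes\cdots\otimes v_n)$ with output in $V=V^{\otimes1}$ are those with $i=0$, $k=n$, so $\pi\circ D_\varphi=\varphi$. Together with injectivity this makes $\Phi$ a bijection with inverse $\varphi\mapsto D_\varphi$, which is the assertion.

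The one step requiring genuine care — and the place I expect to be the main obstacle — is checking that $D_\varphi$ actually is a coderivation, i.e.\ $\Delta\circ D_\varphi=(D_\varphi\otimes\mathrm{id}+\mathrm{id}\otimes D_\varphi)\circ\Delta$. I would evaluate both sides on a generic word $v_1\otimes\cdots\otimes v_n$ (and on $1$) by expanding the deconcatenation coproduct, and then sort the resulting terms according to the position of the deconcatenation cut relative to the block on which $\varphi_k$ acts: a cut strictly inside the block contributes nothing, a cut to the left or to the right of it matches a term of $\mathrm{id}\otimes D_\varphi$ or $D_\varphi\otimes\mathrm{id}$ respectively, and one verifies that the two collections coincide term by term, signs included. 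The only real content is this indexing and sign bookkeeping; the $k=0$ terms are precisely where the statement strengthens the version in \cite{loday_vallette}, since — as Lemma~\ref{lem:d_respects_coaugmentation} permits — $d(1)=\varphi_0$ need not vanish.
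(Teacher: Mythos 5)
Your argument is essentially the paper's own proof: injectivity via the iterated co-Leibniz rule applied componentwise (using Lemma~\ref{lem:d_respects_coaugmentation}), and surjectivity via the same ``insert $\varphi_k$ into every block'' formula, with the observation that $\pi(1)=0$ (not $\varphi(1)=0$) is what keeps the sum finite. The paper likewise defers the verification that the constructed map satisfies the co-Leibniz identity to ``a direct calculation,'' so your sketch of the cut-versus-block bookkeeping is, if anything, slightly more explicit than the source.
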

\begin{proof}
Notice that both sides are vector spaces, and the map is linear. Let $d$ be a coderivation on $T^c(V)$ such that $\pi\circ d = 0$. We show that $d= 0$: Given the decomposition $T^c(V)= k_{\bigcdot}1\oplus \bar{T}^c(V)$, we first show that $d$ is zero on $ \bar{T}^c(V)$. Let $x\in T^c(V)$, then from Lemma \ref{lem:d_respects_coaugmentation} we know that $d(x)\in \bar{T}^c(V)$. The first component of $d(x)$ is zero by assumption and for $n\geq 2$ we have
\begin{align*}
	d(x)_n &= (\pi\otimes ...\otimes \pi)\Delta_{n-1}(d(x))\\
	&= (\pi\otimes ...\otimes \pi) \sum_i (\mathrm{id}\otimes ... \otimes \mathrm{id}\otimes d\otimes \mathrm{id}\otimes ...\otimes \mathrm{id}) \Delta_{n-1} (x)\\
	&= \sum_i (\pi\otimes ... \otimes \pi\otimes \pi d\otimes \pi\otimes ...\otimes \pi) \Delta_{n-1} (x)\\
	&= 0.
\end{align*}
It follows that $d(x)=0$. Now let $b:T^c(V)\rightarrow V$ be an arbitrary linear map. Our goal is to define a coderivation with first component equal to $b$. From Lemma \ref{lem:d_respects_coaugmentation} we know that for any $y\in T^c(V)$ we must have $d(y)_0=0$. Set $d(1):= b(1)$, for $x\in \bar{T}^c(V)$ set $d(x)_1 := b(x)$,t and for $n\geq 2$ set 
\[
d(x)_n := \sum_i \sum_{(x)} \pi(x_{(1)})\otimes ... \otimes \pi(x_{i-1}) \otimes b(x_{(i)}) \otimes \pi(x_{(i+1)}) \otimes...\otimes \pi(x_{(n)}).
\]
We claim that $d(x):= \sum_{n\geq 1} d(x)_n$ is well-defined, i.e.\ the sum is finite. Notice that $b(1)$ might be non zero, so we cannot use the reduced coproduct to define $d$ and argue that the sum is finite by conilpotency. However, a similar argument works: Without loss of generality assume that $x$ is homogeneous $x = v_1\otimes ... \otimes v_m$, then we claim that $d(x)_n=0$ for all $n\geq m+2$. Indeed, for $n\geq m+2$ every summand of $\Delta_{n-1}(x)$ has at least two $1$s and $b(1)$ might be non-zero but $\pi(1) = 0$ so $d(x)_n=0$. It remains to show that $d$ satisfies the Leibniz rule, this can be verified by a direct calculation.
\end{proof}

Let us extend the definition of coderivations:
\begin{definition}
	Let $f:C\rightarrow C'$ a coalgebra morphism. A linear map $d:C\rightarrow C'$ is called a \textit{coderivation along $f$} if
	\begin{equation} \label{eq:leibniz}
		\Delta' \circ d = (d\otimes f)\circ \Delta + (f\otimes d) \circ \Delta.
	\end{equation}
\end{definition}
Denote by $\mathrm{Coder}_f(C,C')$ the $k$-vector space of coderivations along $f$. Note that in the case $C=C'$ and $f= \mathrm{id}_C$ a coderivation along the identity is just a coderivation in the usual sense. The following proposition is proved similarly as Proposition \ref{prop:coderivations}.

\begin{prop} \label{prop:coderivations_along_f}
Let $f:T^c(V)\rightarrow T^c(V')$ be a coaugmented coalgebra morphism between two cofree coalgebras. Composition with the projection map induces a bijection
\[
 \mathrm{Coder}_f(T^c(V),T^c(V')) \xrightarrow{\cong} \mathrm{Hom}(T^c(V), V').
\]
\end{prop}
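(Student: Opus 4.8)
The plan is to transcribe the proof of Proposition~\ref{prop:coderivations}, replacing $\mathrm{id}$ by $f$ in the appropriate tensor factors. Write $\pi\colon T^c(V)\to V$ and $\pi'\colon T^c(V')\to V'$ for the two projections, and note that since $f$ is a morphism of coaugmented coalgebras it satisfies $f(1)=1$ and, commuting with the counits, $f(\bar T^c(V))\subseteq\bar T^c(V')$. The first step is the $f$-analogue of Lemma~\ref{lem:d_respects_coaugmentation}: every $d\in\mathrm{Coder}_f(T^c(V),T^c(V'))$ has image in $\bar T^c(V')=\ker\epsilon'$. The two-step computation of that lemma goes through: writing $d(1)=\alpha1+\bar c$ and expanding $\Delta'(d(1))$ via \eqref{eq:leibniz} together with $f(1)=1$ forces $\alpha=0$; then for $\bar c\in\bar T^c(V)$, writing $d(\bar c)=\alpha1+\bar x$ and expanding $\Delta'(d(\bar c))$, the right-hand side contributes $2\alpha$ to the $k\cdot(1\otimes1)$ summand while the left-hand side contributes $\alpha$, so again $\alpha=0$; here one uses both $f(1)=1$ and $f(\bar T^c(V))\subseteq\bar T^c(V')$ (the latter to keep the terms coming from $\bar\Delta(\bar c)$ out of the $k\cdot(1\otimes1)$ component).

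For injectivity, suppose $\pi'\circ d=0$. Iterating \eqref{eq:leibniz} and using that $f$ is a coalgebra morphism (so $\Delta'\circ f=(f\otimes f)\circ\Delta$) gives, by induction on $n$,
\[
\Delta'_{n-1}\circ d=\sum_{i=1}^{n}\bigl(f^{\otimes(i-1)}\otimes d\otimes f^{\otimes(n-i)}\bigr)\circ\Delta_{n-1}.
\]
Applying $\pi'^{\otimes n}$ yields, for $n\geq2$, $d(x)_n=\sum_i\bigl((\pi'f)^{\otimes(i-1)}\otimes(\pi'd)\otimes(\pi'f)^{\otimes(n-i)}\bigr)\Delta_{n-1}(x)=0$; together with $d(x)_1=\pi'd(x)=0$ and $d(x)_0=0$ from the lemma, this forces $d=0$, so the (evidently linear) map is injective.

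For surjectivity, let $b\colon T^c(V)\to V'$ be arbitrary and put $g:=\pi'\circ f\colon T^c(V)\to V'$. Define $d$ by $d(1):=b(1)\in V'\subseteq\bar T^c(V')$, by $d(x)_1:=b(x)$ for $x\in\bar T^c(V)$, and for $n\geq2$ by
\[
d(x)_n:=\sum_{i=1}^{n}\sum_{(x)}g(x_{(1)})\otimes\cdots\otimes g(x_{(i-1)})\otimes b(x_{(i)})\otimes g(x_{(i+1)})\otimes\cdots\otimes g(x_{(n)}),
\]
using the unreduced iterated coproduct $\Delta_{n-1}$. Exactly as in Proposition~\ref{prop:coderivations}, the sum $d(x)=\sum_{n\geq1}d(x)_n$ is finite: if $x=v_1\otimes\cdots\otimes v_m$ is homogeneous and $n\geq m+2$, every summand of $\Delta_{n-1}(x)$ has at least two factors equal to $1$, hence at least one of them lands in a $g$-slot, and $g(1)=\pi'f(1)=\pi'(1)=0$. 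By construction $\pi'\circ d=b$, so all that remains is to check that $d$ obeys the Leibniz rule \eqref{eq:leibniz}.

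I expect this last verification to be the main obstacle. One must match, on the two sides of $\Delta'\circ d=(d\otimes f+f\otimes d)\circ\Delta$, all the ways of deconcatenating an element of $T^c(V)$ and inserting the single $b$-slot, using that each component of $f$ is itself the deconcatenation-into-$g$'s map supplied by the universal property (Proposition~\ref{prop:universal_property}). This is the same bookkeeping as the corresponding step in Proposition~\ref{prop:coderivations}, only with $f$ in place of the identity; everything else is a direct transcription.
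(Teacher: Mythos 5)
Your proposal is correct and is essentially the paper's own argument: the paper gives no separate proof of Proposition~\ref{prop:coderivations_along_f}, stating only that it ``is proved similarly as Proposition~\ref{prop:coderivations}'', which is exactly the transcription you carry out (in fact in more detail, including the $f$-analogue of Lemma~\ref{lem:d_respects_coaugmentation} and the correct replacement $g=\pi'\circ f$ in the non-$b$ slots). Both you and the paper defer the final Leibniz-rule verification to a direct calculation, so you are at least at the paper's level of rigor.
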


\subsection{\texorpdfstring{$A_\infty$}{TEXT}-algebras and Coalgebras}
Let $(A,m)$ be an $A_\infty$-algebra. Recall that the maps $m_n:A^{\otimes n} \rightarrow A$ are of degree $n-2$. Recall that the suspension map $s:A\rightarrow sA$ is of degree $+1$. We shift the degree of the $m_n$ maps such that we get maps of degree $-1$: Set $b_n := s\circ m_n \circ (s^{-1})^{\otimes n}$ such that the following diagram commutes
\[
\begin{tikzcd}
	(sA)^{\otimes n} \arrow[r, "b_n"] &sA\\
	A^{\otimes n} \arrow[u, "s^{\otimes n}"] \arrow[r, "m_n"'] &A. \arrow[u, "s"']
\end{tikzcd}
\]
By Proposition \ref{prop:coderivations} the map $\sum_{n\geq 1} b_n : T^c(sA) \rightarrow sA$ uniquely lifts to a coderivation $d:T^c(sA)\rightarrow T^c(sA)$. The degree of this coderivation is $-1$. The $A_\infty$-structure ensures that this coderivation squares to zero:
\begin{prop} \label{prop:(A,m)==d^2} Let $A$ be a graded vector space. There is a one-to-one correspondence between $A_\infty$-structures $A$ and coderivations $d$ on $T^c(sA)$ of degree $-1$ such that $d(1) = 0$ and $d^2 = 0$.
\end{prop}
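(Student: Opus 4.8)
The plan is to leverage Proposition \ref{prop:coderivations}, which already gives a bijection between coderivations on $T^c(sA)$ and linear maps $T^c(sA) \to sA$; what remains is to match the extra conditions on each side. Given an $A_\infty$-structure $(m_n)$, we form $b_n := s \circ m_n \circ (s^{-1})^{\otimes n}$, assemble $b := \sum_{n \geq 1} b_n : T^c(sA) \to sA$ (note $b$ vanishes on $k \cdot 1$ and on $sA^{\otimes 0}$-components appropriately), and let $d = d_b$ be the unique coderivation lifting it. First I would record that $d$ has degree $-1$: each $m_n$ has degree $n-2$, and conjugating by $n$ copies of $s^{-1}$ (degree $-1$ each) and one $s$ (degree $+1$) yields degree $(n-2) - n + 1 = -1$, so $b$ is homogeneous of degree $-1$, and by the explicit formula for $d_b$ in the proof of Proposition \ref{prop:coderivations} the coderivation inherits this degree. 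Also $d(1) = b(1) = 0$ since $b_1 = s m_1 s^{-1}$ kills the unit. Conversely, any degree $-1$ coderivation $d$ with $d(1)=0$ corresponds via $\pi \circ d$ to a degree $-1$ map $b : T^c(sA) \to sA$ with $b(1) = 0$; decomposing $b = \sum_n b_n$ by arity and setting $m_n := s^{-1} \circ b_n \circ s^{\otimes n}$ recovers a family of maps of degree $n-2$. So the correspondence of underlying data is immediate from the earlier proposition; the content is entirely in the equivalence $d^2 = 0 \iff \text{higher associativity conditions \eqref{eq:higher_assoziativity_condition}}$.

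For that equivalence, the key observation is that $d^2$ is itself a coderivation (the (anti)commutator of coderivations is a coderivation — degree considerations: $d$ has odd degree so $d^2 = \frac12[d,d]$ is genuinely $d \circ d$), of degree $-2$, and it sends $1 \mapsto d(d(1)) = 0$. By the uniqueness half of Proposition \ref{prop:coderivations}, $d^2 = 0$ if and only if $\pi \circ d^2 = 0$, i.e.\ if and only if the corestriction of $d^2$ vanishes. So I would compute $\pi \circ d^2$ explicitly on the arity-$n$ component $(sA)^{\otimes n}$. Using the formula for a coderivation's action, $d|_{(sA)^{\otimes n}} = \sum_{r+s+t = n} \mathrm{id}^{\otimes r} \otimes b_s \otimes \mathrm{id}^{\otimes t}$ (followed by the coalgebra structure), and then $\pi \circ d^2$ on $(sA)^{\otimes n}$ collapses to $\sum_{r+s+t=n} b_{r+1+t} \circ (\mathrm{id}^{\otimes r} \otimes b_s \otimes \mathrm{id}^{\otimes t})$, with Koszul signs coming from moving $b_s$ (degree $-1$) past the $r$ identity factors. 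Setting this to zero for each $n$ is exactly the associativity relation \eqref{eq:higher_assoziativity_condition} written in the $b$-variables, and the substitution $b_n = s m_n (s^{-1})^{\otimes n}$ transports the sign $(-1)^{rs+t}$ into the $m$-version: conjugation by $s$'s is precisely the operation that converts the "bar" signs into the signs appearing in the definition of an $A_\infty$-algebra.

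The main obstacle — and the only real work — is the sign bookkeeping: verifying that the Koszul signs generated by $d^2$ in the suspended picture ($b$'s of degree $-1$ sliding past identities, assembled via the deconcatenation coproduct) match, term by term after desuspension, the signs $(-1)^{rs+t}$ in \eqref{eq:higher_assoziativity_condition}. I would handle this by first checking that for a degree $-1$ map $b_s$ the factor $\mathrm{id}^{\otimes r} \otimes b_s \otimes \mathrm{id}^{\otimes t}$ contributes a sign $(-1)^r$ relative to the naive composite (from the Koszul rule in Lemma 2.x, since $b_s$ passes $r$ slots of total relevant degree contributing $(-1)^{r|b_s|} = (-1)^r$ on homogeneous elements — more precisely the sign is absorbed into how the second $d$ acts), and then tracking the desuspension isomorphism $s^{\otimes n}$ through $b_{r+1+t}(\mathrm{id}^{\otimes r} \otimes b_s \otimes \mathrm{id}^{\otimes t}) = s \, m_{r+1+t}(\mathrm{id}^{\otimes r} \otimes m_s \otimes \mathrm{id}^{\otimes t}) (s^{-1})^{\otimes n}$ up to a sign depending on $r, s, t$; a short computation with the Koszul convention shows this sign is exactly $(-1)^{rs + t}$, closing the argument. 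One clean way to organize the whole thing is to invoke the standard fact (provable by the same coderivation-uniqueness principle) that under the correspondence $b \leftrightarrow d_b$ the bracket satisfies $\pi \circ (d_b \circ d_{b'}) = b \star b'$ for the "star" (Gerstenhaber-type) product on $\mathrm{Hom}(T^c(sA), sA)$, so that $d^2 = 0 \iff b \star b = 0$, and then note $b \star b = 0$ unwinds entry-by-entry to \eqref{eq:higher_assoziativity_condition}.
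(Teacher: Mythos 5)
Your proposal follows essentially the same route as the paper: lift $b=\sum_n s\,m_n(s^{-1})^{\otimes n}$ to a coderivation via Proposition \ref{prop:coderivations}, observe that $d^2$ is again a coderivation and hence vanishes iff $\pi\circ d^2=0$, and unwind $\pi\circ d^2$ on $(sA)^{\otimes n}$ into $\sum b_{r+t+1}(\mathrm{id}^{\otimes r}\otimes b_s\otimes\mathrm{id}^{\otimes t})$, which desuspends to the higher associativity conditions with the signs $(-1)^{rs+t}$. The only detail to watch is in the converse direction, where the paper inserts the normalization $(-1)^{n(n-1)/2}$ in $m_n:=(-1)^{n(n-1)/2}s^{-1}b_n s^{\otimes n}$ (coming from $s^{\otimes n}(s^{-1})^{\otimes n}=\alpha_n\,\mathrm{id}$ under the Koszul convention) so that the two directions are genuinely inverse to each other.
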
 
\begin{proof}
Let $(A,m)$ be an $A_\infty$-algebra and $d$ be the 
coderivation on $T^c(sA)$ defined above. Then $d^2$ is again a coderivation as
\begin{align*}
	\Delta d^2 &= (d\otimes \mathrm{id} +\mathrm{id}\otimes d) (d\otimes \mathrm{id} +\mathrm{id}\otimes d) \Delta \\
	&= (d^2\otimes \mathrm{id} +d\otimes d - d\otimes d +\mathrm{id}\otimes d^2) \Delta = (d^2\otimes \mathrm{id} +\mathrm{id}\otimes d^2) \Delta.
\end{align*}
The minus comes from the Koszul sign convention. Thus, by Proposition \ref{prop:coderivations}, it suffices to show that $\pi \circ d^2 = 0$. From the proof of Proposition \ref{prop:coderivations} we get on $(sA)^{\otimes n}$
\begin{align*}
	\pi \circ d^2 &= \sum_{n=r+j+t} b_{r+t+1} (\mathrm{id}^r\otimes b_j\otimes \mathrm{id}^t)\\
	&= \sum_{n=r+j+t}(-1)^t s m_{r+t+1} ((s^{\otimes r})^{-1}\otimes \underbrace{s^{-1}b_j}_{= m_j (s^{-1})^{\otimes j}} \otimes (s^{\otimes t})^{-1})\\
	&= s \Big[\sum_{n=r+j+t} (-1)^{rj +t} m_{r+t+1} (\mathrm{id}^r \otimes m_{j} \otimes \mathrm{id}^t)\Big] (s^{\otimes n})^{-1} = 0.
\end{align*}
Lastly, $d(1)$ is zero by definition. Conversely, assume that $d$ is a coderivation which squares to zero and $d(1)=0$. Denote by $b_n$ the restriction of $\pi\circ d$ onto $(sA)^{\otimes n}$. Then $m_n := (-1)^{\frac{n(n-1)}{2}}(s)^{-1} b_n s^{\otimes n}$ defines an $A_\infty$-structure since $(s)^{-1}( \pi \circ d^2) s^{\otimes n} = 0$ are exactly the $A_\infty$-relations.
\end{proof}

Let $(A,m)$ be an $A_\infty$-algebra and $d$ the corresponding coderivation on $T^c(sA)$. We call $(T^c(sA), d)$ the \textit{differential graded coalgebra associated to} $(A,m)$.
\begin{prop} \label{prop:temp_coder}
	Let $V,V'$ be two graded vector spaces. Let $F:T^c(V)\rightarrow T^c(V')$ be a morphism of conilpotent graded coalgebras of degree $0$. Let $d$ be a coderivation on $T^c(V)$ and $d'$ a coderivation on $T^c(V')$. Let $F_n$, $b_n$ denote\footnote{Here $n\geq 0$, so technically $F_0$ is also to be considered in the sum \eqref{eq:something_something_dark_side}, but as $\pi(F(1)) = \pi(1)=0$ these terms are zero regardless, so there is no ambiguity.} to restrictions of $\pi\circ F$, $\pi\circ d$, respectively, to $V^{\otimes n}$, and $b_n'$ the restriction of $\pi' \circ d'$ to $V'^{\otimes n}$. Then the following are equivalent
	\begin{enumerate}
		\item $d'F = Fd$.
		\item $d'(1) = F(d(1))$ and for all $n\geq 1 $ 
		\begin{equation} \label{eq:something_something_dark_side} \sum_{i_1+...+i_r = n} b_r' (F_{i_1}\otimes ...\otimes F_{i_r}) = \sum_{r+s+t = n} F_{r+t+1} (\mathrm{id}^{\otimes r}\otimes b_s \otimes \mathrm{id}^{\otimes t}).\end{equation}
	\end{enumerate}
\end{prop}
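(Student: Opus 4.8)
The plan is to recognise both $d'F$ and $Fd$ as coderivations along the coalgebra morphism $F$, and then to apply the uniqueness part of Proposition \ref{prop:coderivations_along_f}: a coderivation along $F$ into the cofree coalgebra $T^c(V')$ is completely determined by its composite with the projection $\pi'$. Under this dictionary the equality $d'F = Fd$ turns into the equality $\pi'\circ d'F = \pi'\circ Fd$ of maps $T^c(V)\to V'$, and condition (2) is exactly this equality split up over the decomposition $T^c(V) = k\cdot 1 \oplus \bigoplus_{n\ge 1}V^{\otimes n}$.

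First I would verify the two ``coderivation along $F$'' claims. For $Fd$ one computes $\Delta'(Fd) = (F\otimes F)\Delta d = (F\otimes F)(d\otimes\mathrm{id}+\mathrm{id}\otimes d)\Delta$ and rearranges; for $d'F$ one starts from $\Delta'(d'F) = (d'\otimes\mathrm{id}+\mathrm{id}\otimes d')\Delta'F = (d'\otimes\mathrm{id}+\mathrm{id}\otimes d')(F\otimes F)\Delta$. Since $|F|=0$, none of the Koszul signs that appear when an $F$ is commuted past $d$ or $d'$ survive, so both reduce in one line to the Leibniz rule \eqref{eq:leibniz}. I would also note that a morphism of conilpotent coalgebras carries the unique group-like element $1$ to $1$, so $F$ is a coaugmented coalgebra morphism and Proposition \ref{prop:coderivations_along_f} indeed applies; in particular $F$ restricts on the primitives $V$ to the linear map $F_1$.

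Granting this, $d'F = Fd$ iff $\pi'd'F = \pi'Fd$. On the summand $k\cdot 1$: by Lemma \ref{lem:d_respects_coaugmentation} together with primitivity of $d(1)$ and $d'(1)$, both $d'(1)$ and $F(d(1)) = F_1(d(1))$ already lie in $V'$, so $\pi'$ is the identity on them and the condition reads $d'(1) = F(d(1))$. On $V^{\otimes n}$ I would expand both composites: the explicit formula for the lifted coderivation in the proof of Proposition \ref{prop:coderivations} gives $d|_{V^{\otimes n}} = \sum_{r+s+t=n}\mathrm{id}^{\otimes r}\otimes b_s\otimes \mathrm{id}^{\otimes t}$, and since $\pi'F|_{V^{\otimes m}} = F_m$ by definition this yields $\pi'Fd|_{V^{\otimes n}} = \sum_{r+s+t=n}F_{r+t+1}(\mathrm{id}^{\otimes r}\otimes b_s\otimes\mathrm{id}^{\otimes t})$, the right-hand side of \eqref{eq:something_something_dark_side}; dually, the explicit form of a coalgebra morphism into $T^c(V')$ (as in \eqref{eq:4}) gives $F(x)_r = \sum_{i_1+\dots+i_r=n}(F_{i_1}\otimes\dots\otimes F_{i_r})(x)$ for $x\in V^{\otimes n}$, and since $\pi'd'|_{V'^{\otimes r}} = b'_r$ by definition, $\pi'd'F|_{V^{\otimes n}} = \sum_{i_1+\dots+i_r=n}b'_r(F_{i_1}\otimes\dots\otimes F_{i_r})$, the left-hand side of \eqref{eq:something_something_dark_side}.

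The step I expect to be the real work is the sign bookkeeping in this last expansion. Equation \eqref{eq:something_something_dark_side} is stated without explicit signs because all Koszul signs are hidden inside the tensor-of-maps notation, so one has to check that the signs genuinely produced by commuting the degree $-1$ map $b_s$ past the first $r$ tensor factors in the expansion of $d$, and by the Koszul signs in the iterated coproduct used in the formula for $F$, are precisely the ones that notation encodes. Once that is confirmed, nothing further can go wrong, since $F$ has degree $0$ and introduces no extra signs.
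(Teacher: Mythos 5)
Your proposal is correct and follows essentially the same route as the paper: both arguments reduce the statement to the observation that $d'F-Fd$ is a coderivation along $F$ (you phrase this as $d'F$ and $Fd$ each being coderivations along $F$), invoke Proposition \ref{prop:coderivations_along_f} to see that such a map is determined by its composite with $\pi'$, and then identify the weight-$n$ components of $\pi'Fd$ and $\pi'd'F$ with the two sides of \eqref{eq:something_something_dark_side} using the explicit expansions of $d$ and $F$ from Propositions \ref{prop:coderivations} and \ref{prop:universal_property}. Your treatment of the unit component via primitivity of $d(1)$ and $d'(1)$ is in fact slightly more careful than the paper's.
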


\begin{proof}
	Assume that $d'F = F d$. From the proof of Proposition \ref{prop:universal_property} we see that 
	\[
	F = \sum_n \sum_{i_1+...+i_r=n} F_{i_1}\otimes ...\otimes F_{i_r}.
	\]
	From the proof of Proposition \ref{prop:coderivations} we see that
	\[
	d = \sum_{n\geq 0} \sum_{r+s+t=n} \mathrm{id}^{\otimes r}\otimes b_s\otimes \mathrm{id}^{\otimes t}.
	\]
	For $n\geq 1$ the equation $\pi d'F = \pi F d$ restricted to $V^{\otimes n}$ is precisely equation \eqref{eq:something_something_dark_side}. Conversely, assume $F$ satisfies equation \eqref{eq:something_something_dark_side}. First note that $d'F -Fd$ is a coderivation along $F$. Indeed,
	\begin{align*}
		 \Delta'(d'F -Fd) &= (\mathrm{id}\otimes d' + d'\otimes \mathrm{id})\Delta' F - (F\otimes F )\Delta d\\
		 &= (\mathrm{id}\otimes d' + d'\otimes \mathrm{id})(F\otimes F )\Delta - (F\otimes F )(\mathrm{id}\otimes d + d\otimes \mathrm{id})\Delta \\
		 &= (F\otimes (d'F-Fd) + (d'F-Fd)\otimes F)\Delta.
	\end{align*}
	Hence by Proposition \ref{prop:coderivations_along_f} the map $d'F -Fd$ is uniquely determined by its first component $\pi\circ (d'F -Fd)$ which is zero by assumption.
\end{proof}

\begin{corollary} \label{cor:A_infty_morphisms=F}
Let $(A,m)$ and $(A',m')$ be two $A_\infty$-algebras with corresponding dg-coalgebras $(T^c(sA),d)$, $(T^c(sA'),d')$, respectively. There is a one-to-one correspondence between morphisms of $A_\infty$-algebras $f:(A,m)\rightarrow (A',m')$ and morphisms of dg-coalgebras $F:(T^c(sA),d)\rightarrow (T^c(sA'),d')$ of degree $0$.
\end{corollary}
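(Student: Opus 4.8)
The plan is to reduce the statement to the translation criterion of Proposition~\ref{prop:temp_coder}, with everything made concrete by the universal property of the tensor coalgebra (Proposition~\ref{prop:universal_property}) and the suspension conventions fixed in Proposition~\ref{prop:(A,m)==d^2}. Since $T^c(sA)$ is conilpotent, all the relevant uniqueness statements apply.

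\emph{From $f$ to $F$.} Given an $A_\infty$-morphism $f=(f_n)_{n\geq 1}$, for each $n\geq 1$ let $F_n\colon (sA)^{\otimes n}\to sA'$ be the suspended map characterised by $f_n=(-1)^{n(n-1)/2}\,s^{-1}\circ F_n\circ s^{\otimes n}$, so that the $F_n$ stand to the $f_n$ as the $b_n$ stand to the $m_n$ in Proposition~\ref{prop:(A,m)==d^2}; since $|f_n|=n-1$, one checks $|F_n|=0$. As $\pi(1)=0$, the linear map $\varphi:=\sum_{n\geq 1}F_n\colon T^c(sA)\to sA'$ sends $1$ to $0$, hence by Proposition~\ref{prop:universal_property} lifts uniquely to a coaugmented coalgebra morphism $F\colon T^c(sA)\to T^c(sA')$ with $\pi'\circ F=\varphi$ and $|F|=0$. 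It remains to check $d'F=Fd$. By Proposition~\ref{prop:temp_coder} this is equivalent to $d'(1)=F(d(1))$ together with equation~\eqref{eq:something_something_dark_side} for every $n\geq 1$. The first condition is automatic, since $d(1)=0=d'(1)$ by Proposition~\ref{prop:(A,m)==d^2} and $F$ is linear. For the second, precompose \eqref{eq:something_something_dark_side} with $s^{\otimes n}$ and postcompose with $s^{-1}$; repeatedly applying the Koszul sign lemma rewrites $b_s$, $b'_r$ and $F_{i_j}$ in terms of $m_s$, $m'_r$ and $f_{i_j}$, and the accumulated signs collapse to $(-1)^{rs+t}$ on the left and $(-1)^{l(i_1,\dots,i_r)}$ on the right, turning the identity into precisely the defining equation~\eqref{eq:morphism} of an $A_\infty$-morphism. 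Hence $F$ is a dg-coalgebra morphism of degree $0$.

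\emph{From $F$ to $f$, and bijectivity.} Conversely, a dg-coalgebra morphism $F\colon(T^c(sA),d)\to(T^c(sA'),d')$ of degree $0$ is in particular a coaugmented coalgebra morphism, so by Proposition~\ref{prop:universal_property} it is determined by its components $F_n:=(\pi'\circ F)|_{(sA)^{\otimes n}}$, each of degree $0$. Define $f_n:=(-1)^{n(n-1)/2}\,s^{-1}\circ F_n\circ s^{\otimes n}$; then $|f_n|=n-1$, and this inverts the previous assignment on components. Feeding $d'F=Fd$ into Proposition~\ref{prop:temp_coder} again produces \eqref{eq:something_something_dark_side} for all $n\geq 1$, which desuspends to \eqref{eq:morphism} exactly as above, so $f=(f_n)_{n\geq 1}$ is an $A_\infty$-morphism. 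Because $f\mapsto F$ and $F\mapsto f$ act as the mutually inverse (de)suspension bijections $f_n\leftrightarrow F_n$ on components, and both the lift of $\varphi$ and the components of $F$ are unique by the universal property, the two constructions are inverse to each other, giving the claimed one-to-one correspondence.

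\emph{The main obstacle.} Everything but the sign comparison is bookkeeping already packaged in the cited propositions. The genuine content is verifying that the sign-free identity~\eqref{eq:something_something_dark_side}, an equation among the degree-$(-1)$ maps $b_s$, $b'_r$, $F_{i_j}$, becomes \eqref{eq:morphism} with \emph{exactly} the exponents $rs+t$ and $l(i_1,\dots,i_r)$ from \eqref{eq:A_infty_signs} after desuspension; carefully sequencing the iterated applications of the Koszul sign lemma so that these exponents drop out (and that the normalising factors $(-1)^{n(n-1)/2}$ on source and target cancel) is the computational heart, of the same flavour as the sum manipulations carried out in Appendix~\ref{sec:appendix_b}.
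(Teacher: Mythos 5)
Your proposal is correct and follows essentially the same route as the paper's proof: lift $F_n = \pm\, s f_n (s^{-1})^{\otimes n}$ to a coalgebra morphism via Proposition~\ref{prop:universal_property}, reduce commutation with the differentials to equation~\eqref{eq:something_something_dark_side} via Proposition~\ref{prop:temp_coder}, and desuspend to recover~\eqref{eq:morphism} with the signs $(-1)^{rs+t}$ and $(-1)^{l(i_1,\dots,i_r)}$. The only difference is presentational: you spell out the bijectivity argument (mutually inverse component-wise assignments plus uniqueness of lifts) slightly more explicitly than the paper, which dispatches it with ``the converse is proved by reversing these steps.''
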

\begin{proof}
Assume $f:(A,m)\rightarrow (A',m')$ is a morphism of $A_\infty$-algebras. As before set $b_n:= sm_n(s^{-1})^{\otimes n}$ and $b_n':= sm_n'(s^{-1})^{\otimes n}$. From Proposition \ref{prop:(A,m)==d^2} we know that $\sum_n b_n$, $\sum_nb_n'$ extend to coderivations $d, d'$ on $T^c(sA)$, $T^c(sA')$, respectively. Set $F_n := sf_n(s^{-1})^{\otimes n}$. From Proposition \ref{prop:universal_property} we know that $\sum_n F_n$ extends to a unique coalgebra morphism $F:T^c(sA)\rightarrow T^c(sA')$. It remains to show that $F$ commutes with the differentials. We use Proposition \ref{prop:temp_coder}: Since $F(1)=1$ and $d(1)=d'(1)=0$ it suffices to prove equation \eqref{eq:something_something_dark_side}:
\begin{align*}
	&\sum_{r+j+t=n} F_{r+t+1}(\mathrm{id}^{\otimes r}\otimes b_j\otimes \mathrm{id}^{\otimes t}) \\
	&= s\sum_{r+j+t=n} f_{r+t+1}(s^{-1})^{\otimes r+t+1}(\mathrm{id}^{\otimes r}\otimes b_j\otimes \mathrm{id}^{\otimes t})\\
	&=s\sum_{r+j+t=n} (-1)^{t} f_{r+t+1}((s^{-1})^{\otimes r}\otimes \underbrace{s^{-1} b_j}_{= m_j(s^{-1})^{\otimes j}} \otimes (s^{-1})^{\otimes t})\\
	&=s\sum_{r+j+t=n} (-1)^{rj+t} f_{r+t+1}(\mathrm{id}^{\otimes r}\otimes m_j \otimes \mathrm{id}^{\otimes t}) (s^{-1})^{\otimes n}\\
	&=s\sum_{i_1+...+i_r= n} (-1)^l m_r(f_{i_1}\otimes ... \otimes f_{i_r})(s^{-1})^{\otimes n}\\
	&= ...= \sum_{i_1+...+i_r= n} b_r'(F_{i_1}\otimes ... \otimes F_{i_r}).
\end{align*}
The converse is proved by reversing these steps.
\end{proof}

Now we are finally able to prove that equation \eqref{eq:a_infty_composition} is well-defined:
\begin{prop} \label{prop:composition}
Composition of $A_\infty$-morphims is well-defined.
\end{prop}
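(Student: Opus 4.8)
The plan is to deduce the statement from the dictionary between $A_\infty$-morphisms and morphisms of dg-coalgebras set up in Corollary \ref{cor:A_infty_morphisms=F}, where composition is manifestly unproblematic, and then to check that transporting the composite back through this dictionary reproduces formula \eqref{eq:a_infty_composition}.

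First I would take two composable $A_\infty$-morphisms $f\colon(A,m)\to(A',m')$ and $f'\colon(A',m')\to(A'',m'')$ and pass to the corresponding degree-$0$ morphisms of dg-coalgebras $F\colon(T^c(sA),d)\to(T^c(sA'),d')$ and $F'\colon(T^c(sA'),d')\to(T^c(sA''),d'')$ provided by Corollary \ref{cor:A_infty_morphisms=F}. A composite of coaugmented coalgebra morphisms is again a coaugmented coalgebra morphism, so $F'\circ F$ is a degree-$0$ coalgebra morphism $T^c(sA)\to T^c(sA'')$, and it commutes with the differentials because $Fd=d'F$ and $F'd'=d''F'$ give $(F'F)d=F'(Fd)=F'(d'F)=(F'd')F=(d''F')F=d''(F'F)$. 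Hence $F'\circ F$ is a morphism of dg-coalgebras, and by the bijection of Corollary \ref{cor:A_infty_morphisms=F} it corresponds to a unique $A_\infty$-morphism $g\colon(A,m)\to(A'',m'')$.

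It then remains to identify this $g$ with \eqref{eq:a_infty_composition}. By construction $g_n=(s^{-1})\,G_n\,s^{\otimes n}$ where $\sum_n G_n=\pi''\circ(F'\circ F)$, with $\pi''$ the projection of $T^c(sA'')$ onto $sA''$. Using the explicit expansions $F=\sum_n\sum_{i_1+\cdots+i_r=n}F_{i_1}\otimes\cdots\otimes F_{i_r}$ and $\pi''\circ F'=\sum_r F'_r$ obtained in the proof of Proposition \ref{prop:universal_property}, the restriction of $\pi''\circ(F'\circ F)$ to $(sA)^{\otimes n}$ equals $\sum_{i_1+\cdots+i_r=n}F'_r(F_{i_1}\otimes\cdots\otimes F_{i_r})$. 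Substituting $F_k=sf_k(s^{-1})^{\otimes k}$ and $F'_r=sf'_r(s^{-1})^{\otimes r}$ and collecting the suspension signs via the Koszul rule — this is precisely the sign manipulation carried out, in the opposite direction, inside the proof of Corollary \ref{cor:A_infty_morphisms=F} — yields $G_n=s\bigl(\sum_{i_1+\cdots+i_r=n}(-1)^{l(i_1,\dots,i_r)}f'_r(f_{i_1}\otimes\cdots\otimes f_{i_r})\bigr)(s^{-1})^{\otimes n}$, so that $g_n=\sum_{i_1+\cdots+i_r=n}(-1)^{l(i_1,\dots,i_r)}f'_r(f_{i_1}\otimes\cdots\otimes f_{i_r})=(f'\circ f)_n$. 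Since $g$ is an $A_\infty$-morphism, this shows that \eqref{eq:a_infty_composition} defines one.

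The only genuine work is this last step: confirming that moving the factors $(s^{-1})^{\otimes k}$ past the maps $f'_r$ and $f_{i_j}$, all of nonzero degree, produces exactly the sign $l(i_1,\dots,i_r)$ of \eqref{eq:A_infty_signs}. I expect this to be the main obstacle, but it is entirely mechanical and is essentially already contained in the computation appearing in the proof of Corollary \ref{cor:A_infty_morphisms=F}, so no new idea is required.
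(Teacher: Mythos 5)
Your proposal is correct and is essentially the paper's own proof of Proposition \ref{prop:composition}: pass to the dg-coalgebra morphisms $F$, $F'$ via Corollary \ref{cor:A_infty_morphisms=F}, compose there (where well-definedness is automatic), and translate the composite back, checking that the Koszul signs reproduce $(-1)^{l(i_1,\dots,i_r)}$. The one bookkeeping point to fix is that the back-translation is $g_n = (-1)^{n(n-1)/2}\, s^{-1}(F'\circ F)_n\, s^{\otimes n}$ rather than $s^{-1}(F'\circ F)_n\, s^{\otimes n}$; the prefactor comes from $(s^{-1})^{\otimes n}s^{\otimes n} = (-1)^{n(n-1)/2}\,\mathrm{id}^{\otimes n}$ and cancels against the same factor produced when the suspensions are pushed through $f'_r(f_{i_1}\otimes\cdots\otimes f_{i_r})$, exactly as in the paper's displayed computation.
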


\begin{proof}
Let $f:(A,m) \rightarrow (A',m')$, $f':(A',m')\rightarrow (A'',m'')$ be two $A_\infty$-morphisms. Denote by $F, F'$ the corresponding morphisms of differential graded coalgebras, then $F'\circ F$ is again a morphism of differential graded coalgebras, and hence defines a unique $A_\infty$-morphism $g:(A,m)\rightarrow (A'',m'')$ given by
\begin{align*}
	g_n &:= (-1)^{\frac{n(n-1)}{2}} s ^{-1} (F'\circ F)_n s^{\otimes n} \\
	&= s ^{-1}\big[\sum_{i_1+...+i_r = n} F'_r(F_{i_1}\otimes...\otimes F_{i_r} ) \big]s^{\otimes n}\\
	&= (-1)^{\frac{n(n-1)}{2}} \big[\sum_{i_1+...+i_r = n} f'_r(s^{-1})^{\otimes r} (F_{i_1}\otimes...\otimes F_{i_r} ) \big]s^{\otimes n}\\
	&= (-1)^{\frac{n(n-1)}{2}} \big[\sum_{i_1+...+i_r = n} f'_r(s^{-1}F_{i_1}\otimes...\otimes s^{-1}F_{i_r} ) \big]s^{\otimes n}\\
	&= (-1)^{\frac{n(n-1)}{2}} \big[\sum_{i_1+...+i_r = n} f'_r(f_{i_1}(s^{-1})^{\otimes i_1}\otimes...\otimes f_{i_r}(s^{-1})^{\otimes i_r} ) \big]s^{\otimes n}\\
	&= (-1)^{\frac{n(n-1)}{2}} \big[\sum_{i_1+...+i_r = n} (-1)^l f'_r(f_{i_1}\otimes...\otimes f_{i_r} ) \big](s^{-1})^{\otimes n}s^{\otimes n}\\
	&=\sum_{i_1+...+i_r = n} (-1)^l f'_r(f_{i_1}\otimes...\otimes f_{i_r} )\\
	&= (f'\circ f)_n.
\end{align*}
We conclude that $f'\circ f	$ is a morphism of $A_\infty$-algebras.
\end{proof}

Also we are able to prove Proposition \ref{prop:a_infty_isomorphism}.
\begin{prop} \label{prop:inverse}
    A morphism of $A_\infty$-algebras $f:(A,m)\rightarrow (A',m')$ is an isomorphism if and only if $f_1$ is an isomorphism.
\end{prop}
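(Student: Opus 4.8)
The plan is to translate the statement into the coalgebra picture of Section~\ref{sec:quasi-cofree} and reduce it to the analogous fact about quasi-cofree coalgebras. Write $F:(T^c(sA),d)\to(T^c(sA'),d')$ for the morphism of dg-coalgebras attached to $f$ by Corollary~\ref{cor:A_infty_morphisms=F}; its first component is $F_1=s\circ f_1\circ s^{-1}$, so $F_1$ is invertible exactly when $f_1$ is. Since the correspondence of Corollary~\ref{cor:A_infty_morphisms=F} is compatible with composition and identities (this is exactly Proposition~\ref{prop:composition} together with the definition of $\mathrm{id}_A$), $f$ is an isomorphism of $A_\infty$-algebras if and only if $F$ is an isomorphism of dg-coalgebras. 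It therefore suffices to prove that a degree-$0$ coaugmented coalgebra morphism $F:T^c(V)\to T^c(V')$ between conilpotent coalgebras is invertible as a coalgebra morphism if and only if $F_1:=\pi'\circ F|_{V}:V\to V'$ is invertible.

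The ``only if'' direction is immediate and needs no coalgebras: if $g$ is an $A_\infty$-inverse of $f$, then reading off the length-one part of the composition formula \eqref{eq:a_infty_composition} from $g\circ f=\mathrm{id}_A$ and $f\circ g=\mathrm{id}_{A'}$ gives $g_1 f_1=\mathrm{id}$ and $f_1 g_1=\mathrm{id}$, the Koszul sign \eqref{eq:A_infty_signs} being trivial for $r=1$.

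For the ``if'' direction, assume $F_1$ is invertible. I would build a candidate inverse $G:T^c(V')\to T^c(V)$ by the universal property of the tensor coalgebra (Proposition~\ref{prop:universal_property}): it is enough to prescribe its components $G_n:(V')^{\otimes n}\to V$. Set $G_1:=F_1^{-1}$, and for $n>1$ define $G_n$ by solving for it in the length-one summand of
\[
\pi'\circ(F\circ G)\big|_{(V')^{\otimes n}}=\sum_{i_1+\dots+i_r=n}F_r(G_{i_1}\otimes\dots\otimes G_{i_r}),
\]
that is, $G_n:=-F_1^{-1}\big(\sum_{\substack{i_1+\dots+i_r=n\\ r>1}}F_r(G_{i_1}\otimes\dots\otimes G_{i_r})\big)$; transported through the suspension, this is precisely the recursion \eqref{eq:inverse}. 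With this choice $\pi'\circ(F\circ G)$ and $\pi'\circ\mathrm{id}_{T^c(V')}$ agree on every $(V')^{\otimes n}$ and on $1$, so, since $F\circ G$ and $\mathrm{id}$ are both coaugmented coalgebra morphisms, the uniqueness clause of Proposition~\ref{prop:universal_property} forces $F\circ G=\mathrm{id}_{T^c(V')}$. Thus $F$ has a right inverse $G$; as $G_1=F_1^{-1}$ is again invertible, the same construction yields a right inverse $H$ of $G$, and then $F=F\circ(G\circ H)=(F\circ G)\circ H=H$, whence $G\circ F=G\circ H=\mathrm{id}$. So $G$ is a two-sided inverse of $F$. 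Finally, $d'\circ F=F\circ d$ together with $G=F^{-1}$ gives $G\circ d'=d\circ G$, so $G$ is automatically a morphism of dg-coalgebras; via Corollary~\ref{cor:A_infty_morphisms=F} it corresponds to an $A_\infty$-morphism $g:(A',m')\to(A,m)$ with components as in \eqref{eq:inverse}, and $g$ is a two-sided inverse of $f$. In particular this verifies simultaneously that \eqref{eq:inverse} defines a genuine $A_\infty$-morphism and that it is also a \emph{left} inverse of $f$, which is exactly what was left open after \eqref{eq:inverse}.

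The main obstacle is not a computation but the bookkeeping needed to upgrade a one-sided inverse to a two-sided one: one must argue carefully that the inductively constructed $G$ is \emph{forced} to equal $F^{-1}$ (this is what the uniqueness part of Proposition~\ref{prop:universal_property} provides, once one observes that $F\circ G$ and $\mathrm{id}$ have the same corestriction to $V'$), and then run the ``right inverse of a right inverse'' trick, which is legitimate because composition of coalgebra morphisms is associative. A secondary point worth stating cleanly is that compatibility of $G$ with the differentials costs nothing once $G=F^{-1}$, so no further $A_\infty$-relation has to be checked by hand; the genuinely hard work was already absorbed into Corollary~\ref{cor:A_infty_morphisms=F} and Proposition~\ref{prop:composition}.
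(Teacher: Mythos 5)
Your proposal is correct and follows essentially the same route as the paper: pass to the coalgebra picture via Corollary \ref{cor:A_infty_morphisms=F}, build a right inverse $G$ of $F$ by the recursion \eqref{eq:inverse} and the universal property of $T^c(V)$, upgrade it to a two-sided inverse, and then get compatibility with the differentials for free from $Gd' = Gd'FG = GFdG = dG$. The only (harmless) deviation is how two-sidedness is obtained — you take a right inverse of the right inverse, whereas the paper constructs an explicit left inverse $G'$ by a dual recursion and identifies $G = G'$ — and your explicit minus sign in $G_n$ is in fact the correct one, which \eqref{eq:inverse} as printed omits.
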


\begin{proof}
	Let $f:(A,m)\rightarrow (A',m')$ be an $A_\infty$-algebra morphism such that $f_1$ is invertible. Let $F:(T^c(sA),d)\rightarrow (T^c(sA'),d')$ be the corresponding morphism of dg-coalgebras. Let $g_n:A'^{\otimes n} \rightarrow A$ be the maps defined as in \eqref{eq:inverse}
	\[
	g_n := f_1^{-1}\Big(\sum_{\substack{n=i_1+...+i_r\\r>1}} (-1)^{l} f_r(g_{i_1}\otimes ... \otimes g_{i_r})\Big).
	\]
	We show that this is in fact an $A_\infty$-algebra morphism: The map
	\[
	\sum _n s g_n (s^{-1})^{\otimes n}: T^c(sA)\rightarrow sA'
	\]
	lifts to an coalgebra morphism $G: T^c(sA)\rightarrow T^c(sA')$. We know that the coalgebra morphism $F \circ G$ corresponds to the linear map $\sum_n s^{-1} (f\circ g)_n s^{\otimes n}$, and hence we know that $G$ is a right-inverse of $F$. Conversely, define $g_1' := f_1^{-1}$, and then inductively for all $n$
	\[
	g_n' = \sum_{\substack{i_1+...+i_r = n\\r<n}} (-1)^l g_r' (f_{i_1}\otimes ... \otimes f_{i_r})((f_1)^{-1}\otimes ...\otimes (f_1)^{-1}).
	\]
	Then the map $\sum_n s^{-1} g_n' s^{\otimes n}: T^c(sA') \rightarrow sA$ defines a coalgebra morphism $G' :T^c(sA') \rightarrow T^c(sA)$ which is by construction a left-inverse of $F$. And hence $G$ and $G'$ are equal, that is 
	\[
	G = \mathrm{id}_{T^c(sA)} \circ G = (G' \circ F) \circ G = G' \circ (F \circ G) = G' \circ \mathrm{id}_{T^c(sA')} = G'.
	\]
	It remains to check that $G$ is a morphism of differential coalgebras, and indeed we compute
	\[
	G d' = Gd'FG = GFdG = dG.
	\]
	We conclude that $g$ is a morphism of $A_\infty$-algebras. 
	\end{proof}
	
\section{Perturbation Lemma} \label{sec:pertub_lemma}

One goal in this section is to prove the hard direction (1.\ implies 3.) of Theorem \ref{thm:formality_def_equivalenz} via the perturbation lemma.
\subsection{Deformation retracts}
Recall the following basic definitions:
\begin{definition}
	A \textit{chain complex (of vector spaces)} is a graded vector space $A$ together with a linear map $d:A\rightarrow A$ of degree $-1$ such that $d^2=0$. $d$ is called the \textit{differential}.
\end{definition}
\begin{definition}
	 A \textit{chain map} $f:A\rightarrow B$ is a linear map of degree $0$ which commutes with the differentials.
\end{definition}
\begin{definition}
	Two chain maps $f,g :A\rightarrow B$ are called \textit{chain homotopic} if there exists a linear map $h:A\rightarrow B$ of degree $+1$ such that
	\[
	 f - g = d_B \circ h + h \circ d_A.
	\]
	$h$ is called a \textit{chain homotopy from $f$ to $g$} and we write $f\simeq g$.
\end{definition}
 Consider the following special case of a homotopy equivalences.
\begin{definition}
    Let $A,B$ be two chain complexes. $B$ is called a \textit{deformation retract} of $A$ if there exist chain maps $i:B\rightarrow A$ and $p:A \rightarrow B$ such that $\mathrm{id}_B= p\circ i$ and $\mathrm{id}_A \simeq i\circ p$.
\end{definition}
Let $B$ be a deformation retract of $A$. Let $h:A\rightarrow A$ be a chain homotopy from $\mathrm{id}_A$ to $i\circ p$, i.e.\
\[
\mathrm{id}_A - i\circ p = d_B \circ h + h \circ d_A.
\]
By slight abuse of notation we sometimes call the collection of chain complexes and maps
\[
\begin{tikzcd}
		\arrow[loop left, "h"] (A,d_A) \arrow[r, shift left, "p"] & \arrow[l, shift left, "i"](B,d_B)
\end{tikzcd}
\]
a deformation retract. We are in particular interested in the case where $h$ satisfies the additional conditions  
\begin{equation} \label{eq:additiona_conditions}
    h^2 = 0, \ p\circ h = 0\ \mathrm{and}\ h\circ i = 0.
\end{equation}
It is easy to prove that any homotopy can be deformed such that these additional conditions are satisfied.
\begin{prop} 	
    Let $A$ be a chain complex. The homology $H_*(A)$, equipped with the zero differential, is a deformation retract of $A$.
\end{prop}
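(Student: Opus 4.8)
The plan is to exploit the fact that we work over a field $k$, so that every subspace of a vector space admits a complement. First I would fix, in each degree $n$, the cycles $Z_n := \ker(d\colon A_n\to A_{n-1})$ and the boundaries $B_n := \mathrm{im}(d\colon A_{n+1}\to A_n)$. Since $B_n\subseteq Z_n$, I choose a complement $H_n$ of $B_n$ inside $Z_n$ and a complement $L_n$ of $Z_n$ inside $A_n$; this yields a direct sum decomposition $A_n = B_n\oplus H_n\oplus L_n$ in each degree. The restriction $d|_{L_n}\colon L_n\to A_{n-1}$ is injective, since its kernel is $L_n\cap Z_n = 0$, and its image is exactly $B_{n-1}$, so it is an isomorphism $L_n\xrightarrow{\cong}B_{n-1}$. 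Under the identification $H_n\cong H_n(A)$ coming from $Z_n = B_n\oplus H_n$, I define $i\colon H_*(A)\to A$ to be the inclusion of the summand $\bigoplus_n H_n$, and $p\colon A\to H_*(A)$ to be the projection onto that summand. Both are chain maps: the target carries the zero differential, and $p\circ d_A$ lands in $p(B_{*})=0$ while $d_A\circ i$ lands in $d_A(H_*)=0$ as $H_n\subseteq Z_n$. Also $p\circ i = \mathrm{id}_{H_*(A)}$ is immediate.

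It remains to produce the homotopy. I would define $h\colon A\to A$ of degree $+1$ by letting $h|_{B_n}$ be the inverse isomorphism $(d|_{L_{n+1}})^{-1}\colon B_n\to L_{n+1}$ and setting $h$ equal to zero on $H_n\oplus L_n$. Then for $a = b+\eta+\ell\in B_n\oplus H_n\oplus L_n$ one computes $i p(a) = \eta$, then $d h(a) = d(h(b)) = b$ (using that $\eta$ and $\ell$ are killed by $h$), and $h d(a) = h(d\ell) = \ell$ (using that $b$ and $\eta$ are cycles and that $h$ inverts $d|_{L_n}$ on $B_{n-1}$). Adding these gives $dh(a)+hd(a) = b+\ell = a - ip(a)$, which is exactly the required identity $\mathrm{id}_A - i\circ p = d\circ h + h\circ d$. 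Hence $H_*(A)$ with the zero differential is a deformation retract of $A$.

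Finally, I would observe that this particular $h$ automatically satisfies the extra conditions \eqref{eq:additiona_conditions}: the image of $h$ lies in $\bigoplus_n L_n$, on which both $h$ and $p$ vanish, giving $h^2 = 0$ and $p\circ h = 0$; and the image of $i$ lies in $\bigoplus_n H_n$, on which $h$ vanishes, giving $h\circ i = 0$. I do not expect any genuine obstacle here: the only nontrivial input is the splitting of short exact sequences of vector spaces, and once the decomposition $A_n = B_n\oplus H_n\oplus L_n$ is fixed everything reduces to a direct verification. The single point requiring care is the bookkeeping of degrees, namely that $d$ restricts to an isomorphism $L_{n+1}\to B_n$, so that $h$ indeed raises degree by one.
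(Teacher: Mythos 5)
Your proof is correct and is essentially the paper's own argument: the paper likewise splits each $A_n$ (via choices of complements over the field $k$) into boundaries, a copy of the homology, and a complement of the cycles, and defines $h$ as the inverse of $d$ on the boundary summand and zero elsewhere. The verification of $\mathrm{id}-ip = dh+hd$ and of the side conditions \eqref{eq:additiona_conditions} proceeds exactly as you describe.
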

\begin{proof}
Let $A = \bigoplus_{n\in\mathbb{Z}} A_n$ and recall that a map $d$ of degree $-1$ is equivalent to a family of linear maps 
\[
d_n : A_n \rightarrow A_{n-1}.
\]
Recall the following definitions 
\[
H_n(A) := \ker{d_n}/\mathrm{im}\ {d_{n+1}},\quad H_*(A) := \bigoplus_{n\in \mathbb{Z}} H_n(A).
\]
Let $A_n^{E}\subset A$ be the subspace of exact forms. By Zorn's Lemma there exists a subspace $A_n^C$ such that
\[
\ker{d_n} = A_n^{E}\oplus A_n^C.
\]
Again by Zorn's Lemma there exists a subspace $A_n^\perp$ such that 
\[
A_n= A_n^{E}\oplus A_n^C\oplus A_n^\perp.
\]
Note that $A_n^\perp$ can be realised as the image of $d_n$, and $H_n(A)$ can be identified with $A_n^C$. Thus $A_n$ can be decomposed as 
\[
A_n= A_n^{E}\oplus H_n(A) \oplus A_{n-1}^{E}.
\]
Let 
\[
p_n :A_n \rightarrow H_n(A), \quad i_n : H_n(A) \rightarrow A_n
\]
be the projection and inclusion maps. Set $h_n : A_n \rightarrow A_{n+1}$ to be the map that identifies $A_n^{E}\subset A_n$ with its copy in $A_{n+1}$. By construction these maps satisfy the following equations:
\begin{align*}
    p_n i_n &= \mathrm{id},\\
    \mathrm{id}-i_n p_n &= h_{n-1}d_n + d_{n+1}h_n,\\
    h_n i_n = 0,\ p_{n+1} h_n&= 0,\ \mathrm{and}\ h_{n+1}h_n = 0.
\end{align*}
Thus, adding up for all $n$ gives a deformation retract:
\[
p:= \bigoplus_{n\in \mathbb{Z}} p_n :A\rightarrow H_*(A), \quad i:= \bigoplus_{n\in \mathbb{Z}} i_n : H_*(A)\rightarrow A
\]
\[
h:= \bigoplus_{n\in \mathbb{Z}} h_n : A \rightarrow A.
\]
Moreover, the additional conditions \eqref{eq:additiona_conditions} are satisfied. 
\end{proof}

Let $C, D$ be two dg-coalgebras. Note that dg-coalgebras are in particular chain complexes. Assume that 
\[
\begin{tikzcd}
    \arrow[loop left, "h"] (C,d_C) \arrow[r, shift left,"p"] & \arrow[l, shift left, "i"] (D, d_D)
\end{tikzcd}
\]
is a deformation retract. If $i$ and $p$ are morphisms of dg-coalgebras we say that the deformation retract is a \textit{deformation retract of dg-coalgebras}.

\subsection{Perturbation Lemma for Coalgebras}
Fix a dg-algebra $(A,d,\nu)$. Since dg-algebras are chain complexes, there exists a deformation retract
\[
\begin{tikzcd}
		\arrow[loop left, "h"] (A,d) \arrow[r, shift left, "p"] & \arrow[l, shift left, "i"](H_*(A),0)
\end{tikzcd}
\]
such that the additional conditions \eqref{eq:additiona_conditions} are satisfied. Note that $i$ and $p$ are only chain maps -- not morphisms of dg-algebras. Let $T^c(sA)$, $T^c(sH_*(A))$ denote the quasi-cofree coalgebras over $sA$, $sH_*(A)$, respectively. $i$ and $p$ induce coalgebra maps as follows: Define a map $T^c(sA) \rightarrow sH_*(A)$ equal to $s\circ p \circ s^{-1}$ on $sA$ and zero else. This map lifts to a coalgebra map of degree $0$
\[
P:T^c(sA) \rightarrow T^c(sH_*(A)).
\]
Similarly, $s\circ i\circ s^{-1}$ lifts to a coalgebra map of degree $0$
\[
I: T^c(sH_*(A))\rightarrow T^c(sA).
\]
Let us forget the product on $A$ for a moment and think of $(A,d)$ simply as a chain complex. In particular, the chain complex $(A,d)$ is an $A_\infty$-algebra, and thus by Proposition \ref{prop:(A,m)==d^2} there exists a square zero coderivation $\tilde{d}$ on $T^c(sA)$. $(T^c(sA), \tilde{d})$ is again a chain complex. We give $T^c(H_*(A))$ the zero differential and claim that 
\begin{equation}	\label{eq:tikz_1}
\begin{tikzcd} 
		(T^c(sA),\tilde{d}) \arrow[r, shift left, "P"] & \arrow[l, shift left, "I"](T^c(H_*(sA)),0).
\end{tikzcd}
\end{equation}
are cochain maps. Note that since $P$ is a coalgebra morphism of degree $0$ and $\tilde{d}$ is a coderivation $P \tilde{d}$ is a coderivation along $P$. So $P\tilde{d}$ is uniquely determined by it's first component $\pi P \tilde{d}$, which is zero because $p d = 0$. Similarly $\tilde{d}I$ is a coderivation along $I$ and the first component vanishes because $d i = 0$. Our goal is to make \eqref{eq:tikz_1} into a deformation retract. So we need a homotopy on $T^c(sA)$: We define a family of maps\footnote{These are not called $h_n$ because the name is reserved for later.} $\tilde{h}_n : sA^{\otimes n} \rightarrow sA^{\otimes n}$ by $\tilde{h}_1 := s h s^{-1}$ and for $n\geq 2$
\[
\tilde{h}_n := shs^{-1}\otimes sips^{-1}\otimes ...\otimes sips^{-1}+ ... + \mathrm{id}\otimes ...\otimes \mathrm{id} \otimes shs^{-1}.
\]
Set $H := \sum \tilde{h}_n : T^c(sA)\rightarrow T^c(sA)$, where $\tilde{h}_0 := 0: k \rightarrow k$. $H$ is well-defined by conilpotency. Moreover, $H$ is a morphism of graded vector spaces of degree $+1$. Let us prove that
\[
\begin{tikzcd}
		\arrow[loop left, "H"] (T^c(sA),\tilde{d}) \arrow[r, shift left, "P"] & (T^c(sH_*(A)),0) \arrow[l, shift left, "I"]
\end{tikzcd}
\]
is a deformation retract of dg-coalgebras. We already know that $P$ and $I$ are coalgebra maps that commute with the differentials. It remains to prove the following:
\begin{enumerate}
	\item $P I = \mathrm{id}$.
	\item $ \mathrm{id}-IP = \tilde{d}H + H\tilde{d}$.
	\item $P H =0$, $H I = 0$, $H^2 = 0$.
\end{enumerate}
1.\ follows from $pi = \mathrm{id}$. Before getting started on the proof of 2.\ and 3.\ we need to extend our definition of coderivations along maps.
\begin{definition}
	Let $f,f':C\rightarrow C'$ be two morphisms of coalgebras. A linear map $d:C\rightarrow C'$ is called a \textit{coderivation along} $(f,f')$ if 
	\[
	\Delta'd = (f\otimes d + d\otimes f') \Delta.
	\]
\end{definition}
Denote by $\mathrm{Coder}_{(f,f')}$ the space of coderivations along $(f,f')$. As expected, Proposition \ref{prop:coderivations_along_f} generalizes to the new definition of coderivations along two maps:
\begin{prop} \label{prop:coderivations_along_(f,f')}
	Let $f,f':T^c(V)\rightarrow T^c(V')$ be two coaugmented coalgebra morphism between cofree coalgebras. Composition with the projection map induces a bijection
	\[
	\mathrm{Coder}_{(f,f')}(T^c(V),T^c(V')) \rightarrow \mathrm{Hom}(T^c(V), V').
	\]
\end{prop}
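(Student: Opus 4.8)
The plan is to mimic the proofs of Proposition \ref{prop:coderivations} and Proposition \ref{prop:coderivations_along_f}, only now bookkeeping two coalgebra morphisms $f$ and $f'$ instead of one. The map in question is $d \mapsto \pi' \circ d$, which is clearly linear; I must show it is injective and surjective.

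For injectivity, suppose $d \in \mathrm{Coder}_{(f,f')}(T^c(V),T^c(V'))$ with $\pi' \circ d = 0$; I claim $d = 0$. First, from Lemma \ref{lem:d_respects_coaugmentation}'s argument (or its analogue — one checks directly that a coderivation along $(f,f')$ still lands in $\overline{T}^c(V')$ since $f,f'$ are coaugmented and hence respect the counit) we get that $d(x)_0 = 0$ for all $x$, and $d(x)_1 = \pi'(d(x)) = 0$ by assumption. Then iterating the defining identity $\Delta' d = (f \otimes d + d \otimes f')\Delta$ gives a formula for $\Delta'_{n-1} d$ in terms of $d$ in a single tensor slot flanked by copies of $f$ and $f'$; composing with $\pi'^{\otimes n}$ and using $\pi' f = \pi' f' = $ (first component of $f$, resp. $f'$) shows $d(x)_n$ is a sum of terms each containing a factor $\pi' d(\,\cdot\,) = 0$. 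Hence $d(x)_n = 0$ for all $n \ge 1$, so $d = 0$. The one subtlety is writing the iterated version of the Leibniz-type identity correctly: inductively $\Delta'_{n-1} d = \sum_{a+1+b = n}(f^{\otimes a} \otimes d \otimes f'^{\otimes b})\Delta_{n-1}$, which one proves by induction on $n$ using that $f, f'$ are coalgebra morphisms.

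For surjectivity, given $b : T^c(V) \to V'$ arbitrary, define $d$ componentwise: set $d(1) := $ (the image under the coaugmented structure, so $d(1)_0 = 0$ and $d(1)_1 = b(1)$), and for $x \in \overline{T}^c(V)$ homogeneous, $x = v_1 \otimes \cdots \otimes v_m$, set
\[
d(x)_n := \sum_{a + 1 + b' = n} (f_{?}^{\otimes a} \otimes b \otimes f'^{\otimes b'})\, \Delta_{n-1}(x),
\]
more precisely $d(x)_n := \sum (\pi' f\text{-blocks}) \otimes b(x_{(i)}) \otimes (\pi' f'\text{-blocks})$ summed over the positions of the ``$b$-slot'' in $\Delta_{n-1}(x)$, exactly paralleling the formula in Proposition \ref{prop:coderivations} but with $\pi$ replaced by $\pi' f$ to the left of the distinguished slot and by $\pi' f'$ to the right. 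As in Proposition \ref{prop:coderivations}, the sum over $n$ is finite: for $n \ge m + 2$ every summand of $\Delta_{n-1}(x)$ has at least two tensor factors equal to $1$, and $\pi'(f(1)) = \pi'(f'(1)) = 0$ forces those terms to vanish (here I use that $f, f'$ are coaugmented so $f(1) = f'(1) = 1$). Then one verifies by a direct computation that this $d$ satisfies $\Delta' d = (f \otimes d + d \otimes f')\Delta$ and that $\pi' \circ d = b$ by construction.

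I expect the main obstacle to be purely notational: keeping the asymmetry straight, since now the tensor factors to the \emph{left} of the distinguished $b$-slot must be contracted via $f$ while those to the \emph{right} must be contracted via $f'$ (in the symmetric case of Proposition \ref{prop:coderivations} both were just $\pi$). The verification of the Leibniz identity for the constructed $d$ is the one calculation that genuinely has to be done, but it is a routine matching of deconcatenation terms on both sides, entirely analogous to the $f = f'$ case already handled in Proposition \ref{prop:coderivations_along_f}. No new idea is needed beyond that proposition; this is essentially a bookkeeping generalization, so I would keep the write-up brief and point to the earlier proofs for the parts that are verbatim the same.
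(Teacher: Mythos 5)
Your proposal is correct and follows exactly the route the paper intends: the paper's own ``proof'' of Proposition~\ref{prop:coderivations_along_(f,f')} is just the remark that it is proved like Proposition~\ref{prop:coderivations}, and your write-up carries out that adaptation faithfully, including the two points that genuinely need checking (the iterated identity $\Delta'_{n-1}d=\sum_{a+1+b=n}(f^{\otimes a}\otimes d\otimes f'^{\otimes b})\Delta_{n-1}$ and the finiteness of $\sum_n d(x)_n$ via $\pi'f(1)=\pi'f'(1)=0$). No gap; your version is in fact more detailed than the paper's.
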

The proof is again similar to the proof of Proposition \ref{prop:coderivations}. With this we can proceed to prove 2.\ and 3..\ We start of by showing that $H$ is a coderivation along $(\mathrm{id}, IP)$. That is we must prove 
\begin{equation} \label{eq:H_a_coderivation}
	\Delta H= (\mathrm{id} \otimes H + H\otimes IP) \Delta.
\end{equation}
On $k$ this is clear. On each $(sA)^{\otimes n}$ the proof is by induction on $n$: For $n=1$, let $x\in sA$ and compute the following
\begin{align*}
	(\mathrm{id} \otimes H + H\otimes IP)\Delta (x)&= (\mathrm{id} \otimes H + H\otimes IP)(1\otimes x +x\otimes 1)\\
	&= 1\otimes h(x)+ h(x)\otimes 1 \\
	&= \Delta H(x).
\end{align*}
Assume equation \eqref{eq:H_a_coderivation} is true on $(sA)^{\otimes n}$ for a fixed $n\geq 1$. To make things more clear denote the tensor product $T^c(sA) \boxtimes T^c(sA)$ with a different symbol. We compute
\begin{align*}
	\bar\Delta \tilde{h}_{n+1} &= \bar\Delta (\tilde{h}_n\otimes sips^{-1} + \mathrm{id}^{\otimes n} \otimes shs^{-1})\\
	&= \sum_{j=1}^{n-1} \mathrm{id}^{\otimes j} \boxtimes \tilde{h}_{n-j}\otimes sips^{-1} + \tilde{h}_j \boxtimes (sips^{-1})^{\otimes n-j} \otimes sips^{-1} \\
	&\qquad +\mathrm{id}^{\otimes j} \boxtimes \mathrm{id}^{\otimes n -j} \otimes shs^{-1} + h_n \boxtimes sips^{-1} + \mathrm{id}^{\otimes n} \boxtimes h \\
	& = \sum_{j=1}^{n-1} \mathrm{id}^{\otimes j} \boxtimes (\tilde{h}_{n-j}\otimes sips^{-1}+\mathrm{id}^{\otimes n -j} \otimes shs^{-1}) + \mathrm{id}^{\otimes n} \boxtimes shs^{-1} \\
	&\qquad + \sum_{j=1}^{n} \tilde{h}_j \boxtimes (sips^{-1})^{\otimes n+1-j}\\
	& = \sum_{j=1}^{n} \mathrm{id}^{\otimes j} \boxtimes (\tilde{h}_{n-j+1}) +\sum_{j=1}^{n} \tilde{h}_j \boxtimes (sips^{-1})^{\otimes n+1-j} \\
	&= (\mathrm{id}\boxtimes H + H\boxtimes IP)\bar{\Delta}.
\end{align*}
Thus $H$ is a coderivation along $(\mathrm{id},IP)$ of degree $+1$. Since $\tilde{d}$ is a coderivation along the identity of degree $-1$ it follows that $\tilde{d}H +H\tilde{d}$ is a coderivation along $(\mathrm{id},IP)$ of degree $0$. Thus $\tilde{d}H +H\tilde{d}$ is uniquely determined by it's first component which is equal to $shds^{-1}+sdhs^{-1}$ on $sA$ and zero else. However, $\mathrm{id}-IP$ is also a coderivation along $(\mathrm{id},IP)$ of degree $0$. Indeed, we compute
\begin{align*}
	\Delta (\mathrm{id}-IP) &= (\mathrm{id}\otimes \mathrm{id}- IP\otimes IP) \Delta\\
	&= (\mathrm{id}\otimes \mathrm{id}-IP\otimes IP +\mathrm{id} \otimes IP - \mathrm{id} \otimes IP+ ) \Delta\\
	&= (\mathrm{id}\otimes (\mathrm{id}-IP) + (\mathrm{id}-IP) \otimes IP) \Delta.
\end{align*}
The first component of $\mathrm{id}-IP$ is equal to the first component of $\tilde{d}H +H\tilde{d}$. Thus by uniqueness they are equal
\begin{equation}\label{eq:deformation_retract_dgca}
    \mathrm{id}-IP = \tilde{d}H +H\tilde{d}.
\end{equation}
Lastly, let us prove 3.: $PH, HI$ are a coderivation along $P, I$, respectively. They both vanish because their first component due to $ph = 0$ and $hi = 0$. $H^2$ is a coderivation along $(\mathrm{id}, IP)$. Indeed, we compute
\begin{align*}
	\Delta H^2 &= (\mathrm{id}\otimes H + H\otimes IP)(\mathrm{id}\otimes H + H\otimes IP)\Delta\\
	&= (\mathrm{id}\otimes H^2 - H\otimes HIP +H\otimes IPH + H^2\otimes IPIP)\Delta\\
	&= (\mathrm{id}\otimes H^2+ H^2\otimes IP)\Delta
\end{align*}
where the last line uses $HI = 0$, $PH = 0$ and $PI = \mathrm{id}$. The first component of $H^2$ vanishes again because of $h^2 = 0.$ This concludes the proof that 
\[
\begin{tikzcd}
	\arrow[loop left, "H"] (T^c(sA),\tilde{d}) \arrow[r, shift left, "P"] & (T^c(sH_*(A)),0) \arrow[l, shift left, "I"]
\end{tikzcd}
\]
is a deformation retract of dg-coalgebras. We now introduce a perturbation to the differential $\tilde{d}$: The dg-algebra $(A,d,\nu)$ is of course also an $A_\infty$-algebra and thus by Proposition \ref{prop:(A,m)==d^2} there exists a differential $D$ on $T^c(sA)$ of degree $-1$. Define
\[
t:= D-\tilde{d}, \mathrm{such}\ \mathrm{that}\ D = \tilde{d}+t.
\]
Since $D,\tilde{d}$ are both differentials so is $t$. $t$ is called the perturbation of the differential $\tilde{d}$. We are now in the setting of the perturbation lemma as in \cite[Lemma $2.1_*$]{HUKA}. The first part of the perturbation lemma is proved in \cite{brown}. The proof in \cite{brown} goes as follows: Define a map 
\[
H_t := \sum_{n\geq 0}(-Ht)^n H.
\]
It is not immediate that this is well-defined. To see this recall that there is a second grading on the space $T^c(sA)$ which we called the \textit{weight}. A homogeneous element 
\[
x_1\otimes ...\otimes x_n \in sA^{\otimes n}
\]
has degree $|x_1| + ... +|x_n|$ and weight $n$. Now notice that the map $t$ reduces the weight by $1$ and $H$ does not affect the weight. Thus for any element $x\in T^c(sA)$ there exists an $n$ such that $t_m (x) =0 $ for all $m\geq n$, and hence $H_t$ is well-defined. The following is immediate
\begin{equation}\label{temp}
	 H^2 = 0,\ PH = 0,\ HI=0 \Rightarrow H_t^2 = 0,\ PH_t = 0,\ H_t I=0.
\end{equation}
The following Lemma is also stated in \cite{brown}.
\begin{lemma}
\[
H_t D H =HDH_t = H,\ H_tD H_t = H_t
\]
\end{lemma}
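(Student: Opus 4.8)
The plan is to unravel the definition $H_t := \sum_{n\geq 0}(-Ht)^n H$ and use the three relations $H^2=0$, $PH=0$, $HI=0$ together with the fact that $D = \tilde d + t$ and $\tilde d H + H\tilde d = \mathrm{id} - IP$. The computation is entirely formal, so I will not worry about convergence — that is handled by the weight-filtration argument given just above the lemma, and the same argument shows all the series below are locally finite.

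\textbf{Step 1: Compute $HDH$.} Write $HDH = H\tilde d H + HtH$. From \eqref{eq:deformation_retract_dgca} we have $\tilde d H = \mathrm{id} - IP - H\tilde d$, so $H\tilde d H = H(\mathrm{id}-IP)H - H^2\tilde d H = H^2 - HIPH$ (using $H\tilde d = \tilde d H$ is \emph{not} needed; just substitute). Now $H^2 = 0$ and $HI = 0$ both kill these terms, so $H\tilde d H = 0$ and therefore $HDH = HtH$. I record this: $HDH = HtH$.

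\textbf{Step 2: Compute $H_t D H$.} Expand
\[
H_t D H = \sum_{n\geq 0}(-Ht)^n H D H = \sum_{n\geq 0}(-Ht)^n (HtH) = \sum_{n\geq 0}(-1)^n (Ht)^{n+1} H.
\]
On the other hand $H = (Ht)^0 H$, and comparing with $H_t = \sum_{n\geq 0}(-Ht)^n H = H - \sum_{n\geq 1}(-1)^{n-1}(Ht)^n H = H - \sum_{n\geq 0}(-1)^n(Ht)^{n+1}H$, we read off $H_t D H = H - H_t$. Hmm — that is not quite the claim; let me re-index more carefully. We have $H_t = H + \sum_{n\geq 1}(-Ht)^n H$ and $H_t D H = \sum_{n\geq 0}(-1)^n(Ht)^{n+1}H = -\sum_{m\geq 1}(-Ht)^m H = -(H_t - H) = H - H_t$. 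So in fact the correct bookkeeping gives $H_t D H = H - H_t$, which contradicts the statement; therefore I must have mis-signed Step 1. The resolution: one should instead use $H\tilde d H = 0$ to get $HDH = HtH$ \emph{but} then also observe $D H_t = DH + Dt(\cdots)$ interacts, i.e.\ the clean identity to prove first is $H_t D = \mathrm{id} - IP + \text{(lower)}$ style. The honest approach is: first establish $H_t \tilde d = \mathrm{id} - IP - \tilde d H_t - H_t t$ type relations, or more cleanly, prove $H_t D = H_t(\tilde d + t)$ and use the telescoping $H_t t = H_t(-Ht)^{-1}$-style cancellation. The cleanest route, which I will take, is:

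\textbf{Step 1$'$ (the real plan): Telescoping identity.} Observe $H_t = H - Ht H_t$ and $H_t = H - H_t t H$ (both follow by splitting off the first resp.\ last factor of the geometric series). Then compute $H_t D H = H_t(\tilde d + t)H = H_t \tilde d H + H_t t H$. For the second term use $H_t = H - H_t t H$? No — use $H_t t H = (H - H t H_t) t H$... this is circular. Let me instead just directly verify $H_t D H = H$ by: $H_t D H = \sum_{n\geq 0}(-Ht)^n H D H$; since $HDH = H tH$ (Step 1, which I will recheck with correct signs — the sign in \eqref{eq:deformation_retract_dgca} is $\mathrm{id}-IP = \tilde d H + H\tilde d$ with no extra signs, and $H\tilde d H = H(\mathrm{id}-IP)H - H\tilde d H$ gives $2H\tilde d H = 0$ hence $H\tilde d H=0$ directly), we get $H_t D H = \sum_{n\geq 0}(-1)^n (Ht)^n H t H = \sum_{n\geq 0}(-1)^n(Ht)^{n+1}H$. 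Then add and subtract: this equals $-\sum_{n\geq 1}(-Ht)^n H = H - \sum_{n\geq 0}(-Ht)^n H = H - H_t$. So genuinely $H_t D H = H - H_t + $ correction, meaning I need $H_t t H$ to vanish or the series to be handled differently.

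\textbf{Resolution and honest statement of the main obstacle.} The subtlety — and the step I expect to be the real obstacle — is getting the telescoping signs and the role of the perturbation term exactly right; the lemma as stated should come out of the standard perturbation-lemma bookkeeping where one shows $H_t(\tilde d + t) + (\tilde d + t)H_t = \mathrm{id} - I_\infty P_\infty$ for the perturbed data, and then restricts. Concretely: I will prove $H_t D H = H$ by checking $(H_t D) H = H$ using the known perturbation-lemma identity $H_t D + D H_t = \mathrm{id} - I_t P_t$ (which is part of the perturbation lemma, provable by the same geometric-series manipulation as \eqref{eq:deformation_retract_dgca}), right-multiplying by $H$, and using $P_t H = 0$ (from \eqref{temp}, $PH_t=0$, plus that $P_t = P + PtH_t$ still kills things landing in the image of $H$) together with $D H^2 = 0$; symmetrically $H D H_t = H$ follows from left-multiplying $H_t D + D H_t = \mathrm{id} - I_t P_t$ by $H$ and using $H I_t = 0$ and $H^2 = 0$. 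Finally $H_t D H_t = H_t$: right-multiply $H_t D + D H_t = \mathrm{id}-I_tP_t$ by $H_t$, giving $H_t D H_t + D H_t^2 = H_t - I_t P_t H_t$; now $H_t^2 = 0$ and $P_t H_t = 0$ from \eqref{temp}, so $H_t D H_t = H_t$. The main obstacle is thus not these final one-liners but correctly establishing the ambient identity $H_t D + D H_t = \mathrm{id} - I_t P_t$ with all signs from the Koszul convention tracked — once that is in hand, every clause of the lemma is a two-term cancellation using $H^2 = H_t^2 = PH = P_tH_t = HI = H_tI_t = 0$.
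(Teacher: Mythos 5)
There is a genuine error at the pivot of your argument: the claim $H\tilde{d}H=0$ is false, and the correct value $H\tilde{d}H=H$ is exactly what makes the direct computation close up. From \eqref{eq:deformation_retract_dgca} one has $\tilde{d}H=\mathrm{id}-IP-H\tilde{d}$, so
\[
H\tilde{d}H \;=\; H\bigl(\mathrm{id}-IP-H\tilde{d}\bigr)\;=\;H-HIP-H^{2}\tilde{d}\;=\;H,
\]
using $HI=0$ and $H^{2}=0$. Your substitution appends a spurious extra factor of $H$ on the right of each term (you replaced $\tilde{d}H$ by $(\mathrm{id}-IP-H\tilde{d})$ but then still multiplied by $H$), and the later "$2H\tilde{d}H=0$" manipulation repeats the same slip. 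Consequently $HDH=H+HtH$, not $HtH$, and the geometric series telescopes correctly:
\[
H_tDH=\sum_{n\geq 0}(-Ht)^{n}\bigl(H+HtH\bigr)=\sum_{n\geq 0}(-Ht)^{n}H-\sum_{n\geq 0}(-Ht)^{n+1}H=H.
\]
This is precisely the paper's proof; the discrepancy $H_tDH=H-H_t$ that you obtained was a symptom of the wrong value of $H\tilde{d}H$, not of a sign problem elsewhere. You correctly sensed the contradiction but misdiagnosed its source.

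The fallback route you then propose does not repair the gap, because it is circular: the identity $H_tD+DH_t=\mathrm{id}-I_tP_t$ is exactly equation \eqref{2} of the subsequent theorem, and the paper's proof of that identity (expanding $I_tP_t=(\mathrm{id}-H_tD)IP(\mathrm{id}-DH_t)$) invokes $H_tDH=H$ and $HDH_t=H$ -- the very statements of the present lemma. Unless you supply an independent proof of the homotopy identity for the perturbed data (which in standard treatments again runs through these side conditions), you cannot use it as an input here. The final one-liners in your resolution (right- and left-multiplying by $H$, $H_t$ and using $H^2=H_t^2=0$, $P_tH_t=0$, $H_tI_t=0$) would indeed work if that identity were available, but the honest and shortest path is the direct series computation with the corrected value of $H\tilde{d}H$.
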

\begin{proof}
    Let us prove $H_t D H =H$. $HDH_t = H$ is proved in the same way. 
	\begin{align*}
	H_t D H &= \sum_{n\geq 0} (-Ht)^nH (t+\tilde{d})H\\
	&= \sum_{n\geq 0} (-Ht)^nH tH+\sum_{n\geq 0} (-Ht)^nH (\mathrm{id}-IP-H\tilde{d})\\
	&\overset{\eqref{temp}}{=} -\sum_{n\geq 0} (-Ht)^{n+1} H +\sum_{n\geq 0} (-Ht)^nH\\
	&= H.
	\end{align*}
	The last equation follows:
	\[
	H_t D H_t = \sum_{n\geq 0}(-Ht)^n\underbrace{HDH_t}_{= H} =\sum_{n\geq 0}(-Ht)^n H = H_t.
	\]
\end{proof}
Define a preliminary map $\mathcal{D}_t := \mathrm{id} -DH_t - H_t D$. Note that $\mathcal{D}_t$ is idempotent, i.e.\ $\mathcal{D}_t ^2 = \mathcal{D}_t$. Indeed,
\begin{align}  \label{eq:idempotency_of_D_t}
	\mathcal{D}_t^2 &= (\mathrm{id} -DH_t - H_t D)(\mathrm{id} -DH_t - H_t D)\\
	&= \mathrm{id} -DH_t - H_t D -DH_t + D\underbrace{H_t DH_t}_{= H_t} -H_t D + \underbrace{H_t DH_t}_{= H_t} D\\
	&= \mathrm{id}-DH_t -H_t D = \mathcal{D}_t.
\end{align}
Now we make the following defintions $I_t := \mathcal{D}_t I$, $P_t := P\mathcal{D}_t$, and $d_\infty := P_tDI_t.$
Note that we may also write
\begin{align}
	I_t &= (\mathrm{id}-H_t D) I \label{eq:I_t}\\
	P_t &= P (\mathrm{id} - DH_t) \label{eq:P_t}\\
	d_\infty &= PDI_t = P_tDI. \label{eq:d_infty}
\end{align}

\begin{theorem} \textnormal{\cite{brown}}
The following is a deformation retract of chain complexes 
\[
\begin{tikzcd}
		\arrow[loop left, "H_t"] (T^c(sA),D) \arrow[r, shift left, "P_t"] & (T^c(sH_*(A)),d_\infty) \arrow[l, shift left, "I_t"].
	\end{tikzcd}
\]
\end{theorem}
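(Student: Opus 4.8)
The plan is to check the three defining properties of a deformation retract for the perturbed data: that $P_t$ and $I_t$ are chain maps between $(T^c(sA),D)$ and $(T^c(sH_*(A)),d_\infty)$ (in particular that $d_\infty^2=0$, so the target is a chain complex at all), that $P_tI_t=\mathrm{id}$, and that $H_t$ is a homotopy from $\mathrm{id}$ to $I_tP_t$, i.e.\ $\mathrm{id}-I_tP_t=DH_t+H_tD$; I will also record the side conditions $H_t^2=0$, $P_tH_t=0$, $H_tI_t=0$ along the way. Almost everything reduces to two identities about the operator $\mathcal D_t=\mathrm{id}-DH_t-H_tD$: that it commutes with $D$, and that $I_tP_t=\mathcal D_t$. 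The first is a one-line consequence of $D^2=0$, since $D\mathcal D_t=D-DH_tD=\mathcal D_tD$; together with the idempotency $\mathcal D_t^2=\mathcal D_t$ from \eqref{eq:idempotency_of_D_t}, this exhibits $\mathcal D_t$ as a $D$-stable projector.

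The identity $I_tP_t=\mathcal D_t$ is the crux, and I would prove it in two steps. First, from the Lemma ($H_tDH=HDH_t=H$, $H_tDH_t=H_t$) and $H^2=0$ one gets the somewhat surprising vanishings $H_tH=0$ and $HH_t=0$ --- every term of the series defining $H_t$ picks up a factor $H^2$ --- and hence
\[
\mathcal D_tH=H-D(H_tH)-H_tDH=H-0-H=0,\qquad H\mathcal D_t=H-HDH_t-(HH_t)D=H-H-0=0.
\]
Second, the unperturbed homotopy relation \eqref{eq:deformation_retract_dgca} reads $IP=\mathrm{id}-\tilde dH-H\tilde d$. Substituting this into $I_tP_t=(\mathcal D_tI)(P\mathcal D_t)=\mathcal D_t(IP)\mathcal D_t$ and using $\mathcal D_tH=0=H\mathcal D_t$ annihilates the two correction terms, leaving $I_tP_t=\mathcal D_t^2=\mathcal D_t$ --- which is exactly the desired homotopy identity $\mathrm{id}-I_tP_t=DH_t+H_tD$.

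With these in hand the remaining verifications are short. Using the two forms $d_\infty=PDI_t=P_tDI$ from \eqref{eq:d_infty}, I get $I_td_\infty=I_tP_tDI=\mathcal D_tDI=D\mathcal D_tI=DI_t$ and $d_\infty P_t=PDI_tP_t=PD\mathcal D_t=P\mathcal D_tD=P_tD$, so $I_t$ and $P_t$ are chain maps; and $d_\infty^2=(PDI_t)(P_tDI)=PD\mathcal D_tDI=0$ since $D\mathcal D_tD=D^2\mathcal D_t=0$, so $d_\infty$ is a differential. For $P_tI_t=\mathrm{id}$ I expand $P_tI_t=P\mathcal D_t^2I=P\mathcal D_tI=PI-PDH_tI-PH_tDI$ and kill the last two summands using $H_tI=0$ and $PH_t=0$ from \eqref{temp}, so $P_tI_t=PI=\mathrm{id}$. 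Finally, the side conditions follow from $\mathcal D_tH_t=H_t-DH_t^2-H_tDH_t=0$ and, symmetrically, $H_t\mathcal D_t=0$ (again using $H_t^2=0$ and $H_tDH_t=H_t$), together with \eqref{temp}: $P_tH_t=P\mathcal D_tH_t=0$, $H_tI_t=H_t\mathcal D_tI=0$, and $H_t^2=0$.

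I expect the only genuinely delicate point to be the identity $I_tP_t=\mathcal D_t$: once $\mathcal D_t$ is known to be a $D$-stable projector that $H$ "kills on both sides", everything else is formal. Within that, the step to get exactly right is the vanishing $\mathcal D_tH=H\mathcal D_t=0$, which is where the Lemma and the side conditions on the unperturbed homotopy are actually used. Well-definedness of the series $H_t$, hence of $\mathcal D_t$, is not an obstacle here since it has already been handled by the weight filtration: $t$ strictly lowers weight while $H$ preserves it, so $(-Ht)^nH$ vanishes in each fixed weight for $n$ large, making all the series above locally finite.
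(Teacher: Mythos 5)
Your proposal is correct, and it proves exactly the four identities the paper verifies, using the same key objects (the projector $\mathcal{D}_t=\mathrm{id}-DH_t-H_tD$, the Lemma $H_tDH=HDH_t=H$, $H_tDH_t=H_t$, the side conditions \eqref{temp}, and the unperturbed relation \eqref{eq:deformation_retract_dgca}). The one place you genuinely diverge is the homotopy identity $\mathrm{id}-I_tP_t=DH_t+H_tD$: the paper expands $(\mathrm{id}-H_tD)(\mathrm{id}-\tilde dH-H\tilde d)(\mathrm{id}-DH_t)$ term by term and cancels, whereas you first isolate the auxiliary vanishings $H_tH=HH_t=0$ (each term of the series for $H_t$ absorbs a factor of $H^2$) and deduce $\mathcal{D}_tH=H\mathcal{D}_t=0$, after which $I_tP_t=\mathcal{D}_t(IP)\mathcal{D}_t$ collapses to $\mathcal{D}_t^2=\mathcal{D}_t$ with no bookkeeping. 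Your organization makes the structural reason for the cancellation visible ($\mathcal{D}_t$ is a $D$-stable projector annihilated by $H$ on both sides) and yields the perturbed side conditions $P_tH_t=0$, $H_tI_t=0$ for free via $\mathcal{D}_tH_t=H_t\mathcal{D}_t=0$ --- conditions the paper's proof does not explicitly re-verify for the perturbed data; the paper's expansion, by contrast, needs no auxiliary identities beyond the Lemma. Both arguments rest on the same local finiteness of the series $H_t$ via the weight filtration, which you correctly note has already been established.
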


\begin{proof}
Note that $D\mathcal{D}_t = \mathcal{D}_t D$ and thus
\[
d_\infty^2 \overset{\eqref{eq:d_infty}}{=} PDI_tP_tDI = PD\mathcal{D}_t DI = P\mathcal{D}_t D^2 I = 0.
\]
Hence $(T^c(sH_*(A)),d_\infty)$ is indeed a chain complex. We must to prove the following equations:
\begin{align}
	P_t I_t &= \mathrm{id} \label{1}\\
	\mathrm{id}- I_tP_t &= D H_t + H_tD \label{2}\\
	d_\infty P_t &= P_t D \label{4}\\
	D I_t &= I_t d_\infty \label{5}.
	\end{align}
Proof of equation (\ref{1}):
\begin{align*}
	P_t I_t &= P\mathcal{D}_t\mathcal{D}_t I \\
	&\overset{\eqref{eq:idempotency_of_D_t}}{=} P(\mathrm{id} -H_t D- DH_t)I \\
	&= PI = \mathrm{id}.
\end{align*}
Proof of equation \eqref{2}:
\begin{align*}
	I_tP_t &= (\mathrm{id}-H_t D) I P(\mathrm{id} - DH_t)\\
	&= (\mathrm{id}-H_t D) (\mathrm{id}-\tilde{d}H-H\tilde{d})(\mathrm{id} - DH_t)\\
	&= (\mathrm{id}-\tilde{d}H-H\tilde{d}-H_t D+H_tt\tilde{d}H+ \underbrace{H_t DH}_{=H}D)(\mathrm{id} - DH_t)\\
	&=(\mathrm{id}-\tilde{d}H-H_t D+H_ttdH )(\mathrm{id} - DH_t)\\
	&= \mathrm{id}-\tilde{d}H-H_t D +H_tt\tilde{d}H- DH_t + \tilde{d}\underbrace{HDH_t}_{=H}-H_tt\tilde{d}\underbrace{HDH_t}_{=H}\\
	&= \mathrm{id}-H_t D - DH_t.
\end{align*} 
Proof of equation (\ref{4}) and (\ref{5}):
\begin{align*}
	d_\infty P_t &\overset{\eqref{eq:d_infty}}{=} P DI_tP_t = P D\mathcal{D}_t = P \mathcal{D}_t D = P_t D\\
	I_t d_\infty &\overset{\eqref{eq:d_infty}}{=} I_t P_t DI = \mathcal{D}_tDI = D\mathcal{D}_t I = DI_t.
\end{align*}	
\end{proof}

The next proposition shows that $I_t$, $P_t$ and $d_\infty$ are compatible with the coalgebra structures. This is an extension of the results obtained in \cite{brown}. The next proposition is also proved in \cite[Lemma $2.1_*$]{HUKA} in more generality. As we are in a special case of Lemma $2.1_*$ in \cite{HUKA} there is a shorter and more explicit proof:
\begin{prop} \label{prop:perturbation_lemma_coalgebra}
	The following is a strong deformation retract of dg-coalgebras
	\[
	\begin{tikzcd}
		\arrow[loop left, "H_t"] (T^c(sA),D) \arrow[r, shift left, "P_t"] & (T^c(sH_*(A)),d_\infty) \arrow[l, shift left, "I_t"].
	\end{tikzcd}
	\]
\end{prop}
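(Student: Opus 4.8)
Since the theorem preceding this proposition already exhibits the diagram as a deformation retract of chain complexes, and since the side conditions $H_t^2=0$, $P_tH_t=0$, $H_tI_t=0$ of a \emph{strong} deformation retract follow from \eqref{temp} together with $H_t\mathcal{D}_t=\mathcal{D}_tH_t=0$ (a consequence of $H_tDH_t=H_t$ and $H_t^2=0$), what remains is to show that $P_t$ and $I_t$ are morphisms of coalgebras and that $d_\infty$ is a coderivation. I would begin by noting that $t=D-\tilde{d}$ is itself a coderivation on $T^c(sA)$: both $D$ and $\tilde{d}$ are coderivations by Proposition \ref{prop:(A,m)==d^2}, hence $\Delta t=(t\otimes\mathrm{id}+\mathrm{id}\otimes t)\Delta$.

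The crux is the claim that $H_t$ is a \emph{coderivation along} $(\mathrm{id},\mathcal{D}_t)$, where $\mathcal{D}_t=I_tP_t=\mathrm{id}-DH_t-H_tD$ by \eqref{2}; explicitly, $\Delta H_t=(\mathrm{id}\otimes H_t+H_t\otimes\mathcal{D}_t)\Delta$. I would prove this by induction on the weight grading of $T^c(sA)$. The weight $\le 1$ case is immediate: the perturbation $t$ vanishes on $k\oplus sA$ (its only component comes from $b_2$, which needs two inputs), so there $H_t=H$ and $\mathcal{D}_t=IP$, and the identity reduces to the coderivation property \eqref{eq:H_a_coderivation} of $H$ along $(\mathrm{id},IP)$. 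For the inductive step I would use the resummation
\[
H_t=H-H_t\,t\,H,
\]
which holds because $H_t=\sum_{n\ge 0}(-Ht)^nH$ telescopes; the essential point is that $t$ stands to the right, so inside $H_t\,t\,H$ the operator $H_t$ is applied to an element of strictly smaller weight, and there the inductive hypothesis is available. Computing $\Delta(H_t\,t\,H)=(\Delta H_t)\circ(tH)$, substituting the inductive form of $\Delta H_t$, and then inserting $\Delta H=(\mathrm{id}\otimes H+H\otimes IP)\Delta$ and $\Delta t=(t\otimes\mathrm{id}+\mathrm{id}\otimes t)\Delta$, one is left with a sum of tensor terms that collapses using: $H^2=0$, $HI=0$, $PH=0$ (hence $H_tH=0$, $H_tI=0$); $\tilde{d}H=\mathrm{id}-IP-H\tilde{d}$ from \eqref{eq:deformation_retract_dgca}; $DI=tI$, since $\tilde{d}I$ is a coderivation along $I$ with vanishing first component; $\mathcal{D}_tI=I_t$ and $\mathcal{D}_tD=D\mathcal{D}_t$; the identities $HDH_t=H_tDH=H$ (which give $\mathcal{D}_tH=0$); and the consequences $H_t\,t\,H=H-H_t$, $H_t\,t\,I=I-I_t$, $\mathcal{D}_t\,t\,H=-\mathcal{D}_t\tilde{d}H=I_tP-\mathcal{D}_t$. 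The outcome is $\Delta(H_t\,t\,H)=(H\otimes IP+\mathrm{id}\otimes H-\mathrm{id}\otimes H_t-H_t\otimes\mathcal{D}_t)\Delta$, whence $\Delta H_t=\Delta H-\Delta(H_t\,t\,H)=(\mathrm{id}\otimes H_t+H_t\otimes\mathcal{D}_t)\Delta$, closing the induction.

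Granting the key step, the rest is short. First, $\mathcal{D}_t$ is a coalgebra endomorphism of $T^c(sA)$: expanding $\Delta\mathcal{D}_t=\Delta(\mathrm{id}-DH_t-H_tD)$ with $D$ a coderivation and $H_t$ a coderivation along $(\mathrm{id},\mathcal{D}_t)$, the mixed terms cancel by $D\mathcal{D}_t=\mathcal{D}_tD$ and $DH_t+H_tD=\mathrm{id}-\mathcal{D}_t$, leaving $\Delta\mathcal{D}_t=(\mathcal{D}_t\otimes\mathcal{D}_t)\Delta$. Since $P_t=P\mathcal{D}_t$ and $I_t=\mathcal{D}_tI$ (using $PH_t=0$, resp. $H_tI=0$, together with \eqref{eq:P_t} and \eqref{eq:I_t}), both $P_t$ and $I_t$ are composites of coalgebra morphisms, hence coalgebra morphisms. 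Finally $P_tD$ is a coderivation along $P_t$, so composing with $I_t$ and using $P_tI_t=\mathrm{id}$ yields $\Delta' d_\infty=(d_\infty\otimes\mathrm{id}+\mathrm{id}\otimes d_\infty)\Delta'$ for $d_\infty=P_tDI_t$; in particular $(T^c(sH_*(A)),d_\infty)$ is a dg-coalgebra. Combined with the facts already in hand — that the diagram is a deformation retract of chain complexes and that $P_t,I_t$ commute with the differentials — this proves the assertion.

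The main obstacle is precisely the self-referential nature of the key step: $H_t$ is to be a coderivation along $(\mathrm{id},I_tP_t)$, yet $I_tP_t=\mathcal{D}_t$ is itself assembled from $H_t$. This is what forces the induction on weight and, within it, the choice of the resummation $H_t=H-H_t\,t\,H$ rather than $H_t=H-H\,t\,H_t$, so that the recursive occurrence of $H_t$ lives in strictly lower weight; keeping track of the Koszul signs through the iterated tensor compositions is the other, purely bookkeeping, source of effort.
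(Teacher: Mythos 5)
Your argument is correct, but it reaches the conclusion by a genuinely different route than the one taken here. The proof in the text establishes $\Delta P_t = (P_t\otimes P_t)\Delta$ head-on: both sides are expanded as sums graded by the number of occurrences of the perturbation $t$, and the resulting identity \eqref{eq:induction} is proved by induction on that number; the analogous computation for $I_t$ is then asserted to be similar, and $d_\infty$ being a coderivation follows formally from $d_\infty = P_tDI_t$. You instead prove the strictly stronger statement that the perturbed homotopy $H_t$ is a coderivation along $(\mathrm{id},\mathcal{D}_t)$, by induction on the weight grading via the resummation $H_t = H - H_t\,t\,H$ (correctly chosen so that the recursive occurrence of $H_t$ sits in strictly lower weight), and then deduce everything else formally: $\mathcal{D}_t$ is a coalgebra endomorphism, hence $P_t = P\mathcal{D}_t$ and $I_t = \mathcal{D}_tI$ are coalgebra morphisms in one stroke, and $d_\infty$ is a coderivation. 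I checked your collapse of $\Delta(H_t\,t\,H)$ term by term, including the Koszul signs on the cross terms $(\mathrm{id}\otimes H_t)(H\otimes tIP)$ and $(H_t\otimes\mathcal{D}_t)(tH\otimes IP)$, and it does yield $(H\otimes IP + \mathrm{id}\otimes H - \mathrm{id}\otimes H_t - H_t\otimes\mathcal{D}_t)\Delta$ as you claim; the auxiliary identities $H_t tH = H - H_t$, $H_t tI = I - I_t$, $\mathcal{D}_tH = 0$ and $\mathcal{D}_t tH = I_tP - \mathcal{D}_t$ all hold. What your approach buys is twofold: it avoids the separate (here omitted) verification for $I_t$, and it upgrades the conclusion to say that the homotopy itself is compatible with the coalgebra structure, which is closer to the literal meaning of a \emph{strong} deformation retract of dg-coalgebras and is not established by the argument in the text. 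The price is that your key lemma is self-referential ($\mathcal{D}_t = I_tP_t$ is built from $H_t$), so the weight induction is genuinely needed and must be set up exactly as you do; a reader would still want the remaining sign bookkeeping written out in full.
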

\begin{proof}
	Let us start with a straight forward computation
	\begin{align*}
	(P_t\otimes P_t) \Delta &= ((P - PtH_t)\otimes (P-PtH_t))\Delta	 \\
	&=\big(( P\sum_{i\geq 0} (-tH)^i) \otimes (P\sum_{j\geq 0} (-tH)^j)\big)\Delta	\\
	&= \sum_{i,j\geq 0} \big(P(-tH)^{i} \otimes P(-tH)^{j}\big)\Delta \\
	&= \sum_{n\geq 0} \sum_{\substack{i+j = n\\i,j\geq 0}} P(-tH)^i \otimes P (-tH)^{j}\big)\Delta\\
	&\overset{!}{=} \Delta P_t \\
	&= \Delta (P - PtH_t)\\
	&= (P\otimes P)\Delta - (P\otimes Pt + Pt\otimes P) \Delta H_t\\
	&= (P\otimes P)\Delta - \sum_{n\geq 0}(P\otimes Pt + Pt\otimes P) \Delta (-Ht)^nH.
	\end{align*}
	We compare the terms on both sides of the equation with the same number of $t$'s\footnote{This makes sense because these are the terms with the same weight.}. The terms with zero or only one $t$ cancel easy enough. For all $n\geq 2$ we get
	\begin{equation} \label{eq:induction}
		\sum_{\substack{i+j = n\\i,j\geq 0}} P(-tH)^i \otimes P (-tH)^{j}\big)\Delta \overset{!}{=} -(P\otimes Pt + Pt\otimes P)\Delta (-Ht)^{n-1} H,
	\end{equation}
	which we prove by induction. The induction start is $n=2$. Starting from the right-hand side of equation \eqref{eq:induction}:
	\begin{align*}
		(P\otimes Pt&+Pt\otimes P)\Delta HtH = (P\otimes Pt+Pt\otimes P)(\mathrm{id}\otimes H +H\otimes IP)\Delta tH\\
		&= (P\otimes PtH + PtH\otimes PIP)\Delta tH	\\
		&= (P\otimes PtH + PtH\otimes P)(t\otimes \mathrm{id} +\mathrm{id}\otimes t)\Delta H	\\
		&= (Pt\otimes PtH + P\otimes PtHt + PtHt\otimes P + PtH \otimes Pt)\Delta H	\\
		&= (Pt\otimes PtH + P\otimes PtHt + PtHt\otimes P + PtH \otimes Pt)\\
		&\quad \circ (H\otimes IP +\mathrm{id}\otimes H)\Delta 	\\
		&= (P\otimes PtHtH + PtHtH\otimes \underbrace{P IP}_{= P} + PtH\otimes PtH)\Delta
	\end{align*}
	which is precisely equal to the left-hand side of equation \eqref{eq:induction}. Now assume equation \eqref{eq:induction} holds for a fixed $n\geq 2$ and compute the following
	\begin{align*}
	    &-(P\otimes Pt + Pt\otimes P)\Delta (-Ht)^{n} H \\
	    &\overset{\mathrm{i.a.}}{=}- \big(\sum_{i+j=n} P(-tH)^{i} \otimes P(-tH)^j \big)\Delta tH\\
	    &=- \big(\sum_{i+j=n} P(-tH)^{i} \otimes P(-tH)^j \big) (t\otimes H + tH\otimes IP + \mathrm{id}\otimes tH - H\otimes tIP)\Delta \\
	    &= -\big(P(-tH)^n tH \otimes PIP +\sum_{i+j=n} P(-tH)^{i}\otimes P(-tH)^{j}tH\big)\Delta \\
	    &= \sum_{i+j=n+1} (P(-tH)^{i}\otimes P(-tH)^{j})\Delta.
	\end{align*}
	This completes the induction step. Thus $P_t$ is a coalgebra map. The computation showing that $I_t$ is a coalgebra map is similar and is omitted here. From $d_\infty = P_tDI_t$, it follows that $d_\infty$ is a coderivation along $P_tI_t=\mathrm{id}$.
\end{proof}

\subsection{Perturbation Lemma and \texorpdfstring{$A_\infty$}{TEXT}-algebras}
We now translate the results back into the $A_\infty$-algebra setting. Proposition \ref{prop:(A,m)==d^2} implies that $d_\infty$ induces an $A_\infty$-structure on the homology $(H_*(A),m)$. Corollary \ref{cor:A_infty_morphisms=F} implies that $I_t$, $P_t$ induce $A_\infty$-morphisms $I_\infty$, $P_\infty$, respectively. We can prove Theorem \ref{thm:minimal_model} in the following special case.

\begin{theorem}\label{thm:existance_minimal_model}
	Let $(A,d)$ be a dg-algebra then there exists a minimal model
	\[
	i: (H_*(A),m) \rightarrow (A,d).
	\]
	Moreover, the $A_\infty$-structure on $H_*(A)$ is unique up to isomorphism of $A_\infty$-algebras.
\end{theorem}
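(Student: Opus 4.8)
The plan is to repackage the strong deformation retract of dg-coalgebras from Proposition~\ref{prop:perturbation_lemma_coalgebra} into $A_\infty$-data and then read off the normalisation conditions of a minimal model from the explicit formulas \eqref{eq:I_t}--\eqref{eq:d_infty}. By Proposition~\ref{prop:(A,m)==d^2} the square-zero coderivation $d_\infty$ on $T^c(sH_*(A))$, which satisfies $d_\infty(1)=0$ and has degree $-1$, corresponds to an $A_\infty$-structure $m$ on $H_*(A)$; by Corollary~\ref{cor:A_infty_morphisms=F} the dg-coalgebra morphism $I_t$ corresponds to an $A_\infty$-morphism $i:=I_\infty\colon (H_*(A),m)\to(A,d)$, and likewise $P_t$ to $P_\infty\colon(A,d)\to(H_*(A),m)$. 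It then remains to check that $m_1=0$, that $m_2$ is the induced product, and that $i_1$ induces the identity on homology; the last condition already forces $i_1$ to be a quasi-isomorphism, so $i$ is automatically a quasi-isomorphism and the data $i$ is a minimal model.

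For the verification I would use the weight grading on $T^c(sA)$ introduced in the proof of the perturbation lemma: the coderivation generated by $b_n$ lowers weight by $n-1$, so $m_1$, $m_2$ and $i_1$ are extracted from the weight-preserving part of $d_\infty$, the weight-drop-by-one part of $d_\infty$, and the weight-preserving part of $I_t$, respectively. Plugging in $I_t=(\mathrm{id}-H_tD)I$, $P_t=P(\mathrm{id}-DH_t)$, $d_\infty=PDI_t=P_tDI$ and using $di=0$, $pd=0$ together with the side conditions $ph=0$, $hi=0$ from \eqref{eq:additiona_conditions} and $pi=\mathrm{id}$, one finds on weight $1$ that $I_t=I$, that $\pi\, d_\infty|_{sH_*(A)}=0$ (hence $m_1=0$), and that $\pi\, I_t|_{sH_*(A)}=s\,i\,s^{-1}$ (hence $i_1=i$, which induces the identity on homology by construction of the deformation retract); on weight $2$ one gets $\pi\, d_\infty|_{(sH_*(A))^{\otimes 2}}=s\,p\,\nu\,(i\otimes i)\,(s^{-1})^{\otimes 2}$ after the cancellation $p\,dh\,\nu(i\otimes i)=0$ coming from $pd=0$, and $p\,\nu\,(i\otimes i)$ is exactly a representative of the induced product $m_2'$. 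The main obstacle of the proof lies here: one must carry the Koszul signs correctly through the passages between $m_n$, $b_n$, $f_n$ and the coalgebra-level maps, and organise the weight decomposition so that only the listed low-weight terms survive.

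For uniqueness, let $(H_*(A),m')$ be any $A_\infty$-structure on $H_*(A)$ admitting a minimal model $j\colon (H_*(A),m')\to(A,d)$. By Proposition~\ref{prop:composition} the composite $P_\infty\circ j\colon (H_*(A),m')\to (H_*(A),m)$ is a morphism of $A_\infty$-algebras, and its first component is $(P_\infty)_1\circ j_1=p\circ j_1$. Now $p$ and $j_1$ are chain maps between complexes with zero differential (respectively with differential $d$) that each induce the identity on homology, and a chain map between complexes with zero differential coincides with the map it induces on homology; hence $p\circ j_1=\mathrm{id}_{H_*(A)}$. By Proposition~\ref{prop:a_infty_isomorphism} (see also Proposition~\ref{prop:inverse}), $P_\infty\circ j$ is then an isomorphism of $A_\infty$-algebras $(H_*(A),m')\xrightarrow{\ \cong\ }(H_*(A),m)$. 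Applying this to two such structures $m'$ and $m''$ and composing the resulting isomorphisms through $(H_*(A),m)$ shows $m'\cong m''$, which is the asserted uniqueness up to isomorphism of $A_\infty$-algebras.
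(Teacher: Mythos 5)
Your proposal is correct and follows essentially the same route as the paper: translate the perturbed deformation retract into $A_\infty$-data via Proposition~\ref{prop:(A,m)==d^2} and Corollary~\ref{cor:A_infty_morphisms=F}, read off $m_1=0$, $i_1=i$ and $(d_\infty)_2$ from the low-weight components of $d_\infty=\sum_{n\geq 0}P(-tH)^n tI$ and $I_t=\sum_{n\geq 0}(-Ht)^n I$, and obtain uniqueness by composing a second minimal model with $P_\infty$ and invoking Proposition~\ref{prop:a_infty_isomorphism}. The only step you leave implicit that the paper spells out is why $p\circ\nu\circ(i\otimes i)$ agrees with the induced product, which the paper justifies by noting that $i[x\wedge y]-i[x]\wedge i[y]$ is exact and $p$ kills exact forms.
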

\begin{proof}
	We can rewrite the differential $d_\infty$ in a more explicit form:
	\begin{align} \label{eq:A_infty_structure_from_perturbation_lemma}
		d_\infty &= P_t DI_t = P\mathcal{D}_tD\mathcal{D}_tI=PD\mathcal{D}_t\mathcal{D}_tI \nonumber\\
		& = PD\mathcal{D}_tI= Pt(\mathrm{id} - H_tt)I\nonumber \\
		&=PtI - \sum_{n\geq 0}Pt(-Ht)^nHtI\nonumber \\
		&= \sum_{n\geq 0}Pt(-Ht)^nI = \sum_{n\geq 0}P(-tH)^ntI.
	\end{align}
	We first show that $ m_1 = 0$. For this simply note that $I$ is equal to $sis^{-1}$ on $sH^{*}(A)$ and $t$ is equal to zero on $sA$. Secondly, we show that the $m_2$ map is induced my $\nu$, the product on $A$. Indeed, $I$ restricted to $sH^{*}(A)\otimes sH^{*}(A)$ is equal to $sis^{-1}\otimes sis^{-1}$, $t$ is equal to $s\nu(s^{-1})^{\otimes 2}$ on $sA\otimes sA$, and $P$ is equal to $sps^{-1}$ on $sA$, hence $ -s^{-1}( d_\infty )_2 s^{\otimes 2} $ is equal to
	\[
	p\circ \nu \circ (i\otimes i).
	\]
    To see that this is equal to the induced product, let $[x], [y] \in H_*(A)$ be arbitrary elements we must show that
    \[
    [x\wedge y] = p(i[x]\wedge i[y]).
    \]
    Recall that $pi = \mathrm{id}$ thus it is equivalent to showing that
    \[
    p(i[x\wedge y] - i[x] \wedge i[y]) = 0.
    \]
    From $i$ being a cycle choosing map we see that $i[x\wedge y] - i[x] \wedge i[y]$ is always an exact form. Now recall that by construction of the projection map p, we know that $p$ is zero on exact forms. Lastly, let us rewrite $I_t$ more explicitly as 
	\[
	I_t = \sum_{n\geq 0} (-Ht)^n I.
	\]
	From this it immediately follows that $(I_\infty)_1 = i$. This proves existence. Recall that there also exists a map. $P_\infty:(A,d)\rightarrow (H_*(A),m)$ with $(P_\infty)_1 = p$. Let 
	\[
	I_\infty':(H_*(A),m')\rightarrow (A,d)
	\]
	be another minimal model. Then 
	\[
	P_\infty \circ I_\infty' :(H_*(A),m')\rightarrow (H_*(A),m)
	\]
	is an isomorphism of $A_\infty$-algebras as $(P_\infty \circ I_\infty')_1 = p\circ (I_\infty')_1= \mathrm{id}$. This proves uniqueness.
\end{proof}
For a quasi-isomorphism $f$ of dg-algebras there need not exist a morphism $f^{-1}$ of dg-algebras such that $H_*(f^{-1}) = H_*(f)^{-1}$. The following Lemma shows that allowing $A_\infty$-morphisms is a way to get around this. 
\begin{lemma} \label{lem:quasi-isomorphic-inverse}
\textnormal{\cite[Theorem 10.4.4.]{loday_vallette}} Assume $f:(A,d)\rightarrow (A',d')$ is a quasi-isomorphism of dg-algebras, then there exists a quasi-isomorphism of $A_\infty$-algebras
\[
g: (A',d')\rightarrow (A,d),
\]
such that $H_*(g_1) = H_*(f)^{-1}$.
\end{lemma}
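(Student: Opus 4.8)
The plan is to combine two ingredients already available in the paper: the minimal model $i\colon (H_*(A),m)\to (A,d)$ produced in Theorem \ref{thm:existance_minimal_model} via the perturbation lemma, and the fact that $A_\infty$-isomorphisms exist whenever the linear component is a linear isomorphism (Proposition \ref{prop:a_infty_isomorphism} / Proposition \ref{prop:inverse}). Recall that the perturbation lemma simultaneously produces a projection $P_\infty\colon (A,d)\to (H_*(A),m)$ with $(P_\infty)_1 = p$ and an inclusion $I_\infty$ with $(I_\infty)_1 = i$, and that $p$, $i$ both induce the identity on homology. So I would first reduce the general statement to the case $A' = H_*(A)$.

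First I would apply Theorem \ref{thm:existance_minimal_model} to both $(A,d)$ and $(A',d')$, obtaining minimal models with $A_\infty$-morphisms $I_\infty\colon (H_*(A),m)\to (A,d)$ and $P'_\infty\colon (A',d')\to (H_*(A'),m')$ whose linear parts are the cycle-choosing inclusion and the projection, respectively, and which induce (the) isomorphism(s) on homology. Since $f$ is a quasi-isomorphism, the linear map $H_*(f)\colon H_*(A)\to H_*(A')$ is an isomorphism; lift it to an $A_\infty$-morphism $\phi\colon (H_*(A),m)\to (H_*(A'),m')$ with $\phi_1 = H_*(f)$ — this is legitimate because $(H_*(A),m)$ and $(H_*(A'),m')$ are minimal and $H_*(f)$ is compatible with the induced products, but in fact the cleanest route is to set $\phi := P'_\infty\circ f\circ I_\infty$ (composition of $A_\infty$-morphisms, well-defined by Proposition \ref{prop:composition}), whose linear component is $p'\circ f_1 \circ i = H_*(f)$ and hence an isomorphism. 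By Proposition \ref{prop:inverse}, $\phi$ admits an $A_\infty$-inverse $\psi\colon (H_*(A'),m')\to (H_*(A),m)$ with $\psi_1 = H_*(f)^{-1}$.

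Then I would define $g := I_\infty \circ \psi \circ P'_\infty \colon (A',d')\to (A,d)$, again a composition of $A_\infty$-morphisms. Its linear component is $g_1 = i \circ H_*(f)^{-1}\circ p'$, and one checks $H_*(g_1) = H_*(i)\circ H_*(f)^{-1}\circ H_*(p') = \mathrm{id}\circ H_*(f)^{-1}\circ \mathrm{id} = H_*(f)^{-1}$, using that $i$ and $p'$ induce the identity on homology. In particular $H_*(g_1)$ is an isomorphism, so $g$ is a quasi-isomorphism of $A_\infty$-algebras, which is exactly the claim.

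The main obstacle is not any deep construction — everything is assembled from results already proved — but rather the bookkeeping needed to justify that $\phi = P'_\infty \circ f \circ I_\infty$ really is an $A_\infty$-morphism with the claimed linear term: one must invoke Corollary \ref{cor:A_infty_morphisms=F} to view $f$ as a dg-coalgebra morphism, compose at the coalgebra level (where composition is manifestly associative and well-defined), and then read off that the first component of the composite corresponds to $p'\circ f_1\circ i$; the sign $(-1)^{l(i_1,\dots,i_r)}$ in \eqref{eq:a_infty_composition} plays no role for the linear term since only the term $r=1$, $i_1=n=1$ contributes. A secondary subtlety is that $g$ need not be the two-sided $A_\infty$-inverse of anything, and need not restrict to a dg-algebra morphism — this is precisely the point of the lemma and of the remark that dg-algebras do not form a full subcategory — so I would be careful to claim only that $g$ is an $A_\infty$-quasi-isomorphism, not that it is inverse to $f$ in any category.
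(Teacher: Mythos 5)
Your proposal is correct and follows essentially the same route as the paper: both form $\phi = P_\infty' \circ f \circ I_\infty$, invert it via Proposition \ref{prop:inverse} using that its linear part is $H_*(f)$, and take $g = I_\infty \circ \phi^{-1} \circ P_\infty'$. Your additional verification that $H_*(g_1) = H_*(f)^{-1}$ is a welcome detail the paper leaves implicit.
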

\begin{proof}
	Let 
	\[
	\begin{tikzcd}
		(A,d) \arrow[r, shift left, "P_\infty"] & (H_*(A),m) \arrow[l, shift left, "I_\infty"]
	\end{tikzcd}
	\]
	and 
	\[
	\begin{tikzcd}
		(A',d') \arrow[r, shift left, "P_\infty'"] & (H_*(A'),m') \arrow[l, shift left, "I_\infty'"]
	\end{tikzcd}
	\]
	denote $A_\infty$-quasi-isomorphisms coming from deformation retracts. Consider the composition
	\[
	\begin{tikzcd}
		(H_*(A),m) \arrow[r, "I_\infty"] & (A,d) \arrow[r, "f"] &(A',d') \arrow[r, "P_\infty'"]& (H_*(A'),m')
	\end{tikzcd}
	\]
	and note that $(P_\infty' \circ f \circ I_\infty)_1 = p'\circ f\circ i = H_*(f)$, which is invertible. Thus by Proposition \ref{prop:inverse} the $A_\infty$-morphism $P_\infty' \circ f \circ I_\infty$ is invertible. The claimed quasi-isomorphism is now simply the composition
	\[
	\begin{tikzcd}
	    (A',d') \arrow[r,"P_\infty'"] & (H_*(A'),m') \arrow[rr, "(P_\infty' \circ f \circ I_\infty)^{-1}"] && (H_*(A),m) \arrow[r, "I_\infty"] & (A,d).
	\end{tikzcd}
	\]
\end{proof}
\begin{remark} \label{rmk:1->2}
This allows us to prove part of 1.\ $\rightarrow$ 2.\ of Theorem \ref{thm:formality_def_equivalenz}, i.e.\ it remains to prove that the resulting map is balanced. If 
\[
    (H_*(A),0)\leftarrow \bigcdot \ ... \ \bigcdot \rightarrow (A,d)
\]
is a zig-zag of quasi-isomorphisms then simply ``invert'' all arrows pointing to the left, using Lemma \ref{lem:quasi-isomorphic-inverse}, and then compose to get a quasi-isomorphism of $A_\infty$-algebras
\[
(H_*(A),0)\rightarrow (A,d).
\]
\end{remark}

\section{Explicit Proofs}
\label{sec:explicit_proofs}
As in the previous section we fix a dg-algebra $(A,d,\nu)$ and a deformation retract
\[	
\begin{tikzcd}
		\arrow[loop left, "h"] (A,d) \arrow[r, shift left, "p"] & \arrow[l, shift left, "i"](H_*(A),0).
\end{tikzcd}
\]
\subsection{Explicit Minimal Model}
The aim of this subsection is to get an explicit formula for the $A_\infty$-structure on the homology as constructed in the previous section and compare the formula to the one in \cite{merkulov}. Notice that $s^{\otimes n} (s^{-1})^{\otimes n}:(sA)^{\otimes n}\rightarrow (sA)^{\otimes n}$ is not simply the identity but comes with a sign, due to the Koszul sign convention. Let $\alpha_1 = 1$ and
\[
\alpha_n := (-1)^{\sum_{i =1}^{n-1} i} = (-1)^{\frac{n(n-1)}{2}} \ \mathrm{for}\ n\geq 2.
\]
The following equations are immediate for all $n\geq 1$
\[
\alpha_{n} (-1)^n = \alpha_{n+1}, \quad \alpha_{n+2} = 	-\alpha_n.
\]
By induction we get for $n\geq 1$ 
\[
s^{\otimes n} (s^{-1})^{\otimes n} = \alpha_n \mathrm{id}^{\otimes n}\ .
\]
Let us set
\[
h_n := \sum_{r+t = n-1} \mathrm{id}^{\otimes r}\otimes h\otimes (ip)^{\otimes t} , \quad \mathrm{for}\ n\geq 1
\]
and 
\[
\nu_n := \sum_{r+t = n-2} (-1)^t \mathrm{id}^{\otimes r}\otimes \nu \otimes \mathrm{id}^{\otimes t}, \quad \mathrm{for}\ n\geq 2.
\]
Let $\chi_2:= \nu$ and for all $n\geq 3$
\[
\chi_{n} := (-1)^{n-1}\chi_{n-1} h_{n-1}\nu_{n}.
\]
A short induction argument shows that the $A_\infty$-structure on $H_*(A)$ from equation \eqref{eq:A_infty_structure_from_perturbation_lemma} can be written for all $n\geq 2$
\[
m_n = p\chi_n i^{\otimes n}.
\]
Before embarking on the proof, relating this to Merkulov's minimal model, notice that for $n\geq 1$
\begin{align*}
	h_n \nu_{n+1} i^{\otimes n+1} &= \sum_{r+t=n-1} (-1)^{t}(\mathrm{id}^{\otimes r}\otimes h\nu\otimes (ip)^{\otimes t} )i^{\otimes n+1}
\end{align*}
due to $h i= 0$ and $pi = \mathrm{id}$. More generally, let $f_1,...,f_{n+1}$ be placeholder maps either equal to $i:H_*(A)\rightarrow A$ or equal to $h\circ f:B \rightarrow A$, where $B$ is an arbitrary graded space and $f :B \rightarrow A$ is an arbitrary graded linear map. Let us denote this as 
\[
f_1,...,f_{n+1} \in \{i, hf: f\ \mathrm{arbitrary}\}.
\]
It is easy to see that
\begin{align}\label{eq:alpha}
    &h_n \nu_{n+1} (f_1\otimes ...\otimes f_{n+1})= \sum_{r+t=n-1}(-1)^t f_1\otimes ... \otimes f_r\nonumber \\
    &\qquad\qquad\qquad\qquad \otimes h\nu(f_{r+1}\otimes f_{r+2})\otimes ip f_{r+3}\otimes ...\otimes ip f_{n+1}.
\end{align}
Moreover, one can show that for all $n\geq 2$ and $k,l\geq 1$ such that $k+l = n$
\begin{equation} \label{eq:h_nu}
	h_{n}\nu_{n+1} (f_1 \otimes ... \otimes f_{n+1}) = ((-1)^l h_k\nu_{k+1} \otimes (ip)^{\otimes l} + \mathrm{id}^{\otimes k}\otimes h_l\nu_{l+1})(f_1 \otimes ... \otimes f_{n+1}). 
\end{equation}
Now we are set up to prove the main result, which helps us relate the two $A_\infty$-structures. The formula in the next proposition can be guessed after doing the first cases in $n$. A similar formula is given in \cite{merkulov}.
\begin{prop}
	For all $n\geq 3$ and all $f_1,...,f_n\in \{i,hf:f \ \mathrm{arbitary}\}$
	\begin{align}\label{eq:chi_m}
		 &\chi_n \circ (f_1\otimes... \otimes f_n) = \big[ \sum_{\substack{k+l=n\\k,l\geq 2}}(-1)^{k(l+1)}\nu(h\chi_k \otimes h\chi_l (ip)^{\otimes l}) \nonumber \\
		 &\qquad\qquad+(-1)^{(n-1)}\nu(\mathrm{id}\otimes h\chi_{n-1})-\nu(h\chi_{n-1}\otimes ip)\big]\circ (f_1\otimes... \otimes f_n).
	\end{align}
\end{prop}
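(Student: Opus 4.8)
The plan is to prove the identity \eqref{eq:chi_m} by induction on $n$, using the recursive definition $\chi_n = (-1)^{n-1}\chi_{n-1}h_{n-1}\nu_n$ together with the structural identities \eqref{eq:alpha} and \eqref{eq:h_nu} established above. For the base case $n=3$ one computes $\chi_3 = (-1)^{2}\chi_2 h_2\nu_3 = \nu h_2\nu_3$, and the instance $k=l=1$ of \eqref{eq:h_nu} gives $h_2\nu_3 = -\,h\chi_2\otimes ip + \mathrm{id}\otimes h\chi_2$ (recall $h_1\nu_2 = h\nu = h\chi_2$). Composing with $\nu$ and evaluating on $f_1\otimes f_2\otimes f_3$ yields $\chi_3\circ(f_1\otimes f_2\otimes f_3) = \nu(\mathrm{id}\otimes h\chi_2)\circ(f_1\otimes f_2\otimes f_3) - \nu(h\chi_2\otimes ip)\circ(f_1\otimes f_2\otimes f_3)$, which is exactly the right-hand side of \eqref{eq:chi_m} for $n=3$: there $(-1)^{n-1}=1$ and the sum over $k,l\ge 2$ with $k+l=3$ is empty.

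For the inductive step, assume \eqref{eq:chi_m} holds for every $m$ with $3\le m<n$. Expanding $h_{n-1}\nu_n$ on $f_1\otimes\cdots\otimes f_n$ via \eqref{eq:alpha} (with the index shifted by one) gives
\[
\chi_n\circ(f_1\otimes\cdots\otimes f_n) = (-1)^{n-1}\sum_{r+t=n-2}(-1)^t\,\chi_{n-1}\circ\bigl(f_1\otimes\cdots\otimes f_r\otimes h\nu(f_{r+1}\otimes f_{r+2})\otimes ipf_{r+3}\otimes\cdots\otimes ipf_n\bigr).
\]
Each argument of $\chi_{n-1}$ here is a tensor of $n-1$ maps, each again of the allowed shape $\{i,hf\}$: the factor $h\nu(f_{r+1}\otimes f_{r+2})$ is of the form $hf$, a summand vanishes unless $f_{r+3},\dots,f_n$ all equal $i$, and in that case $ipf_j=i$ by $pi=\mathrm{id}$. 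Hence the inductive hypothesis applies to each of these summands, and we substitute the formula \eqref{eq:chi_m} for $\chi_{n-1}$ into every one of them.

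The combinatorial heart of the proof is then the reorganisation of the resulting nested sum, and two facts make it collapse. First, $ip\circ i = i$, so the blocks $(ip)^{\otimes\bullet}$ occurring on the right-hand side of \eqref{eq:chi_m} act as identities on the surviving $i$-type inputs. Second, $ip\circ h = 0$ (this is $i(ph)=0$, one of the additional conditions \eqref{eq:additiona_conditions}); consequently every term in which an $hf$-type factor — in particular the freshly glued $h\nu(f_{r+1}\otimes f_{r+2})$ — is caught by one of these $(ip)^{\otimes\bullet}$ blocks is zero. Discarding these vanishing terms, the contributions regroup according to the position at which the outermost $\nu$ splits its inputs; summing over the index $r$ and invoking \eqref{eq:alpha} once more to recombine the left-hand blocks into $\chi_k$ for the appropriate $k$, one recovers precisely the three families of terms on the right-hand side of \eqref{eq:chi_m} for $\chi_n$, the two boundary terms $(-1)^{n-1}\nu(\mathrm{id}\otimes h\chi_{n-1})$ and $-\nu(h\chi_{n-1}\otimes ip)$ coming from the extreme positions of the split (which correspond to a would-be $h\chi_1$ replaced by $\mathrm{id}$, and to $(ip)^{\otimes 1}$ written simply as $ip$).

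The principal obstacle is bookkeeping rather than anything conceptual. One must correctly match index ranges — the single sum $\sum_{r+t=n-2}$ and the sums $\sum_{a+b=n-1}$ from the inductive hypothesis must collapse onto $\sum_{k+l=n}$ — and track the accumulated signs: the $(-1)^{n-1}$ of the recursion, the $(-1)^t$ of \eqref{eq:alpha}, the signs $(-1)^{a(b+1)}$ and $(-1)^{n-2}$ carried by the inductive hypothesis, and the Koszul signs produced whenever tensor products of maps of nonzero degree (such as $h\chi_a$, which has degree $a-1$) are composed, reordered, or evaluated on homogeneous elements, as dictated by the sign rules of Section \ref{sec:notation}. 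All of these must be shown to combine into exactly the claimed sign $(-1)^{k(l+1)}$; this is the kind of fully explicit sum manipulation advertised in the introduction, and it is where essentially all of the work lies.
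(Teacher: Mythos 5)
Your proposal is correct and follows essentially the same route as the paper's proof: induction on $n$ via the recursion $\chi_n=(-1)^{n-1}\chi_{n-1}h_{n-1}\nu_n$, with equation \eqref{eq:alpha} guaranteeing that the induction hypothesis applies to the placeholder-shaped arguments, equation \eqref{eq:h_nu} performing the split, and $p h=0$, $p i=\mathrm{id}$ from \eqref{eq:additiona_conditions} killing the cross terms in which an $h$-block is hit by $(ip)^{\otimes\bullet}$. The only substantive difference is that the paper actually carries out the sign bookkeeping you defer, absorbing $\chi_k h_k\nu_{k+1}=(-1)^k\chi_{k+1}$ (the recursive definition of $\chi$, rather than \eqref{eq:alpha}, is what effects this recombination) and reindexing $k'=k+1$ to land on the sign $(-1)^{k'(l'+1)}$.
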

\begin{proof} The proof is by induction. For $n=3$ and $f_1, f_{2}, f_{3}\in \{i,hf:f \ \mathrm{arbitary}\}$, equation \eqref{eq:chi_m} is just a special case of equation \eqref{eq:h_nu}:
	\begin{align*}
	    \chi_3(f_1\otimes f_{2}\otimes f_{3}) &= \nu h_2\nu_3(f_1\otimes f_{2}\otimes f_{3})\\
	    &= \nu(-h\nu \otimes ip + \mathrm{id}\otimes h\nu)(f_1\otimes f_{2}\otimes f_{3})\\
	    &= \big[-\nu(h\chi_2\otimes \mathrm{id}) + \nu(\mathrm{id}\otimes h\chi_2)\big](f_1\otimes f_{2}\otimes f_{3}).
	\end{align*}
	Assume the result holds for a fixed $n\geq 3$. Let $f_1,...,f_{n+1}\in \{i,hf:f \ \mathrm{arbitary}\}$ we compute
	\begin{equation*}
		\chi_{n+1}(f_1\otimes...\otimes f_{n+1}) = (-1)^n\chi_n h_n\nu_{n+1} (f_1\otimes...\otimes f_{n+1})
	\end{equation*}
	by equation \eqref{eq:alpha}, we can imply the induction assumption, at the same time we use equation \eqref{eq:h_nu} to rewrite $h_{n}\nu_{n+1}$
	\begin{align*}
		&= (-1)^n\big[\sum_{\substack{k+l=n\\k,l\geq 2}}(-1)^{k(l+1)} \nu(h\chi_k \otimes h\chi_l(ip)^{\otimes l})((-1)^l h_k\nu_{k+1} \otimes (ip)^{\otimes l} + \mathrm{id}^{\otimes k}\otimes h_l\nu_{l+1})\\
		&\ +(-1)^{n-1} \nu(\mathrm{id}\otimes h\chi_{n-1})((-1)^{n-1}h\nu \otimes (ip)^{\otimes n-1} + \mathrm{id}\otimes h_{n-1}\nu_{n})\\
		&\ -\nu(h\chi_{n-1}\otimes ip)(-h_{n-1}\nu_n\otimes ip + \mathrm{id}^{\otimes n-1}\otimes h\nu)\big](f_1\otimes...\otimes f_{n+1})\\
		&=\big[ \sum_{\substack{k+l=n\\k,l\geq 2}}(-1)^{k+l}(-1)^{k(l+1)}(-1)\nu(h\underbrace{\chi_k h_k\nu_{k+1}}_{=(-1)^k\chi_{k+1}}\otimes h\chi_l(ip)^{\otimes l})\\
		&\ +\nu (h\nu \otimes h\chi_{n-1}(ip)^{\otimes n-1})-\nu(\mathrm{id}\otimes h\underbrace{\chi_{n-1}h_{n-1}\nu_n}_{=(-1)^{n-1}\chi_{n}})\\
		&\ +(-1)^n\nu(h\underbrace{\chi_{n-1}h_{n-1}\nu_{n}}_{=(-1)^{n-1}\chi_{n}}\otimes ip)\big](f_1\otimes...\otimes f_{n+1})
	\end{align*}	
    now reorder the sum with $k' := k+1$ and $l':=l$
	\begin{align*}
		&=\big[\sum_{\substack{k'+l'=n+1\\3\leq k'\leq n-1\\2\leq l'\leq n-2}}(-1)^{k'(l'+1)}\nu(h\chi_{k+1}\otimes h\chi_l(ip)^{\otimes l}) + \nu (h\chi_2 \otimes h\chi_{n-1}(ip)^{\otimes n-1})\\
		&\ +(-1)^{n}\nu(\mathrm{id}\otimes h\chi_{n}) - \nu(h\chi_{n}\otimes ip)\big](f_1\otimes...\otimes f_{n+1}).
	\end{align*}
\end{proof}	

The use of the placeholder function a trick to be able to imply the induction assumption right away. The only case of equation \eqref{eq:chi_m} we are interested in is $f_1 = ...= f_{n} =i$, in this case equation \eqref{eq:chi_m} simplifies as
\begin{align} \label{eq:merkulov_minimal}
	 \chi_n \circ i^{\otimes n} &= \big[ \sum_{\substack{k+l=n\\k,l\geq 2}}(-1)^{k(l+1)}\nu(h\chi_k \otimes h\chi_l )\nonumber \\
		 &\ +(-1)^{(n-1)}\nu(\mathrm{id}\otimes h\chi_{n-1})-(h\chi_{n-1}\otimes \mathrm{id})\big]\circ i^{\otimes n}.
\end{align}
This formula allows an explicit proof of the $A_\infty$-relations, see \cite{merkulov}. We give a proof of Lemma 3.2 from \cite{merkulov} to convince the reader that the signs in equation \eqref{eq:merkulov_minimal} are correct. Formally set ``$\lambda_1:=-h^{-1}$'', $\lambda_2:= \nu$, and for $n\geq 3$ set
\begin{equation} \label{eq:lambda}
	\lambda_n := \sum_{\substack{k+l=n\\k,l\geq 1}} (-1)^{k(l+1)}\nu(h\lambda_k\otimes h\lambda_l).
\end{equation}
\begin{lemma}\textnormal{\cite{merkulov}} The following equation is true for all $n\geq 3$ 
\[
\Phi_n= \sum_{\substack{j+l+k=n\\2\leq l\leq n-1\\j,k\geq 0}} = (-1)^{jl+k} \lambda_{j+k+1}(\mathrm{id}^{\otimes j}\otimes \lambda_l\otimes \mathrm{id}^{\otimes k})=0.
\]
\end{lemma}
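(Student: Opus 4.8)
The plan is to recognize $\Phi_n$ as the obstruction to $(H_*(A), \lambda_\bullet)$ (with the formal symbol $\lambda_1 = -h^{-1}$) being an $A_\infty$-algebra, and to derive its vanishing purely from the recursive definition \eqref{eq:lambda} together with the sign bookkeeping already developed in the excerpt. The key structural fact is that the formal map $L := \sum_{n\geq 1}\lambda_n$ on $T^c$ of the relevant space should satisfy $L \circ L = 0$ on the nose once one substitutes the recursion; the coefficient $\Phi_n$ is exactly the weight-$n$ component of $L\circ L$ evaluated through the projection $\pi$, so it suffices to show each such component vanishes. I would set this up by mimicking the computation in Proposition \ref{prop:(A,m)==d^2}, where $\pi\circ d^2$ was expanded as $\sum_{n=r+j+t}(-1)^{rj+t}m_{r+t+1}(\mathrm{id}^{\otimes r}\otimes m_j\otimes \mathrm{id}^{\otimes t})$; the present sum has the identical shape with $m$ replaced by $\lambda$ and the range of $j=l$ restricted to $2\leq l\leq n-1$ (the $l=1$ and $l=n$ terms being absorbed into the formal $\lambda_1 = -h^{-1}$ convention).

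First I would substitute the recursion \eqref{eq:lambda} for $\lambda_{j+k+1}$ whenever $j+k+1 \geq 3$, and separately for $\lambda_l$ whenever $l\geq 3$, while treating the boundary cases $\lambda_1, \lambda_2 = \nu$ directly. Each term of $\Phi_n$ then becomes a nested wedge expression of the form $\nu(h\lambda_a\otimes h\lambda_b)$ with various identity/$\lambda$ insertions, carrying a product of signs of the type $(-1)^{k(l+1)}$ and $(-1)^{jl+k}$. Next I would regroup these terms by the \emph{inner} structure — i.e. by which two sub-trees $\lambda_a, \lambda_b$ appear under the outermost $\nu$ — and show that within each such group the surviving terms organize into an inner copy of $\Phi_a$ or $\Phi_b$ (times an overall sign and a $\nu(h(-)\otimes h(-))$ wrapper), plus cancelling boundary terms coming from the $\lambda_1 = -h^{-1}$ placeholder, which by construction kills the extra $h$: $h\lambda_1 = -\mathrm{id}$. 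An induction on $n$ (base case $n=3$ checked by hand against \eqref{eq:chi_m}/the $n=3$ associativity relation) then finishes it: each inner $\Phi_a$ vanishes by the inductive hypothesis, and the boundary terms cancel in pairs.

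The main obstacle, as the author flags repeatedly in the introduction, will be the sign bookkeeping: verifying that $(-1)^{k(l+1)}$ from the recursion combines correctly with $(-1)^{jl+k}$ from the $A_\infty$-relation and with the Koszul signs picked up when commuting $h$ past $ip$-factors (as in \eqref{eq:h_nu}), so that the regrouped terms genuinely reconstitute $(-1)^{(\text{stuff})}\Phi_a$ with the \emph{same} sign convention. A secondary subtlety is handling the formal symbol $\lambda_1 = -h^{-1}$ consistently — it only ever appears sandwiched as $h\lambda_1 = -\mathrm{id}$ or via $\lambda_1 h = -\mathrm{id}$, so one must check that every occurrence in the expansion is indeed of this cancellable form and never as a bare $\lambda_1$. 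Once the sign lemma is isolated (essentially a restatement of the identities $\alpha_n(-1)^n = \alpha_{n+1}$ and the manipulations in \eqref{eq:h_nu} already recorded), the combinatorial regrouping is the same reordering-of-nested-sums argument used elsewhere in Section \ref{sec:explicit_proofs}, and the induction goes through.
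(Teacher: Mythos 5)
Your plan is essentially the paper's proof: the paper also argues by induction on $n$ (base case $\Phi_3=\nu(\nu\otimes\mathrm{id}-\mathrm{id}\otimes\nu)=0$), expands the outer $\lambda_{j+k+1}$ via the recursion \eqref{eq:lambda}, regroups by which side of the outermost $\nu(h\lambda_s\otimes h\lambda_t)$ the inner $\lambda_l$ lands on, cancels the boundary terms where $h\lambda_1=-\mathrm{id}$ appears (using associativity of $\nu$), and arrives at the recursion $\Phi_n=\sum_s(-1)^{s(n-s)}\nu(h\lambda_s\otimes\Phi_{n-s})+\sum_t(-1)^{t(n-t+1)+n+1}\nu(\Phi_{n-t}\otimes h\lambda_t)$. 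The only caveat is that to reconstitute the inner copies of $\Phi_{n-s}$ you must \emph{not} also expand the inner $\lambda_l$ (except in the boundary cancellation step), but this is a minor imprecision in an otherwise matching strategy.
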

\begin{proof}
	The proof in \cite{merkulov} is an induction proof and goes as follows: Let $n\geq 4$ first split off the extreme values of $j,k$:
	\begin{align} \label{eq:temp_lem_3.2_PHI}
		\Phi_n &= \sum_{\substack{j+l=n\\j\geq 1,l\geq 2}}(-1)^{jl}\lambda_{j+1}(\mathrm{id}^{\otimes j}\otimes \lambda_l)+\sum_{\substack{l+k=n\\l\geq 2, k\geq 1}}(-1)^{k}\lambda_{k+1}( \lambda_l\otimes \mathrm{id}^{\otimes k})\nonumber\\
		&+ \sum_{\substack{j+l+k=n\\ l\geq 2, j,k\geq 1}} (-1)^{jl+k} \lambda_{j+k+1}(\mathrm{id}^{\otimes j}\otimes \lambda_l\otimes \mathrm{id}^{\otimes k}).
	\end{align}
	Expand the first and second sum with the definition of $\lambda_{j+1}, \lambda_{k+1}$, respectively. The first two sums become
	\begin{align*}
		&\ \sum_{\substack{j+l=n\\j\geq 1,l\geq 2}} (-1)^{jl}\sum_{\substack{s+t= j+1\\s,t\geq 1}} (-1)^{s(t+1)}\nu(h\lambda_s\otimes h\lambda_t)(\mathrm{id}^{j}\otimes \lambda_l)\\
		&\ +\sum_{\substack{l+k=n\\l\geq 2,k\geq 1}} (-1)^{k}\sum_{\substack{s+t= k+1\\s,t\geq 1}} (-1)^{s(t+1)}\nu(h\lambda_s\otimes h\lambda_t)(\lambda_l\otimes \mathrm{id}^{k})\\
		&=\sum_{\substack{j+l=n\\j\geq 1,l\geq 2}} (-1)^{jl}\sum_{\substack{s+t= j+1\\s,t\geq 1}} (-1)^{s(t+1)}\nu(h\lambda_s\otimes h\lambda_t (\mathrm{id}^{\otimes t-1}\otimes \lambda_l))\\
		&\ +\sum_{\substack{l+k=n\\l\geq 2,k\geq 1}} (-1)^{k}\sum_{\substack{s+t= k+1\\s,t\geq 1}} (-1)^{s(t+1)}\nu(h\lambda_s(\lambda_l\otimes \mathrm{id}^{\otimes s-1})\otimes h\lambda_t):= \spadesuit .
	\end{align*}
	Consider the case $s=1$ and $t=k$ for the second sum:
	\begin{align*}
		&-\sum_{\substack{l+k=n\\l\geq , k\geq 1}}(-1)^{k+(l+1)(k+1)} \nu(\lambda_l\otimes h\lambda_k)\\
		&= \sum_{\substack{l+k=n\\l\geq 2, k\geq 1}}(-1)^{l(k+1)}\sum_{\substack{s+t=l\\s,t\geq 1}} (-1)^{s(t+1)}\underbrace{\nu(\nu(h\lambda_s\otimes h\lambda_t)\otimes h\lambda_k)}_{=\nu(h\lambda_s \otimes \nu (h\lambda_t \otimes h\lambda_k))}\\
		&= \sum_{k=1}^{n-2} (-1)^{(n-k)(k+1)} \sum_{s=1}^{n-k}(-1)^{s(n-k-s+1)} \nu(h\lambda_s\otimes \nu(h\lambda_{n-k-s}\otimes h\lambda_k))\\
		&= \sum_{s=1}^{n-2} (-1)^{s(n-s)} \sum_{k=1}^{n-s-1} (-1)^{(n-k-s)(k+1)} \nu(h\lambda_s\otimes \nu(h\lambda_{n-k-s}\otimes h\lambda_k)) \\
		&= \sum_{s=1}^{n-2}(-1)^{s(n-s)} \sum_{k+l=n-s} (-1)^{l(k+1)} \nu(h\lambda_s\otimes \nu(h\lambda_{l}\otimes h\lambda_k))\\
		&= \sum_{\substack{s+t=n\\s\geq 1,t\geq 2}} (-1)^{st}\nu(h\lambda_s\otimes h\lambda_t).
 	\end{align*}
	Notice that this sum cancels with the case $s=j$ and $t=1$ of the first sum in $\spadesuit$. Thus 
	\begin{align*}
		\spadesuit &= \sum_{\substack{j+l=n\\j\geq 1,l\geq 2}} (-1)^{jl}\sum_{\substack{s+t= j+1\\s\geq 1, t\geq 2}} (-1)^{s(t+1)}\nu(h\lambda_s\otimes h\lambda_t (\mathrm{id}^{\otimes t-1}\otimes \lambda_l))\\
		&\ +\sum_{\substack{l+k=n\\l\geq 2,k\geq 1}} (-1)^{k}\sum_{\substack{s+t= k+1\\s\geq 2,t\geq 1}} (-1)^{s(t+1)}\nu(h\lambda_s(\lambda_l\otimes \mathrm{id}^{\otimes s-1})\otimes h\lambda_t)\\
		&= \sum_{l=2}^{n-1}(-1)^{l(n-l)}\sum_{s=1}^{n-l-1} (-1)^{s(n-s-l+2)}\nu(h\lambda_s\otimes h\lambda_{n-s-l+1}(\mathrm{id}^{\otimes n-s-l}\otimes \lambda_l))\\
		&\ +\sum_{l=2}^{n-1}(-1)^{n-l}\sum_{t=1}^{n-l-1}(-1)^{(n-t-l+1+l)(t+1)}\nu(h\lambda_{n-l-t+1}(\lambda_l\otimes \mathrm{id}^{\otimes n-l-t})\otimes h\lambda_t)\\
		&= \sum_{s=1}^{n-3}(-1)^{s(n-s)}\sum_{l=2}^{n-s-1}(-1)^{l(n-l)+ls} \nu(h\lambda_s\otimes h\lambda_{n-s-l+1}(\mathrm{id}^{\otimes n-s-l}\otimes \lambda_l))\\
		&\ +\sum_{t=1}^{n-3}\sum_{l=2}^{n-1-t} (-1)^{n+1+n-t+1t(n-t+)}\nu(h\lambda_{n-l-t+1}(\lambda_l\otimes \mathrm{id}^{\otimes n-l-t})\otimes h\lambda_t).
	\end{align*}
	In the first sum set $j:= n-l-s$ and in the second sum set $k:=n-l-t$:
	\begin{align} \label{eq:temp_lem_3.2}
		&= \sum_{s=1}^{n-3}(-1)^{s(n-s)}\sum_{\substack{j+l=n-s\\j\geq 1, l\geq 2}} (-1)^{lj} \nu(h\lambda_s\otimes h\lambda_{j+1}(\mathrm{id}^{\otimes j}\otimes \lambda_l))\nonumber \\
		&\ +\sum_{t=1}^{n-3}(-1)^{t(n-1+1)+n+1} \sum_{\substack{l+k=n-t\\l\geq 2, k\geq 1}} (-1)^k \nu(h\lambda_{k+1}(\lambda_l\otimes \mathrm{id}^{\otimes k})\otimes h\lambda_t) .
	\end{align}
	Now turn back to equation \eqref{eq:temp_lem_3.2_PHI} and compute the sum that contains no extreme values of $j$ and $k$:
	\begin{align*}
		&\ \sum_{\substack{j+l+k=n\\ l\geq 2, j,k\geq 1}} (-1)^{jl+k} \lambda_{j+k+1}(\mathrm{id}^{\otimes j}\otimes \lambda_l\otimes \mathrm{id}^{\otimes k})\\
		&= \sum_{\substack{j+l+k=n\\ l\geq 2, j,k\geq 1}} (-1)^{jl+k} \sum_{\substack{s+t=j+k+1\\s,t\geq 1}} (-1)^{s(t+1)}\nu(h\lambda_s\otimes h\lambda_t) (\mathrm{id}^{\otimes j}\otimes \lambda_l\otimes \mathrm{id}^{\otimes k}).
	\end{align*}
	Split the second sum into $1\leq s\leq j$ and $j+1\leq s\leq j+k\Leftrightarrow 1\leq t\leq k$:
	\begin{align*}
		&= \sum_{l=2}^{n-2}\sum_{k=1}^{n-1-l}(-1)^{l(n-l-k)}\sum_{s=1}^{n-k-l}(-1)^{s(n-l-s+2)}\\
		&\qquad\qquad\qquad\nu(h\lambda_s\otimes h\lambda_{n-l-s+1}(\mathrm{id}^{\otimes n-k-l-s}\otimes \lambda_l \otimes \mathrm{id}^{\otimes k})\\
		&\ +\sum_{l=2}^{n-2}\sum_{j=1}^{n-1-l}(-1)^{jl+n-j-l}\sum_{t=1}^{n-l-j}(-1)^{(n-l-t+1)(t+1)}(-1)^{l(t+1)}\\
		&\qquad\qquad\qquad \nu(h\lambda_{n-l-t+1}(\mathrm{id}^{\otimes j}\otimes \lambda_l\otimes \mathrm{id}^{\otimes n-l-j-t})\otimes h\lambda_t).
	\end{align*}
	The sign $(-1)^{l(t+1)}$ is due to the Koszul sign convention. Next permute the sums:
	\begin{align*}
		&= \sum_{s=1}^{n-3}(-1)^{s(n-s)}\sum_{l=2}^{n-1-s}\sum_{k=1}^{n-s-l} (-1)^{l(n-l-k-s)+k}\\
		&\qquad\qquad\qquad\nu(h\lambda_s\otimes h\lambda_{n-l-s+1}(\mathrm{id}^{\otimes n-k-l-s}\otimes \lambda_l \otimes \mathrm{id}^{\otimes k})\\
		&\ +\sum_{t=1}^{n-3}(-1)^{t(n-t+1)+n+1}\sum_{l=2}^{n-1-t}\sum_{j=1}^{n-t-l}(-1)^{lj+n-j-l-t} \\
		&\qquad\qquad\qquad \nu(h\lambda_{n-l-t+1}(\mathrm{id}^{\otimes j}\otimes \lambda_l\otimes \mathrm{id}^{\otimes n-l-j-t})\otimes h\lambda_t).
	\end{align*}
	Set $j':= n-k-l-s$ in the first sum and $k':=n-l-j-t$ in the second sum:
	\begin{align*}
		&= \sum_{s=1}^{n-3}(-1)^{s(n-s)}\sum_{\substack{j'+l+k=n-s\\ j'\geq 0, l\geq 2,k\geq 1}} (-1)^{l j'+k} \nu(h\lambda_s\otimes h\lambda_{j'+t+1}(\mathrm{id}^{\otimes j'}\otimes \lambda_l \otimes \mathrm{id}^{\otimes k})\\
		&\ +\sum_{t=1}^{n-3}(-1)^{t(n-t+1)+n+1}\sum_{\substack{j+l+k'=n-t\\j\geq 1, l\geq 2,k\geq 0}}(-1)^{jl+k'} \nu(h\lambda_{j+k'+1}(\mathrm{id}^{\otimes j}\otimes \lambda_l\otimes \mathrm{id}^{\otimes k'})\otimes h\lambda_t).
	\end{align*}
	In order to simplify this, using the definition of $\Phi_n$, notice that the first sum is missing the case $k=0$ and the second sum the case $j=0$ these two cases are provided by equation \eqref{eq:temp_lem_3.2}. Putting everything back into equation \eqref{eq:temp_lem_3.2_PHI} we get
	\begin{equation*}
		\Phi_n = \sum_{s=1}^{n-3} (-1)^{s(n-s)} \nu(h\lambda_s\otimes \Phi_{n-s})+ \sum_{t=1}^{n-3}(-1)^{t(n-t+1)+n+1}\nu(\Phi_{n-t}\otimes h\lambda_t)
	\end{equation*}
	for all $n\geq 4$. Now simply observe that for $n=3$
	\[
	\Phi_ 3 = \nu(\nu\otimes \mathrm{id}- \mathrm{id}\otimes \nu)=0.
	\]
	Thus by the above inductive formula $\Phi_n=0$ for all $n\geq 3$. 
\end{proof}

This constitutes the main part of the proof of the following Theorem in \cite{merkulov}.
\begin{theorem} \label{thm:merkulov} \textnormal{\cite{merkulov}}
Let $(A,d,\nu)$ be a dg-algebra and
\[
\begin{tikzcd}
		\arrow[loop left, "h"] (A,d) \arrow[r, shift left, "p"] & \arrow[l, shift left, "i"](H_*(A),0)
\end{tikzcd}
\]
a choice of deformation retract. Then $m_1 := 0$ and $m_n:= p\circ \lambda_n \circ i^{\otimes n}$ for $n\geq 2$ defines an $A_\infty$-structure on $H_*(A)$. Moreover, $i_1:=i$ and $i_n:= \lambda_n \circ i^{\otimes n}$ for $n\geq 2$ defines a minimal model
\[
i:(H_*(A),m)\rightarrow (A,d).
\]
\end{theorem}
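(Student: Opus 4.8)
The plan is to check the two assertions of the theorem separately: first that $m_1:=0$ together with $m_n:=p\lambda_n i^{\otimes n}$ for $n\geq 2$ satisfies the higher associativity conditions \eqref{eq:higher_assoziativity_condition}; second that $i_1:=i$ together with $i_n:=\lambda_n i^{\otimes n}$ for $n\geq 2$ satisfies the morphism conditions \eqref{eq:morphism} for a map into the dg-algebra $(A,d)$. Granting both, $i$ is automatically a minimal model: $i_1=i$ is a chain map (since $di=0$) inducing the identity on homology, because $i$ is cycle-choosing and $pi=\mathrm{id}$; and $m_1=0$ while $m_2=p\nu(i\otimes i)$ is the induced product, exactly as computed in the proof of Theorem \ref{thm:existance_minimal_model}. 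So the quasi-isomorphism and induced-product parts come for free.

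For the first assertion I would not verify the relations directly but reduce to the perturbation lemma. Expanding the recursion \eqref{eq:lambda}, separating the $k=1$ and $l=1$ summands from the $k,l\geq 2$ summands and using $h\lambda_1=-\mathrm{id}$, one finds for $n\geq 3$
\[
\lambda_n=\sum_{\substack{k+l=n\\k,l\geq 2}}(-1)^{k(l+1)}\nu(h\lambda_k\otimes h\lambda_l)+(-1)^{n-1}\nu(\mathrm{id}\otimes h\lambda_{n-1})-\nu(h\lambda_{n-1}\otimes\mathrm{id}).
\]
Comparing this with equation \eqref{eq:merkulov_minimal} and using induction on $n$ — base case $\lambda_2=\nu=\chi_2$, inductive step substituting $h\lambda_k i^{\otimes k}=h\chi_k i^{\otimes k}$ for $k<n$ — gives $\lambda_n\circ i^{\otimes n}=\chi_n\circ i^{\otimes n}$ for every $n\geq 2$. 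Since Theorem \ref{thm:existance_minimal_model} already shows $m_1=0$, $m_n=p\chi_n i^{\otimes n}$ defines an $A_\infty$-structure (it is the coderivation $d_\infty$ of the perturbation lemma transported back through Proposition \ref{prop:(A,m)==d^2}), the same holds with $\chi_n$ replaced by $\lambda_n$. Alternatively, following \cite{merkulov} one may deduce this directly from the identity $\Phi_n=0$ of the preceding lemma.

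For the second assertion I would substitute $i_n=\lambda_n i^{\otimes n}$ and $m_n=p\lambda_n i^{\otimes n}$ into equation \eqref{eq:morphism}, with the target $A_\infty$-structure on $A$ taken to be $d$, $\nu$ and zero in all higher arities, so that the right-hand side of \eqref{eq:morphism} collapses to $d\,i_n$ plus the finitely many $\nu$-terms coming from $r=2$. One then rewrites every occurrence of $ip$ produced by the compositions $i\circ m_s$ via $ip=\mathrm{id}-dh-hd$, and simplifies using $pi=\mathrm{id}$, $pd=0=di$, $h^2=0$, $hi=0$ and $ph=0$. After a careful re-indexing of the resulting nested sums, the difference of the two sides of \eqref{eq:morphism} should collapse onto a combination of the operators $\Phi_m$ of the preceding lemma, hence vanish; this is precisely the computation attributed to \cite{merkulov}, whose principal ingredient ($\Phi_n=0$) has just been established.

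The step I expect to be the main obstacle is this last one: organising the large nested sums so that the surviving terms reassemble into Merkulov's $\Phi_m$, while correctly tracking the Koszul and suspension signs arising whenever $\nu$, $h$ and the shift maps $s^{\pm 1}$ are permuted past one another. By contrast, the induction identifying $\lambda_n\circ i^{\otimes n}$ with $\chi_n\circ i^{\otimes n}$, the appeal to Theorem \ref{thm:existance_minimal_model}, and the check that $i_1$ is a quasi-isomorphism are all routine.
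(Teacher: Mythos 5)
Your treatment of the first assertion is correct and is in fact the route the paper takes: the preceding proposition (equation \eqref{eq:chi_m}, specialised to $f_1=\dots=f_n=i$ in \eqref{eq:merkulov_minimal}) establishes exactly the recursion you need, and splitting off the $k=1$ and $l=1$ terms of \eqref{eq:lambda} via $h\lambda_1=-\mathrm{id}$ gives $\lambda_n i^{\otimes n}=\chi_n i^{\otimes n}$ by induction, whence the $A_\infty$-relations for $m_n=p\lambda_n i^{\otimes n}$ follow from Theorem \ref{thm:existance_minimal_model}. (The paper itself does not complete the argument either; after proving $\Phi_n=0$ it defers the remaining bookkeeping to \cite{merkulov}.)

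The genuine gap is in your second assertion, and it is not merely that the computation is left as a sketch: the verification of \eqref{eq:morphism} for $i_n:=\lambda_n i^{\otimes n}$ \emph{as literally written} cannot close. Already at $n=2$ the morphism equation forces $d\,i_2=(ip-\mathrm{id})\nu(i\otimes i)=-dh\nu(i\otimes i)$ (using $d\nu(i\otimes i)=0$), whereas $d\bigl(\lambda_2 i^{\otimes 2}\bigr)=d\nu(i\otimes i)=0$; so the candidate $i_2=\nu(i\otimes i)$ fails and one needs $i_2=-h\nu(i\otimes i)$. The higher components of the inclusion must carry an outer homotopy, $i_n=\pm h\lambda_n i^{\otimes n}$ --- this is what Merkulov's formulas give, and it is also what the perturbation lemma actually produces: expanding $I_t=\sum_{k\geq 0}(-Ht)^kI$ on weight $n$ shows $(I_\infty)_n=\pm h\chi_n i^{\otimes n}$, the outermost operation being $h$, not $\nu$. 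Once you have $\lambda_n i^{\otimes n}=\chi_n i^{\otimes n}$ from your first step, the cleanest repair is to read off \emph{both} $m_n$ and $i_n$ (with the outer $h$) from Theorem \ref{thm:existance_minimal_model}, so that no separate reduction of the nested sums to $\Phi_m=0$ is needed; if instead you insist on the direct verification you describe, you must first correct the formula for $i_n$, or the $n=2$ case already refutes the computation you are setting up.
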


\subsection{The Projection Map I}
The purpose of this subsection is to give a direct proof of the fact that there exists an $A_\infty$-morphism in the opposite direction
\[
p:(A,d)\rightarrow (H_*(A),m)
\]
such that the first component $p_1$ is simply the projection map $p: (A,d)\rightarrow (H_*(A),0)$.

\vspace{4mm}

We set up some notation: Let $d_1 := d$ and for $n\geq 2$, $d_n:A^{\otimes n} \rightarrow A^{\otimes n}$ is given by
\[
d_n:= (-1)^{n-1} \sum_{r+1+t=n} \mathrm{id}^{\otimes r}\otimes d\otimes \mathrm{id}^{\otimes t}.
\]
It is immediate that for $n\geq 2$ and $i,j\geq 1$ such that $i+j=n$ we have
\begin{equation} \label{eq:d_n_split}
	d_n = (-1)^{i}\mathrm{id}^{\otimes i}\otimes d_j +(-1)^{j}d_i\otimes \mathrm{id}^{\otimes j}.
\end{equation}
Before getting started on the projection map let us show some properties of the $h_n$, $\nu_n$, and $d_n$ maps.
\begin{lemma} \label{lem:nu_nd_n=d_nnu_n}
	For all $n\geq 2$
	\begin{equation} \label{eq:nu-d=d-nu}
		\nu_n d_{n}+d_{n-1} \nu_n =0 .
	\end{equation}
\end{lemma}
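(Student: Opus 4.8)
The plan is to reduce the identity to the $n=2$ higher associativity relation of the underlying dg-algebra, i.e.\ to the fact that $d$ is a derivation of $\nu$, $d\nu = \nu(\mathrm{id}\otimes d + d\otimes\mathrm{id})$, by recognizing $d_n$ and $\nu_n$ through standard constructions on tensor powers of the chain complex $(A,d)$. Write $\delta_k := \sum_{r+1+t=k}\mathrm{id}^{\otimes r}\otimes d\otimes\mathrm{id}^{\otimes t}$ for the usual differential on the $k$-fold tensor product of $(A,d)$ with itself; directly from the definitions one has $d_1 = \delta_1 = d$ and $d_k = (-1)^{k-1}\delta_k$ for $k\geq 2$.

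The key step is to show that for every $r+t = n-2$ the map $\mathrm{id}^{\otimes r}\otimes\nu\otimes\mathrm{id}^{\otimes t}\colon A^{\otimes n}\to A^{\otimes(n-1)}$ is a chain map from $(A^{\otimes n},\delta_n)$ to $(A^{\otimes(n-1)},\delta_{n-1})$, that is
\[
\delta_{n-1}\,(\mathrm{id}^{\otimes r}\otimes\nu\otimes\mathrm{id}^{\otimes t}) = (\mathrm{id}^{\otimes r}\otimes\nu\otimes\mathrm{id}^{\otimes t})\,\delta_n.
\]
This holds because $\nu$ itself is a chain map $(A^{\otimes 2},\delta_2)\to (A,d)$ — precisely the $n=2$ relation — and the tensor product of degree-zero chain maps is a chain map, once one notes that the differential on $A^{\otimes n} = A^{\otimes r}\otimes A^{\otimes 2}\otimes A^{\otimes t}$ is $\delta_n$ and the one on $A^{\otimes(n-1)} = A^{\otimes r}\otimes A\otimes A^{\otimes t}$ is $\delta_{n-1}$, independently of the parenthesization. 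Since the coefficients $(-1)^t$ appearing in $\nu_n = \sum_{r+t=n-2}(-1)^t\,\mathrm{id}^{\otimes r}\otimes\nu\otimes\mathrm{id}^{\otimes t}$ are scalars, summing gives $\delta_{n-1}\nu_n = \nu_n\delta_n$.

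It then remains only to reinstate the signs. For $n\geq 3$ we have $d_n = (-1)^{n-1}\delta_n$ and $d_{n-1} = (-1)^{n-2}\delta_{n-1}$, so
\[
\nu_n d_n + d_{n-1}\nu_n = (-1)^{n-1}\nu_n\delta_n + (-1)^{n-2}\delta_{n-1}\nu_n = \bigl((-1)^{n-1} + (-1)^{n-2}\bigr)\nu_n\delta_n = 0,
\]
while for $n=2$ one has $\nu_2 = \nu$, $d_2 = -\delta_2$, $d_1 = d$, and $\nu_2 d_2 + d_1\nu_2 = -\nu\delta_2 + d\nu = 0$ by the $n=2$ relation once more. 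The only genuine work sits in the key step: the Koszul-sign bookkeeping verifying that when the distinguished $d$ inside $\delta_n$ hits one of the two tensor slots fed to $\nu$, the two contributions combine, through $\nu(d\otimes\mathrm{id}+\mathrm{id}\otimes d) = d\nu$, into exactly the term of $\delta_{n-1}$ that applies $d$ to the $\nu$-output, and that on all other slots the signs match on the two sides. I expect this sign check to be the main (and essentially the only) obstacle.

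A more computational alternative, closer in spirit to this section, is a direct induction on $n$: the base case $n=2$ is again the $n=2$ relation, and for the inductive step one splits $d_n$ by \eqref{eq:d_n_split} and $\nu_n$ by the analogous identity $\nu_n = (-1)^n\,\nu\otimes\mathrm{id}^{\otimes(n-2)} + \mathrm{id}\otimes\nu_{n-1}$ (valid for $n\geq 3$, proved directly from the definition), expands both $\nu_n d_n$ and $d_{n-1}\nu_n$ into four summands, applies the $n=2$ relation to the summand of the form $\nu(d\otimes\mathrm{id}+\mathrm{id}\otimes d)\otimes\mathrm{id}^{\otimes(n-2)}$ and the inductive hypothesis to the summand $\mathrm{id}\otimes(\nu_{n-1}d_{n-1})$, and checks that the remaining four pairs of terms cancel — after splitting $d_{n-1}$ once more via \eqref{eq:d_n_split} to align it with the split of $\nu_n$. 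Here too the obstacle is entirely the sign bookkeeping.
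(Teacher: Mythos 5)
Your main argument is correct, and it takes a genuinely different route from the paper. The paper proves the lemma by a direct induction on $n$: the base case is the derivation property of $d$, and the inductive step splits $\nu_{n+1}$ as $-\nu_n\otimes\mathrm{id}+\mathrm{id}^{\otimes n-1}\otimes\nu$ and $d_{n+1}$ via \eqref{eq:d_n_split}, expands, and regroups --- essentially your ``computational alternative'' (up to splitting off the last rather than the first tensor factor). Your primary argument instead identifies $d_k=(-1)^{k-1}\delta_k$ with $\delta_k$ the standard differential on the $k$-fold tensor product of $(A,d)$, observes that each summand $\mathrm{id}^{\otimes r}\otimes\nu\otimes\mathrm{id}^{\otimes t}$ is a tensor product of degree-zero chain maps and hence a chain map $(A^{\otimes n},\delta_n)\to(A^{\otimes(n-1)},\delta_{n-1})$, and then reads the lemma off from the sign discrepancy $(-1)^{n-1}+(-1)^{n-2}=0$; the $n=2$ case folds into the same computation since $d_1=\delta_1$. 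This is cleaner: it isolates the single conceptual input (that $\nu$ is a chain map, i.e.\ the derivation property) and replaces the inductive sign-chasing with the standard Koszul-convention fact that tensor products of degree-zero chain maps commute with the tensor-product differential, which follows routinely from the composition rule for tensor products of graded maps stated in Section~\ref{sec:notation}. What the paper's induction buys in exchange is that it stays entirely within the explicit formulas already set up for the later computations and exhibits the splitting identities that are reused there. Both proofs are valid.
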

\begin{proof}
	The case $n =2$ is true because $d$ is a derivation. Assume this holds for a fixed $n\geq 2$ then
	\begin{align*}
		\nu_{n+1} d_{n+1} &=( -\nu_n\otimes \mathrm{id} +\mathrm{id}^{\otimes n-1}\otimes \nu)d_{n+1}\\
		&\overset{\eqref{eq:d_n_split}}{=} -\nu_n\otimes \mathrm{id} ((-1)^n \mathrm{id}^{\otimes n}\otimes d -d_n \otimes \mathrm{id})\\
		&\ + \mathrm{id}^{\otimes n-1}\otimes \nu ( (-1)^{n-1}\mathrm{id}^{\otimes n-1} \otimes d_2 +d_{n-1}\otimes \mathrm{id}^{\otimes 2})\\
		&= (-1)^{n-1}\nu_n \otimes d + \nu_n d_n\otimes \mathrm{id} +(-1)^{n-1} \mathrm{id}^{\otimes n-1}\otimes \nu d_2 +d_{n-1} \otimes \nu\\
		&\overset{\mathrm{i.a.}}{=} (-1)^{n-1}\nu_n \otimes d - d_{n-1}\nu_n\otimes \mathrm{id} +(-1)^{n} \mathrm{id}^{\otimes n-1}\otimes d\nu +d_{n-1} \otimes \nu \\
		&= ((-1)^{n-1}\mathrm{id}^{\otimes n-1}\otimes d - d_{n-1}\otimes \mathrm{id})\underbrace{(\nu_n\otimes \mathrm{id}- \mathrm{id}^{\otimes n-1}\otimes \nu)}_{=-\nu_{n+1}}\\
		&= - d_n\nu_{n+1}.
	\end{align*}
\end{proof}
Equation \eqref{eq:deformation_retract_dgca} tells us that
\begin{equation}\label{eq:id-ip=d_nh_n}
	\mathrm{id}^{\otimes n} - (ip)^{\otimes n} = (-1)^{n-1} (h_n d_n + d_n h_n).
\end{equation}
Let us give a proof of equation \eqref{eq:id-ip=d_nh_n} that does not use any of the results from section \ref{sec:pertub_lemma}; similar to the proof of Lemma \ref{lem:nu_nd_n=d_nnu_n} by induction on $n$. For $n = 1$, equation \eqref{eq:id-ip=d_nh_n} is the property of the deformation retract. Assume equation \eqref{eq:id-ip=d_nh_n} holds for a fixed $n\geq 1$ and compute
\begin{align*}
		&\ (-1)^nh_{n+1}d_{n+1} \\
		&= (-1)^n(h_n\otimes ip + \mathrm{id}\otimes h)(-d_n \otimes \mathrm{id}+(-1)^n \mathrm{id}^{\otimes n} \otimes d)\\
		&= (-1)^{n-1} h_nd_n\otimes ip +(-1)^n d_n \otimes h + \mathrm{id}^{\otimes n}\otimes hd\\
		&\overset{\mathrm{i.a.}}{=} ((ip)^{\otimes n} -\mathrm{id}^{\otimes n} + (-1)^n d_nh_n)\otimes ip +(-1)^n d_n \otimes h + \mathrm{id}^{\otimes n}\otimes (\mathrm{id}- ip-dh)\\
		&=\mathrm{id}^{\otimes n+1} - (ip)^{\otimes n+1} +(-1)^nd_nh_n\otimes ip +(-1)^{n} d_n \otimes h - \mathrm{id}^{\otimes n}\otimes dh - \underbrace{h_n\otimes dip}_{=0}\\
		&= \mathrm{id}^{\otimes n+1} - (ip)^{\otimes n+1} -(-1)^n \underbrace{(-d_n\otimes \mathrm{id} +(-1)^n \mathrm{id}^{\otimes n} \otimes d)}_{= d_{n+1}} (h_n \otimes ip + \mathrm{id}^{\otimes n} \otimes h)\\
		&= \mathrm{id}^{\otimes n+1} - (ip)^{\otimes n+1} - (-1)^n d_{n+1}h_{n+1}.
\end{align*}
The projection map from Section \ref{sec:pertub_lemma}
\[
P_t = \sum_{n\geq 0} P (-tH)^n
\]
gives us an $A_\infty$-morphism $p:(A,d)\rightarrow (H_*(A),m)$ with $p_1 = p$ and for $n\geq 2$
\begin{equation} \label{eq:p_n_definition}
    p_n = \alpha_{n-1} p \nu_2 h_2... \nu_n h_n.
\end{equation}
From which we get the following recursive formula: $p_1 = p$ and for $n\geq 2$
\[
p_n = (-1)^n p_{n-1}\nu_n h_n.
\]
Recall that $\chi_2 = \nu$ and $\chi_n= (-1)^{n-1}\chi_{n-1}h_{n-1}\nu_n$ for $n\geq 3$. Observe that 
\[
p_n = (-1)^n\chi_n h_n
\]
and for $n\geq 3$
\[
m_n = p\chi_n i^{\otimes n} = (-1)^{n-1}p\chi_{n-1}h_{n-1}\nu_n i^{\otimes n} = p_{n-1} \nu_{n} i^{\otimes n}.
\]
Thus for all $n\geq 2$ we have
\[
m_n = p_{n-1} \nu_{n} i^{\otimes n}.
\]
Before proving the main result  of  this subsection we need one more technical lemma:
\begin{lemma} \label{lem:technical_2}
	For all $n\geq 1$ and for any decomposition $j_1+...+j_r=n$ we have
	\begin{align*}
		&(p_{j_1}\otimes ... \otimes p_{j_r})\nu_{n+1} h_{n+1}= (p_{j_1}\otimes ... \otimes p_{j_r})\\
		&\circ ((-1)^{j_2+...+j_{r}}\nu_{j_1+1}h_{j_1+1}\otimes ( ip )^{\otimes j_2+...+j_r}+...+\mathrm{id}^{\otimes j_1+...+j_{r-1}}\otimes \nu_{j_r+1}h_{j_{r}+1} ).
	\end{align*}
\end{lemma}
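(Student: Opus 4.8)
The plan is to prove the identity by induction on $r$, peeling off the last block $j_r$ at each step. Two preliminary ingredients are needed. The first is a \emph{vanishing principle}: any tensor factor that is built from a single copy of the homotopy $h$ is annihilated by the projections $p_{j_m}$. The second is a \emph{splitting identity} which decomposes $\nu_{n+1}h_{n+1}$ along a chosen cut into two ``diagonal'' terms and two ``cross'' terms, the cross terms being precisely the ones the vanishing principle kills.

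For the vanishing principle I would first record, using only the additional conditions $h^2=0$, $h\circ i=0$ and $p\circ h=0$ (the last of which also gives $i\circ p\circ h=0$), that $h_m\circ h_m=0$, and more generally that $h_m\circ(\mathrm{id}^{\otimes a}\otimes h\otimes(ip)^{\otimes b})=0$ whenever $a+1+b=m$: in such a composite the outer $h$ meets either the inner $h$ (yielding $h^2$), or an identity slot whose partner below receives $ip\circ h$ (yielding $iph$), or an $ip$ fed from $h$ (yielding $hi$ after the $p$ cancels), so every summand vanishes. It then follows that $p_m\circ h_m=0$ for all $m\geq1$ (for $m=1$ this is $ph=0$, and for $m\geq2$ one splits off the trailing $h_m$ inside $p_m=\alpha_{m-1}p\,\nu_2h_2\cdots\nu_mh_m$ and uses $h_m^2=0$), and, by distributing a tensor of $p$'s across the summands of $h_k$, that $(p_{j_1}\otimes\cdots\otimes p_{j_s})\circ h_k=0$ for every decomposition $k=j_1+\cdots+j_s$, since the block carrying the lone $h$ contributes $p_{j_m}\circ(\mathrm{id}^{\otimes a}\otimes h\otimes(ip)^{\otimes b})=0$.

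For the splitting identity, fix $k,l\geq1$ with $k+l=n$. Combining the evident decompositions of $\nu_{n+1}$ and $h_{n+1}$ along the cut between positions $k$ and $k+1$ — where $\nu_{n+1}=(-1)^l\nu_{k+1}\otimes\mathrm{id}^{\otimes l}+\mathrm{id}^{\otimes k}\otimes\nu_{l+1}$ by a short inspection of the signs in the definition of $\nu_{n+1}$ — one obtains
\[
\nu_{n+1}h_{n+1}=(-1)^l\,\nu_{k+1}h_{k+1}\otimes(ip)^{\otimes l}+\mathrm{id}^{\otimes k}\otimes\nu_{l+1}h_{l+1}+(-1)^l\,\nu_{k+1}\otimes h_l+h_k\otimes\nu_{l+1}(ip)^{\otimes l+1},
\]
all Koszul signs arising in the composition being trivial because $\nu$ has degree $0$.

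Finally the induction. The base case $r=1$ is a tautology. For $r\geq2$, set $k=j_1+\cdots+j_{r-1}$ and $l=j_r$, apply $(p_{j_1}\otimes\cdots\otimes p_{j_{r-1}})\otimes p_{j_r}$ to the four terms of the splitting identity, and observe: the third term produces $(-1)^l\big((p_{j_1}\otimes\cdots\otimes p_{j_{r-1}})\nu_{k+1}\big)\otimes(p_{j_r}h_l)$, which vanishes since $p_lh_l=0$; the fourth produces a multiple of $\big((p_{j_1}\otimes\cdots\otimes p_{j_{r-1}})h_k\big)\otimes(\cdots)$, which vanishes by the vanishing principle; the second produces $(p_{j_1}\otimes\cdots\otimes p_{j_{r-1}})\otimes(p_{j_r}\nu_{j_r+1}h_{j_r+1})$, which is exactly the $m=r$ summand of the right-hand side; and the first, after commuting $p_{j_r}$ (of degree $j_r-1$) past the degree-$1$ map $\nu_{k+1}h_{k+1}$ and then substituting the inductive formula for $(p_{j_1}\otimes\cdots\otimes p_{j_{r-1}})\nu_{k+1}h_{k+1}$, reproduces the summands $m=1,\dots,r-1$ — here one uses $j_{m+1}+\cdots+j_r=(j_{m+1}+\cdots+j_{r-1})+l$ both to reconcile the powers of $-1$ and to merge $(ip)^{\otimes(j_{m+1}+\cdots+j_{r-1})}\otimes(ip)^{\otimes l}$ into $(ip)^{\otimes(j_{m+1}+\cdots+j_r)}$. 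Summing the four contributions yields $(p_{j_1}\otimes\cdots\otimes p_{j_r})$ applied to the stated sum. The step I expect to be the main obstacle is precisely this last sign accounting: since $h$ has degree $+1$, every distribution of $p_{j_1}\otimes\cdots\otimes p_{j_r}$ across a tensor product introduces a Koszul sign governed by $|p_{j_r}|=j_r-1$, and one must verify these match the exponents $(-1)^{j_{m+1}+\cdots+j_r}$ occurring in the statement — which succeeds only because the same distribution is performed on both sides of the target identity.
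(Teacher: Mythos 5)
Your proof is correct and follows essentially the same route as the paper's: an induction that peels off the last block $j_r$, splits $\nu_{n+1}h_{n+1}$ along that cut into two diagonal and two cross terms, and kills the cross terms via $p_m h_m=0$ and $(p_{j_1}\otimes\cdots\otimes p_{j_s})h_k=0$. The sign bookkeeping you flag at the end (the Koszul sign $(-1)^{j_r-1}$ from moving $p_{j_r}$ past the degree-one map $\nu_{k+1}h_{k+1}$, recombining with the explicit $(-1)^{l}$ to give the exponents $j_{m+1}+\cdots+j_r$) checks out and matches the paper's computation.
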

\begin{proof}
	For $n=1$ the Lemma is trivial. Fix $n\geq 1$ and assume the Lemma holds for all $1\leq m\leq n$. Let $j_1+...+j_{r+1} = n+1$, we compute
	\begin{align*}
		&(p_{j_1}\otimes ... \otimes p_{j_{r+1}})\nu_{n+1} h_{n+2}\\
		&= (p_{j_1}\otimes ... \otimes p_{j_r}\otimes p_{j_{r+1}})((-1)^{j_{r+1}}\nu_{j_1+...+j_r}\otimes \mathrm{id}^{\otimes j_{r+1}}+\mathrm{id}^{\otimes j_1+...+j_r}\otimes \nu_{j_{r+1}})h_{n+2}\\
		&= (-1)^{j_{r+1}}(p_{j_1}\otimes ... \otimes p_{j_r})\nu_{j_1+...+j_r}\otimes p_{j_{r+1}}(h_{j_1+...+j_r+1}\otimes (ip)^{\otimes j_{r+1}} + \mathrm{id}^{\otimes j_1+...+j_r+1}\otimes h_{j_{r+1}}) \\
		&p_{j_1}\otimes ... \otimes p_{j_r}\otimes p_{j_{r+1}}\nu_{j_{r+1}}(h_{j_1+...+j_r} \otimes (ip)^{\otimes j_{r+1}+1} + \mathrm{id}^{\otimes j_1+...+j_r}\otimes h_{j_{r+1}+1})\\
		&= (p_{j_1}\otimes ... \otimes p_{j_r})\nu_{j_1+...+j_r}h_{j_1+...+j_r+1} \otimes p_{j_{r+1}}(ip)^{\otimes j_{r+1}}\\
		&+ (-1)^{j_{r+1}}(p_{j_1}\otimes ... \otimes p_{j_r})\nu_{j_1+...+j_r} \otimes \underbrace{p_{j_{r+1}} h_{j_{r+1}}}_{=0}\\
		&\pm \underbrace{(p_{j_1}\otimes ... \otimes p_{j_r})h_{j_1+...+j_r}}_{=0}\otimes p_{j_{r+1}}\nu_{j_{r+1}}(ip)^{\otimes j_{r+1}+1}\\
		&+ p_{j_1}\otimes ... \otimes p_{j_r}\otimes p_{j_{r+1}}(\mathrm{id}^{\otimes j_1+...j_r} \otimes \nu_{j_{r+1}}) \mathrm{id}^{\otimes j_1+...+j_r}\otimes h_{j_{r+1}+1}\\
		&\overset{\mathrm{i.a.}}{=}\big[(p_{j_1}\otimes ... \otimes p_{j_r})((-1)^{j_2+...+j_r}\nu_{j_1}h_{j_1+1}\otimes ( ip )^{\otimes j_2+...+j_r}+...\\
		&\qquad\qquad\qquad\qquad\qquad...+ \mathrm{id}^{\otimes j_1+...+j_{r-1}}\otimes \nu_{j_r}h_{j_{r}+1} )\big] \otimes p_{j_{r+1}}(ip)^{\otimes j_{r+1}}\\
		&+p_{j_1}\otimes ... \otimes p_{j_{r+1}}( \mathrm{id}^{\otimes j_1+...+j_r} \otimes \nu_{j_{r+1}}h_{j_{r+1}+1})\\
		&= (-1)^{j_{r+1}}p_{j_1}\otimes ... \otimes p_{j_{r+1}} \big((-1)^{j_2+...+j_{r+1}}\nu_{j_1}h_{j_1+1}\otimes ( ip )^{\otimes j_2+...+j_{r}}+...\\
		&\qquad\qquad\qquad\qquad\qquad\qquad...+ \mathrm{id}^{\otimes j_1+...+j_{r-1}}\otimes \nu_{j_r}h_{j_{r}+1}\otimes (ip)^{\otimes j_{r+1}}\big) \\
		&+p_{j_1}\otimes ... \otimes p_{j_{r+1}}( \mathrm{id}^{\otimes j_1+...+j_r} \otimes \nu_{j_{r+1}}h_{j_{r+1}+1}).
	\end{align*}
\end{proof}
\begin{theorem} \label{thm:inverse_quasiiso}
	Let $(A,d,\nu)$ be a dg-algebra. Given a choice of deformation retract of the homology $H_*(A)$
	\[
	\begin{tikzcd}
		\arrow[loop left, "h"] (A,d) \arrow[r, shift left, "p"] & \arrow[l, shift left, "i"](H_*(A),0),
	\end{tikzcd}
	\]
	if $(H_*(A),m)$ is an $A_\infty$-structure, given by $m_1 =0$ and for $n\geq 2$ 
	\[
	m_n := -\alpha_{n} p\nu h_2\nu_3 ... h_{n-1} \nu_n i^{\otimes n}.
	\]
	Then the maps $p_1 = p$ and for $n\geq 2$ 
	\[
	p_n := (-1)^n p_{n-1} \nu_{n} h_n
	\]
	define an $A_\infty$-morphism $p:(A,d)\rightarrow (H_*(A),m)$.
\end{theorem}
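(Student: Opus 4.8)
The strategy is to verify directly that the family $(p_n)_{n\ge 1}$ satisfies the $A_\infty$-morphism equation \eqref{eq:morphism} for $f=p$, from $(A,d,\nu)$ --- viewed as an $A_\infty$-algebra with $m_1=d$, $m_2=\nu$ and $m_s=0$ for $s\ge 3$ --- to $(H_*(A),m)$. First I would rewrite both sides of \eqref{eq:morphism} with the abbreviations $d_n,\nu_n$. On the left only the summands with $s=1$ and $s=2$ survive, and the signs built into the definitions of $d_n$ and $\nu_n$ collapse it to $p_nd_n+p_{n-1}\nu_n$. On the right the $r=1$ summand is $m_1(p_n)=0$, and substituting the identity $m_r=p_{r-1}\nu_r i^{\otimes r}$ (established just above the theorem) turns the rest into $\sum_{r\ge 2}\sum_{i_1+\dots+i_r=n}(-1)^{l(i_1,\dots,i_r)}m_r(p_{i_1}\otimes\dots\otimes p_{i_r})$. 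Thus the theorem is equivalent to the single identity
\[
p_n d_n + p_{n-1}\nu_n \;=\; \sum_{r\ge 2}\;\sum_{i_1+\dots+i_r = n}(-1)^{l(i_1,\dots,i_r)}\,m_r(p_{i_1}\otimes\dots\otimes p_{i_r}),
\]
which I would prove by induction on $n$.

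The cases $n=1$ (which reads $pd=0$, true since $p$ is a chain map into the zero complex) and $n=2$ (where $p_2=p\nu h_2$ and one reduces to $p\nu d_2 h_2=0$, using Lemma \ref{lem:nu_nd_n=d_nnu_n} to rewrite $\nu d_2=-d\nu$ and then $pd=0$, after replacing $\mathrm{id}^{\otimes 2}-(ip)^{\otimes 2}$ by a boundary via equation \eqref{eq:id-ip=d_nh_n}) are short. For the inductive step ($n\ge 3$) I would start from $p_nd_n=(-1)^np_{n-1}\nu_nh_nd_n$, substitute $h_nd_n=(-1)^{n-1}(\mathrm{id}^{\otimes n}-(ip)^{\otimes n})-d_nh_n$ from equation \eqref{eq:id-ip=d_nh_n}, and use $\nu_nd_n=-d_{n-1}\nu_n$ from Lemma \ref{lem:nu_nd_n=d_nnu_n}. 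Recognising $p_{n-1}\nu_n(ip)^{\otimes n}=m_n(p_1\otimes\dots\otimes p_1)$, this gives
\[
p_nd_n+p_{n-1}\nu_n = m_n(p_1\otimes\dots\otimes p_1) + (-1)^n p_{n-1}d_{n-1}\,\nu_nh_n .
\]
Since $m_n(p_1^{\otimes n})$ is exactly the $r=n$ term of the target identity, it remains to match $(-1)^n p_{n-1}d_{n-1}\nu_nh_n$ with the part of the sum over $2\le r\le n-1$.

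To do this I would feed the induction hypothesis into $p_{n-1}d_{n-1}$, which replaces it by $-p_{n-2}\nu_{n-1}$ plus a sum of terms $m_r(p_{i_1}\otimes\dots\otimes p_{i_r})$ over decompositions $i_1+\dots+i_r=n-1$. The term $p_{n-2}\nu_{n-1}\nu_nh_n$ drops out because $\nu_{n-1}\nu_n=0$ (the bar-type operator built from an associative product squares to zero, by a short induction analogous to Lemma \ref{lem:nu_nd_n=d_nnu_n}), and each remaining term, written as $p_{r-1}\nu_r i^{\otimes r}\bigl((p_{i_1}\otimes\dots\otimes p_{i_r})\nu_nh_n\bigr)$, is expanded by Lemma \ref{lem:technical_2}. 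In that expansion the block carrying $\nu_{i_s+1}h_{i_s+1}$ contributes $p_{i_s}\nu_{i_s+1}h_{i_s+1}=(-1)^{i_s+1}p_{i_s+1}$ by the recursion $p_m=(-1)^mp_{m-1}\nu_mh_m$, while each trailing block $p_{i_j}(ip)^{\otimes i_j}$ vanishes unless $i_j=1$ --- since $h_m(ip)^{\otimes m}=0$ follows from $hi=0$ --- in which case it equals $p=p_1$. One then checks that every decomposition $j_1+\dots+j_r=n$ with $2\le r\le n-1$ arises exactly once, from the unique predecessor obtained by subtracting $1$ from its last entry that exceeds $1$, and that the accumulated signs equal $(-1)^{l(j_1,\dots,j_r)}$.

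The main obstacle is exactly this final accounting: keeping track simultaneously of the signs $(-1)^{l(i_1,\dots,i_r)}$, the global $(-1)^n$, the sign $(-1)^{i_{s+1}+\dots+i_r}$ produced by Lemma \ref{lem:technical_2}, the sign $(-1)^{i_s+1}$ from the recursion for $p_n$, and the Koszul signs implicit in the tensor compositions, and then reorganising the triple sum over $(r,(i_1,\dots,i_r),s)$ into the double sum over $(r,(j_1,\dots,j_r))$ with $\sum j_a=n$. I expect the cleanest organisation is to run the induction in the same placeholder style already used for the $\chi_n$-identity and for Lemma \ref{lem:technical_2}, so that the induction hypothesis can be applied blockwise before specialising, and to handle the reindexing by isolating the unique predecessor of each target decomposition as above.
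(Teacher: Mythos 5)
Your proposal follows essentially the same route as the paper's proof: the same reduction of \eqref{eq:morphism} to the identity $p_nd_n+p_{n-1}\nu_n=\sum(-1)^l m_r(p_{j_1}\otimes\dots\otimes p_{j_r})$, the same induction using \eqref{eq:id-ip=d_nh_n}, Lemma \ref{lem:nu_nd_n=d_nnu_n}, the vanishing $\nu_{n-1}\nu_n=0$, and Lemma \ref{lem:technical_2}, and the same final reindexing over decompositions with trailing $1$'s, with the signs reconciled via $l(j_1,\dots,j_i,1,\dots,1)+j_1+\dots+j_{i-1}=l(j_1,\dots,j_i+1,1,\dots,1)$. The sign accounting you flag as the main obstacle is carried out in the paper exactly along the lines you sketch, so there is no gap.
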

\begin{proof}
	We must prove equation \eqref{eq:morphism} for all $n\geq 1$. For $n=1$ this is clear. Let $n\geq 2$, we must prove the following equation
	\begin{equation}\label{eq:p_is_a_morphism}
		p_nd_n + p_{n-1}\nu_n = \sum_{\substack{j_1+...+j_r=n\\r>1}}(-1)^lm_r(p_{j_1}\otimes ... \otimes p_{j_r}).
	\end{equation}
	The proof is by induction. For $n=2$, the left-hand side of equation \eqref{eq:p_is_a_morphism} is
	\begin{align*}
		p_2d_2 +p\nu &= p\nu h_2d_2 + p\nu \\
		&\overset{\eqref{eq:id-ip=d_nh_n}}{=} p\nu (-\mathrm{id}\otimes \mathrm{id}+ ip\otimes ip -d_2h_2) + p\nu\\
		&= p\nu(i\otimes i)(p\otimes p) -p\nu d_2 h_2\\
		&\overset{\eqref{eq:nu-d=d-nu}}{=} p\nu(i\otimes i)(p\otimes p)+ \underbrace{pd}_{=0} \nu h_2\\
		&= m_2(p\otimes p).
	\end{align*}
	Which is precisely the right-hand side of equation \eqref{eq:p_is_a_morphism}. Now assume equation \eqref{eq:p_is_a_morphism} holds for a fixed $n\geq 2$. Compute the left-hand side of equation \eqref{eq:p_is_a_morphism} for $n+1$:
	\begin{align*}
		p_{n+1}d_{n+1} + p_n\nu_{n+1} &= (-1)^{n+1} p_{n} \nu_{n+1} h_{n+1} d_{n+1} + p_n\nu_{n+1}\\
		&\overset{\eqref{eq:id-ip=d_nh_n}}{=} - p_{n} \nu_{n+1} (\mathrm{id}^{\otimes n+1} - (ip)^{\otimes n+1}- (-1)^{n}d_{n+1} h_{n+1}) + p_n\nu_{n+1} \\
		&\overset{\eqref{eq:nu-d=d-nu}}{=} p_{n} \nu_{n+1} (ip)^{\otimes n+1}- (-1)^{n}p_n d_n \nu_{n+1} h_{n+1}) \\
		&= m_{n+1}p^{\otimes n+1}-(-1)^n p_nd_n\nu_{n+1} h_{n+1} \\
		&\overset{\mathrm{i.a.}}{=} m_{n+1}p^{\otimes n+1} +(-1)^{n}p_{n-1}\nu_n \nu_{n+1} h_{n+1} \\
		&\ -(-1)^n\sum_{\substack{j_1+...+j_r=n\\r>1}}(-1)^l m_r (p_{j_1}\otimes ... \otimes p_{j_r})\nu_{n+1} h_{n+1}:= \clubsuit.
	\end{align*}
	We have $\nu_{n}\nu_{n+1} = 0$ due to associativity of the product -- this is easily proved by induction. By Lemma \ref{lem:technical_2} we have
	\begin{align*}
		&\ (p_{j_1}\otimes ... \otimes p_{j_r})\nu_{n+1} h_{n+1}\\
		&= \sum_{i=1}^r (-1)^{j_{i+1}+...+j_r} (p_{j_1}\otimes ... \otimes p_{j_r})( \mathrm{id}^{\otimes j_1+...+j_{i-1}}\otimes \nu_{j_i+1}h_{j_i+1}\otimes (ip)^{j_{i+1}+...+j_r})\\
		&= \sum_{i=1}^r(-1)^{r-i} (p_{j_1}\otimes ... \otimes p_{j_{i-1}} \otimes p_{j_i} \nu_{j_i+1}h_{j_i+1} \otimes p_{j_{i+1}}(ip)^{\otimes j_{i+1}}\otimes ... \otimes p_{j_r}(ip)^{\otimes j_{r}} \\
		&= \sum_{i=1}^r \delta_{j_{i+1},1}...\delta_{j_r,1}(-1)^{j_i+1+r-i} p_{j_1} \otimes ... \otimes p_{j_{i-1}} \otimes p_{j_i+1} \otimes p^{\otimes r-i}.
	\end{align*}
	Inserting this back into the previous equation we obtain
	\begin{align*}
		\clubsuit & = m_{n+1}p^{\otimes n+1} +  \sum_{r=2}^{n} \sum_{i=1}^r \sum_{\substack{j_1+...+j_i +(r-i) =n\\r>1}}\underbrace{(-1)^{l(j_1,...,j_i,1,...,1)} (-1)^{j_1+...+j_{i-1}}}_{=(-1)^{l(j_1,...,j_{i-1},j_i+1,1,...,1)}} \\
        & \qquad\qquad\qquad\qquad\qquad m_r ( p_{j_1} \otimes ... \otimes p_{j_{i-1}} \otimes p_{j_i+1} \otimes p^{\otimes r-i})\\
		&= m_{n+1}p^{\otimes n+1} +  \sum_{r=2}^{n} \sum_{i=1}^r \sum_{\substack{j_1+...+j_r=n+1\\j_i\neq 1, j_{i+1}=...=j_r=1\\r>1}} (-1)^l m_r(p_{j_1}\otimes ... \otimes p_{j_r})\\
		&= m_{n+1}p^{\otimes n+1} + \sum_{\substack{j_1+...+j_r=n+1\\1<r<n+1}}(-1)^l m_r(p_{j_1}\otimes ... \otimes p_{j_r}).
	\end{align*}
	Indeed, by equation \eqref{eq:A_infty_signs} we have
	\begin{align*}
		l(j_1,...,j_i,1,...,1) + j_1+...+j_{i-1} &= \sum_{1\leq a<b\leq i} (j_b-1)j_a + j_1+...+j_{i-1}\\
		&= \sum_{1\leq a<b< i} (j_b-1)j_a +\sum_{1\leq a< i} (j_i+1-1)j_a\\
		&= l(j_1,...,j_i+1,1,...,1).
	\end{align*}
\end{proof}

Theorem \ref{thm:inverse_quasiiso} allows for an explicit proof of the uniqueness part of Kadeishvili's Theorem.
\begin{theorem}
	Let $(A,d)$ be a dg-algebra and
\[
\begin{tikzcd}
		\arrow[loop left, "h"] (A,d) \arrow[r, shift left, "p"] & \arrow[l, shift left, "i"](H_*(A),0)
\end{tikzcd}
\]
a choice of deformation retract. Then $m_1 := 0$ and $m_n:= p\circ \lambda_n \circ i^{\otimes n}$ for $n\geq 2$ defines an $A_\infty$-structure on $H_*(A)$. The maps $i_1:=i$ and $i_n:= \lambda_n \circ i^{\otimes n}$ for $n\geq 2$ define a minimal model
\[
i:(H_*(A),m)\rightarrow (A,d).
\]
Moreover the minimal model is unique up to isomorphism.
\end{theorem}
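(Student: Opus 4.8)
The plan is to treat existence and uniqueness separately. For existence there is nothing new to prove: $m_n = p\lambda_n i^{\otimes n}$ satisfies the $A_\infty$-relations by the computation $\Phi_n = 0$ of Merkulov's lemma, one has $m_1 = 0$ by definition, $m_2 = p\nu i^{\otimes 2}$ is the induced product (checked exactly as in the proof of Theorem~\ref{thm:existance_minimal_model}), and $i_1 = i$ induces the identity on homology because $i$ is a cycle-choosing map, so $H_*(i) = \mathrm{id}$; all of this is Theorem~\ref{thm:merkulov}. Hence the only remaining point is that the $A_\infty$-structure on $H_*(A)$ is unique up to isomorphism, and the idea is to establish this directly, feeding in the projection morphism of Theorem~\ref{thm:inverse_quasiiso} in place of the perturbation-lemma map $P_\infty$ used in Theorem~\ref{thm:existance_minimal_model}.

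First I would note that the $A_\infty$-structure $m$ produced by Theorem~\ref{thm:merkulov} is exactly the one for which Theorem~\ref{thm:inverse_quasiiso} builds a projection: unwinding the recursion for $\chi_n$ gives $\chi_n = -\alpha_n\,\nu h_2\nu_3 h_3\cdots h_{n-1}\nu_n$, so that $m_n = p\chi_n i^{\otimes n} = -\alpha_n\, p\,\nu h_2\nu_3\cdots h_{n-1}\nu_n\, i^{\otimes n}$ (and $\chi_n i^{\otimes n} = \lambda_n i^{\otimes n}$ by equation~\eqref{eq:merkulov_minimal}), which is precisely the structure in the hypothesis of Theorem~\ref{thm:inverse_quasiiso}. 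That theorem therefore furnishes an $A_\infty$-morphism $p : (A,d) \to (H_*(A),m)$ whose first component $p_1$ is the projection of the deformation retract.

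Next, given any other minimal model $i' : (H_*(A),m') \to (A,d)$, I would form the composite $A_\infty$-morphism $p \circ i' : (H_*(A),m') \to (H_*(A),m)$ and compute its first component. By the composition formula~\eqref{eq:a_infty_composition} it equals $p_1 \circ i'_1 = p \circ i'_1$, a chain map between two complexes carrying the zero differential, hence it agrees with the linear map it induces on homology, namely $H_*(p) \circ H_*(i'_1) = \mathrm{id} \circ \mathrm{id} = \mathrm{id}_{H_*(A)}$; here $H_*(i'_1) = \mathrm{id}$ by the definition of a minimal model and $H_*(p) = \mathrm{id}$ because $p$ and the cycle-choosing inclusion $i$ satisfy $H_*(p)H_*(i) = \mathrm{id}$ with $H_*(i) = \mathrm{id}$. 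Thus $(p \circ i')_1 = \mathrm{id}$ is an isomorphism, and Proposition~\ref{prop:inverse} then gives that $p \circ i'$ is an isomorphism of $A_\infty$-algebras, so $(H_*(A),m) \cong (H_*(A),m')$, which is the asserted uniqueness.

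The argument is short once Theorem~\ref{thm:inverse_quasiiso} is available, and I do not expect a genuine obstacle; the only mildly delicate point is the sign and indexing bookkeeping in the first step that matches the structure $m$ of Theorem~\ref{thm:merkulov} with the one appearing in the statement of Theorem~\ref{thm:inverse_quasiiso}. If one preferred to bypass even that, one could simply cite the uniqueness already established in Theorem~\ref{thm:existance_minimal_model} (the structure there being the same $m$), but the purpose of this theorem is precisely to reach uniqueness through the explicit, perturbation-free route.
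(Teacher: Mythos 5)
Your proposal is correct and follows essentially the same route as the paper: existence is delegated to Theorem~\ref{thm:merkulov}, and uniqueness is obtained by applying Theorem~\ref{thm:inverse_quasiiso} to produce the projection morphism $p$, composing with the second minimal model $i'$, and invoking Proposition~\ref{prop:inverse} once the first component is seen to be the identity. Your verification that $(p\circ i')_1=\mathrm{id}$ via $H_*(p)\circ H_*(i'_1)$ is in fact slightly more careful than the paper's one-line computation, since $i'_1$ need not literally equal $i$.
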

\begin{proof}
	The first part is proved in Theorem \ref{thm:merkulov}. It remains to prove uniqueness: Let $i': (H_*(A),m')\rightarrow (A,d)$ be another minimal model. By Theorem \ref{thm:inverse_quasiiso} there exists a quasi-isomorphism $P: (A,d)\rightarrow (H_*(A),m)$ that lifts the projection map $p$. Then $P\circ i' :(H_*(A),m')\rightarrow (H_*(A),m)$ is an isomorphism as $(P\circ i')_1 = p\circ i= \mathrm{id}$.
\end{proof}

Another application of Theorem \ref{thm:inverse_quasiiso} an explicit proof of Lemma \ref{lem:quasi-isomorphic-inverse}.

\subsection{The Projection Map II} \label{extention_A_infty}
This subsection generalises Theorem \ref{thm:inverse_quasiiso} to the case where $A$ is allowed to be an $A_\infty$-algebra. More precisely, given an $A_\infty$-algebra $(A,m)$, let $H_*(A)$ denote the homology of the chain complex $(A,m_1)$, and let 
\[
\begin{tikzcd}
		\arrow[loop left, "h"] (A,m_1) \arrow[r, shift left, "p"] & \arrow[l, shift left, "i"](H_*(A),0)
\end{tikzcd}
\]
be a choice of deformation retract. We repeat the coalgebra construction from Section \ref{sec:pertub_lemma}, only this time the perturbation $t$ to the differential $\tilde d$ contains all the higher $m_n$ maps. That is, consider a map
\[
T^c(sA)\rightarrow sA
\]
which on $(sA)^{\otimes n}$ for $n\geq 2$ is given by 
\[
t_n := sm_n(s^{-1})^{\otimes n}:(sA)^{\otimes n}\rightarrow sA.
\]
This map extends to a unique coderivation 
\[
t: T^c(sA)\rightarrow T^c(sA).
\]
With minor changes the coalgebra argument from Section \ref{sec:pertub_lemma} goes through as before and gives us a square zero differential on $T^c(sH_*(A))$ given by
\[
d_\infty = P\sum_{n\geq 0} (-tH)^n tI.
\]
Set for $n\geq 1 $ and $ 1\leq j\leq n$
\[
m_j^n := \sum_{r+j+t=n} (-1)^{rj+t} \mathrm{id}^{\otimes r}\otimes m_j\otimes \mathrm{id}^{\otimes t}.
\]
Set $\chi_2 := m_2$, and for $n\geq 3$ set
\begin{equation} \label{eq:chi_2}
\chi_n := m_n + \sum_{j=1}^{n-2}(-1)^{n-j}\chi_{n-j}h_{n-j}m_{j+1}^n.
\end{equation}
We get, similar as in the previous subsection, an explicit formula for the $A_\infty$-structure on $H_*(A):$
\[
m'_n = p\chi_n i^{\otimes n}.
\]
Of course, the difference is that the $\chi_n$ definition includes all the higher $m_n$ maps. The coalgebra morphism coming from Section \ref{sec:pertub_lemma} is given by
\[
P_t = P\sum_{n\geq 0} (-tH)^n : T^c(sA)\rightarrow T^c(sH_*(A)).
\]
This gives an $A_\infty$-morphism $p:(A,m)\rightarrow (H_*(A),m')$ given by $p_1 =p$ and for $n\geq 2$:
\begin{equation}\label{eq:p_n_via_chi}
	p_n = (-1)^{n}p \chi_n h_n
\end{equation}
or more directly
\[
p_n = (-1)^{n} \sum_{j=1}^{n-1}p_{j}m_{n-j+1}^n h_n.
\]
This sets us up to prove that $p$ is an $A_\infty$-morphism explicitly as we are in a similar setting as in the previous subsection. Before proceeding we need to prove the following two technical lemmas. The first of which is concerning the $m_j^n$ maps\footnote{Equation \eqref{eq:technical_lemma} is equivalent to the higher associativity conditions, the proof is not really of interest, so feel free to skip it.}:
\begin{lemma} \label{lem:technical_lemma}
	For all $n\geq 2$ and $2\leq j\leq n$
	\begin{equation} \label{eq:technical_lemma}
		m_j^n m_1^n +\sum_{i=1}^{j-1} m_i^{n-j+i}m_{j-i+1}^n=0.
	\end{equation}
\end{lemma}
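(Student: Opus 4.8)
The plan is to read Equation \eqref{eq:technical_lemma} off from the relation $d^{2}=0$, componentwise, where $d$ is the square-zero coderivation on $T^{c}(sA)$ attached to the $A_\infty$-algebra $(A,m)$ by Proposition \ref{prop:(A,m)==d^2}.

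First I would rewrite the left-hand side. Substituting $a:=j-i+1$ in the sum over $i$ turns Equation \eqref{eq:technical_lemma} into the equivalent identity
\[
\sum_{a=1}^{j} m_{j+1-a}^{n+1-a}\circ m_{a}^{n}=0 ,
\]
the term $a=1$ being $m_{j}^{n}m_{1}^{n}$ and the terms $a=2,\dots ,j$ reproducing $\sum_{i=1}^{j-1}m_{i}^{n-j+i}m_{j-i+1}^{n}$. (For $j=n$ this is precisely the $n$-th higher associativity condition, so the content of the lemma is the case $j<n$.)

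Next I would pass to the bar side. Put $b_{s}:=s\circ m_{s}\circ (s^{-1})^{\otimes s}$ and, for $1\le s\le n$,
\[
B_{s}^{(n)}:=\sum_{r+s+t=n}\mathrm{id}^{\otimes r}\otimes b_{s}\otimes\mathrm{id}^{\otimes t}\colon (sA)^{\otimes n}\longrightarrow (sA)^{\otimes (n-s+1)} .
\]
The coderivation $d$ restricts on $(sA)^{\otimes n}$ to $\sum_{s=1}^{n}B_{s}^{(n)}$, so the component of $d\circ d$ that starts on $(sA)^{\otimes n}$ and lands in $(sA)^{\otimes (n-j+1)}$ is exactly $\sum_{a=1}^{j}B_{j+1-a}^{(n-a+1)}\circ B_{a}^{(n)}$ (the two arities must sum to $j+1$), and it vanishes because $d^{2}=0$. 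It then remains to translate back into the $m$-notation. The sign manipulations already carried out in the proofs of Proposition \ref{prop:(A,m)==d^2} and Corollary \ref{cor:A_infty_morphisms=F} yield
\[
B_{s}^{(n)}=\alpha_{n-s+1}\; s^{\otimes (n-s+1)}\circ m_{s}^{n}\circ (s^{-1})^{\otimes n},
\]
and both composites $s^{\otimes q}\circ (s^{-1})^{\otimes q}$ and $(s^{-1})^{\otimes q}\circ s^{\otimes q}$ equal $\alpha_{q}\,\mathrm{id}$. Substituting, the intermediate $(s^{-1})^{\otimes (n-a+1)}\circ s^{\otimes (n-a+1)}=\alpha_{n-a+1}\,\mathrm{id}$ contributes a factor $\alpha_{n-a+1}$ which, together with the $\alpha_{n-a+1}$ already inside $B_{a}^{(n)}$, squares to $1$; hence each summand equals $\alpha_{n-j+1}\, s^{\otimes (n-j+1)}\circ\bigl(m_{j+1-a}^{n+1-a}\circ m_{a}^{n}\bigr)\circ (s^{-1})^{\otimes n}$, and summing over $a$,
\[
\alpha_{n-j+1}\; s^{\otimes (n-j+1)}\circ\Bigl(\sum_{a=1}^{j}m_{j+1-a}^{n+1-a}\circ m_{a}^{n}\Bigr)\circ (s^{-1})^{\otimes n}=0 .
\]
Since $\alpha_{n-j+1}\neq 0$ and $s^{\otimes\bullet}$, $(s^{-1})^{\otimes\bullet}$ are isomorphisms, the bracketed sum is $0$, which is the rewritten form of Equation \eqref{eq:technical_lemma}.

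The only genuine work is the suspension-sign bookkeeping — verifying the displayed formula for $B_{s}^{(n)}$ and the cancellation $\alpha_{q}^{2}=1$ — and every sign identity needed for it is already in the text. A self-contained alternative, working directly with the maps $m_{s}^{n}$, is a direct induction on $n$ (with $j$ a free parameter) in the spirit of Lemma \ref{lem:nu_nd_n=d_nnu_n} and Equation \eqref{eq:id-ip=d_nh_n}, repeatedly splitting off the outermost tensor factor; but that amounts to reproving $d^{2}=0$ by hand and is heavier on signs, so I would prefer the coalgebra argument.
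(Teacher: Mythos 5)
Your proof is correct, but it is a genuinely different argument from the one in the paper. You read Equation \eqref{eq:technical_lemma} off as the weight-$(n-j+1)$ component of $d^2=0$ on $T^c(sA)$: Proposition \ref{prop:(A,m)==d^2} verifies only $\pi\circ d^2=0$ and then upgrades to $d^2=0$ via the coderivation property, and your observation is that the lemma is precisely this upgrade rewritten in the $m$-notation (making precise the paper's own footnote that the identity ``is equivalent to the higher associativity conditions''). The paper instead stays entirely in the $m$-calculus: it first extracts the extreme case $j=n$ directly from the higher associativity conditions, and then runs an induction on $n$, splitting $m_j^{n+1}$ and $m_1^{n+1}$ into the terms supported on the first $n$ factors plus the single term touching the last factor, applying the induction hypothesis, and reassembling — a longer, sign-heavy computation, but self-contained within the ``explicit proofs'' programme of that section. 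The one step you leave implicit is the formula $B_s^{(n)}=\alpha_{n-s+1}\,s^{\otimes(n-s+1)}\circ m_s^n\circ(s^{-1})^{\otimes n}$; it is not literally stated in the proofs you cite, but it does hold: comparing the Koszul signs termwise reduces it to the identity $\alpha_{r+t+1}=(-1)^{rt+r+t}\alpha_r\alpha_t$, which follows from $\alpha_q=(-1)^{q(q-1)/2}$. With that checked, the cancellation $\alpha_{n-a+1}^2=1$ and the invertibility of the suspensions finish the argument as you describe, and there is no circularity since Proposition \ref{prop:(A,m)==d^2} does not depend on this lemma.
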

\begin{proof}
	Firstly, we show the case $n\geq 2$ and $j = n$ separately: the $A_\infty$-relations tell us that 
	\[
	0= \sum_{r+i+t=n} (-1)^{ri+t} m_{r+t+1}(\mathrm{id}^{\otimes r}\otimes m_i\otimes\mathrm{id}^{\otimes t}) = \sum_{i=1}^n m_{n-i+1} m_i^n,
	\]
	and thus
	\[
	m_n m_1^n = - \sum_{i=2}^n m_{n-i+1} m_i^n = - \sum_{i=1}^{n-1} m_{i} m_{n-i+1}^n.
	\]
	Secondly, we prove the rest by induction. The induction start $n=2$, $j=2$ is already proven. Assume the lemma holds for a fixed $n\geq 2$ and all $2\leq j \leq n$. Let $2\leq j \leq n$ and compute the first term of equation \eqref{eq:technical_lemma}
	\begin{align*}
		&m^{n+1}_j m_1^{n+1} \\
		&= (-m_j^n\otimes \mathrm{id}+ (-1)^{j(n-j+1)} \mathrm{id}^{\otimes n-j+1}\otimes m_j)m_1^{n+1}\\
		&= -m_j^n\otimes \mathrm{id}(-m_1^n \otimes\mathrm{id} + (-1)^n \mathrm{id}^{\otimes n} \otimes m_1)+(-1)^{j(n-j+1)} \mathrm{id}^{\otimes n-j+1}\\
		&\qquad\otimes m_j((-1)^j m_1^{n-j+1} \otimes \mathrm{id}^{\otimes j} + (-1)^{n-j+1} \mathrm{id}^{\otimes n-j+1} \otimes m_1^j)\\
		&= m_j^n m_1^n \otimes \mathrm{id} -(-1)^n m_j^n \otimes m_1 + (-1)^{j(n-j+1)} m_1^{n-j+1} \otimes m_j \\
		&\qquad+ (-1)^{(n-j+1)(j+1)} \mathrm{id}^{\otimes n-j+1} \otimes m_j m_1^j\\
		&\overset{\mathrm{i.a.}}{=} -\sum_{i=1}^{j-1} m_i^{n-j+i}m_{j-i+1}^n \otimes \mathrm{id} -(-1)^n m_j^n \otimes m_1 + (-1)^{j(n-j+1)} m_1^{n-j+1} \otimes m_j \\
		&\qquad - (-1)^{(n-j+1)(j+1)}\sum_{i=1}^{j-1} \mathrm{id}^{\otimes n-j+1} \otimes m_i m_{j-i+1}^j.
	\end{align*}
	Now compute the sum term of equation \eqref{eq:technical_lemma}
	\begin{align*}	
		& -\sum_{i=1}^{j-1} m_i^{n+1-j+i}m_{j-i+1}^{n+1}\\
		&= -\sum_{i=1}^{j-1} (-m_i^{n-j+i} \otimes \mathrm{id} + (-1)^{(n-j+1)i} \mathrm{id}^{\otimes n-j+1}\otimes m_i) m_{j-i+1}^{n+1}\\
		&= \sum_{i=1}^{j-1} m_i^{n-j+i} \otimes \mathrm{id}\big( -m_{j-i+1}^n \otimes \mathrm{id} +(-1)^{(n-j+i)(j-i+1)} \mathrm{id}^{\otimes n-j+i} \otimes m_{j-i+1}\big)\\
		&\qquad- (-1)^{(n-j+1)i} \mathrm{id}^{n-j+1}\otimes m_i\big((-1)^{i} m_{j-i+1}^{n-i+1} \otimes \mathrm{id}^{\otimes i}\\
		&\qquad\qquad\qquad\qquad\qquad\qquad+ (-1)^{(n-j+1)(j-i+1)} \mathrm{id}^{\otimes n-j+1} \otimes m^j_{j-i+1}\big)\\
		&= \sum_{i=1}^{j-1}- m_i^{n-j+i} m_{j-i+1}^n \otimes \mathrm{id} + (-1)^{(n-j+i)(j-i+1)} m_i^{n-j+i} \otimes m_{j-i+1}\\
		&\qquad - (-1)^{(n-i+1)i} m_{j-i+1}^{n-i+1}\otimes m_i- (-1)^{(n-j+1)(j+1)} \mathrm{id}^{\otimes n-j+1} \otimes m_i m^j_{j-i+1}.
	\end{align*}
	Rewrite the summands in the middle of the last line, setting $i' :=j-i+1$
	\begin{align*}
		&\quad \sum_{i=1}^{j-1} (-1)^{(n-j+i)(j-i+1)} m_i^{n-j+i} \otimes m_{j-i+1} - \sum_{i=1}^{j-1} (-1)^{(n-i+1)i} m_{j-i+1}^{n-i+1}\otimes m_i\\
		&= \sum_{i'=2}^{j} (-1)^{(n-i'+1)i'} m_{j-i'+1}^{n-i'+1} \otimes m_{i'} - \sum_{i=1}^{j-1} (-1)^{(n-i+1)i} m_{j-i+1}^{n-i+1}\otimes m_i\\
		&= (-1)^{(n-j+1)j} m_1^{n-j+1} \otimes m_j - (-1)^n m_j^n \otimes m_1.
	\end{align*}
\end{proof}
The second technical lemma is proved similar to Lemma \ref{lem:technical_2}.
\begin{lemma}\label{lem:technical_2_infty} For all $n\geq 1$, all $1\leq j\leq n$, all $1\leq r\leq j$, and all decompositions $j_1+...+j_r = j$ we have
\begin{align*}
    &(p_{j_1}\otimes ... \otimes p_{j_r})m_{n-j+1}^n h_n = 
    (p_{j_1}\otimes ... \otimes p_{j_r})\sum_{k=1}^r (-1)^{(j_1+...+j_{k-1})(n-j+1)+j_{k+1}+...+j_r}\\
    &\qquad\qquad\qquad\qquad\qquad\qquad \mathrm{id}^{\otimes j_1+...+j_{k-1}}\otimes m_{n-j+1}^{n-j+j_k}h_{n-j+j_k}\otimes (ip)^{j_{k+1}+... j_{j_r}}.
\end{align*}
\end{lemma}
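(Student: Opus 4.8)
The plan is to follow the proof of Lemma~\ref{lem:technical_2} essentially verbatim, replacing the iterated product $\nu_{n+1}$ there by the iterated $A_\infty$-operation $m^n_{n-j+1}$ here. I would induct on $r$: deleting the last block sends $(n,j,r)$ to $(n-j_r,\,j-j_r,\,r-1)$ while leaving $n-j+1$ unchanged, so the inductive hypothesis for $r-1$ blocks applies to the pair $(n-j_r,\,j-j_r)$ with decomposition $j_1,\dots,j_{r-1}$; the base case $r=1$ is the tautology $p_j m^n_{n-j+1}h_n=p_j m^n_{n-j+1}h_n$ (here $n-j+j_1=n$). The two structural ingredients are: first, a splitting of $m^n_{n-j+1}$ at the last block boundary, obtained by sorting the unique placement of $m_{n-j+1}$ according to whether its single output slot lands among the first $r-1$ blocks or in the last block, and reorganizing the defining sum while tracking the signs $(-1)^{\rho(n-j+1)+\tau}$ exactly as for $d_n$ in \eqref{eq:d_n_split}:
\[
m^n_{n-j+1}\;=\;(-1)^{j_r}\bigl(m^{\,n-j_r}_{n-j+1}\otimes\mathrm{id}^{\otimes j_r}\bigr)\;+\;(-1)^{(j_1+\dots+j_{r-1})(n-j+1)}\bigl(\mathrm{id}^{\otimes j-j_r}\otimes m^{\,n-j+j_r}_{n-j+1}\bigr);
\]
second, the elementary identity $h_n=h_a\otimes(ip)^{\otimes b}+\mathrm{id}^{\otimes a}\otimes h_b$ for $a+b=n$, applied with $a=n-j_r$ to the first summand above and with $a=j-j_r$ to the second.

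Substituting both and expanding produces four cross terms, which I would simplify using the ``collapse'' identities coming from the deformation-retract conditions $h\circ h=0$, $p\circ h=0$, $h\circ i=0$, exactly as in Lemma~\ref{lem:technical_2}: these give $h_m\circ h_m=0$, hence $p_m h_m=0$ for all $m$, and more generally $(p_{j_1}\otimes\dots\otimes p_{j_s})h_{j_1+\dots+j_s}=0$ (the lone $h$ always sits in a block that contributes zero). The cross term with $m_{n-j+1}$ in the first $r-1$ blocks and $h$ in the last block has last tensor factor $p_{j_r}h_{j_r}=0$; the one with $m_{n-j+1}$ in the last block and $h$ in the first $r-1$ blocks has first tensor factor $(p_{j_1}\otimes\dots\otimes p_{j_{r-1}})h_{j-j_r}=0$; both vanish. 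After factoring $p_{j_1}\otimes\dots\otimes p_{j_r}$ through the tensor products, the two surviving terms are
\[
(-1)^{j_r}\bigl[(p_{j_1}\otimes\dots\otimes p_{j_{r-1}})(m^{\,n-j_r}_{n-j+1}h_{n-j_r})\bigr]\otimes\bigl[p_{j_r}(ip)^{\otimes j_r}\bigr]
\quad\text{and}\quad
(-1)^{(j_1+\dots+j_{r-1})(n-j+1)}\bigl(p_{j_1}\otimes\dots\otimes p_{j_{r-1}}\bigr)\otimes\bigl[p_{j_r}(m^{\,n-j+j_r}_{n-j+1}h_{n-j+j_r})\bigr].
\]
The second is precisely the $k=r$ term of the right-hand side; to the first I apply the inductive hypothesis to $(p_{j_1}\otimes\dots\otimes p_{j_{r-1}})(m^{\,n-j_r}_{n-j+1}h_{n-j_r})$, and since $(n-j_r)-(j-j_r)+1=n-j+1$ and $(n-j_r)-(j-j_r)+j_k=n-j+j_k$, this reproduces exactly the terms $k=1,\dots,r-1$.

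Finally the sign match. Factoring $p_{j_1}\otimes\dots\otimes p_{j_r}$ through the tensor product in the first surviving term introduces a Koszul sign $(-1)^{(j_r-1)(n-j)}$, since $|p_{j_r}|=j_r-1$ and $|m^{\bullet}_{n-j+1}h_{\bullet}|=(n-j-1)+1=n-j$; this is $+1$ on every non-vanishing term, because $p_{j_r}(ip)^{\otimes j_r}=0$ unless $j_r=1$. With $j_r=1$, the factor $(-1)^{j_r}$ together with the exponent $j_{k+1}+\dots+j_{r-1}$ supplied by the inductive hypothesis reassembles to $(-1)^{j_{k+1}+\dots+j_r}$, matching the claimed exponent $(j_1+\dots+j_{k-1})(n-j+1)+j_{k+1}+\dots+j_r$; the $k=r$ term carries no extra sign since it is factored through an identity block. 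The main obstacle, as throughout this subsection, is precisely this sign bookkeeping: keeping the defining signs of $m^n_{n-j+1}$ coherent with the Koszul signs from tensoring the nonzero-degree maps $m_{n-j+1}$ and $h$ through the two splittings, and checking that every apparent mismatch lands on a term forced to vanish by the constraint $p_{j_r}(ip)^{\otimes j_r}=0$.
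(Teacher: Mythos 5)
Your proof is correct and is exactly the argument the paper intends: it explicitly declines to prove this lemma, saying only that it ``is proved similar to Lemma \ref{lem:technical_2}'', and your induction on the number of blocks --- splitting $m^n_{n-j+1}$ and $h_n$ at the last block boundary, killing the two cross terms via $p_{j_r}h_{j_r}=0$ and $(p_{j_1}\otimes\dots\otimes p_{j_{r-1}})h_{j-j_r}=0$, and feeding the surviving left factor back into the inductive hypothesis --- is precisely that proof, with the signs (including the Koszul factor $(-1)^{(j_r-1)(n-j)}$ and the reassembly of $(-1)^{j_r}$ into the exponent $j_{k+1}+\dots+j_r$) checking out.
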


\begin{theorem}
	Let $(A,m)$ be an $A_\infty$-algebra. Denote by $H_*(A)$ the homology of the chain complex $(A,m_1)$. Given a choice of deformation retract of the homology
	\[
	\begin{tikzcd}
		\arrow[loop left, "h"] (A,m_1) \arrow[r, shift left, "p"] & \arrow[l, shift left, "i"](H_*(A),0),
	\end{tikzcd}
	\]
	let $\chi_n$ be defined as in equation \eqref{eq:chi_2}. Assume that $m'_n := p\chi_n i^{\otimes n}$ defines an $A_\infty$-structure on $H_*(A)$. Then $p_1 := p$ and for $n\geq 2$ 
	\[
	p_n := (-1)^{n} \sum_{j=1}^{n-1}p_{j}m_{n-j+1}^n h_n
	\]
	defines an $A_\infty$-algebras morphism $p:(A,m)\rightarrow (H_*(A),m')$.
\end{theorem}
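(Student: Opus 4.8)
The plan is to verify the morphism identity \eqref{eq:morphism} for $p$ directly, by induction on $n$, following the same strategy as the proof of Theorem \ref{thm:inverse_quasiiso} but now carrying along the \emph{full} perturbation (all higher $m_s$, not just $m_2$). Writing the left-hand side of \eqref{eq:morphism} in the $m_j^n$-notation it becomes $\sum_{s=1}^n p_{n-s+1} m_s^n$, and since $m'_1=0$ its right-hand side is $R_n:=\sum_{\substack{j_1+\dots+j_r=n,\, r>1}}(-1)^{l(j_1,\dots,j_r)}m'_r(p_{j_1}\otimes\dots\otimes p_{j_r})$. A preliminary identity, established by a short induction from \eqref{eq:chi_2} and \eqref{eq:p_n_via_chi} together with $m_n^n=m_n$, is $p\chi_n=\sum_{s=2}^n p_{n-s+1}m_s^n$; in particular $m'_n=p\chi_n i^{\otimes n}$ and $p_n=(-1)^n p\chi_n h_n$. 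The base case $n=1$ reads $p\,m_1=0$, which holds because $p$ vanishes on $m_1$-boundaries; the case $n=2$ follows from the analogue of \eqref{eq:id-ip=d_nh_n} (valid with $d,d_n$ replaced by $m_1,m_1^n$, since the retract is of $(A,m_1)$), the second higher associativity relation $m_1m_2=-m_2m_1^2$, and $p\,m_1=0$, after identifying $m'_2=p\,m_2(i\otimes i)$.

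For the inductive step, assume \eqref{eq:morphism} holds at every level $\le n$ and write $\Lambda$ for the left-hand side at level $n+1$, so $\Lambda=p_{n+1}m_1^{n+1}+p\chi_{n+1}$. Substituting $p_{n+1}=(-1)^{n+1}p\chi_{n+1}h_{n+1}$ and using the analogue of \eqref{eq:id-ip=d_nh_n} to rewrite $h_{n+1}m_1^{n+1}$ yields $\Lambda=p\chi_{n+1}(ip)^{\otimes(n+1)}-(-1)^{n+1}p\chi_{n+1}m_1^{n+1}h_{n+1}$. The first summand equals $p\chi_{n+1}i^{\otimes(n+1)}p^{\otimes(n+1)}=m'_{n+1}p^{\otimes(n+1)}$, which is exactly the $r=n+1$ term of $R_{n+1}$, so it suffices to identify the second summand with $\sum_{\substack{j_1+\dots+j_r=n+1,\, 1<r<n+1}}(-1)^{l}m'_r(p_{j_1}\otimes\dots\otimes p_{j_r})$.

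To do this, expand $p\chi_{n+1}m_1^{n+1}=\sum_{s=2}^{n+1}p_{n+2-s}m_s^{n+1}m_1^{n+1}$ and apply Lemma \ref{lem:technical_lemma} to rewrite each $m_s^{n+1}m_1^{n+1}$; reindexing the resulting double sum by the arity $u$ of the rightmost remaining factor, the sum over the middle index collapses to the left-hand side of \eqref{eq:morphism} at level $n+2-u\le n$ (the boundary value $u=n+1$ contributing $p\,m_1=0$). Invoking the inductive hypothesis turns this into $R_{n+2-u}$, after which Lemma \ref{lem:technical_2_infty} peels the remaining homotopy $h_{n+1}$ off each factor $(p_{j_1}\otimes\dots\otimes p_{j_r})m_u^{n+1}h_{n+1}$. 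In each term the spectator factors are of the form $p_{j_l}(ip)^{\otimes j_l}$, which equals $p$ when $j_l=1$ and vanishes when $j_l\ge 2$ because $hi=0$ and $pi=\mathrm{id}$; hence only the summands whose composition ends in a block of $1$'s survive, and each survivor has the shape $\pm\,m'_r\big(p_{j_1}\otimes\dots\otimes p_{j_{q-1}}\otimes (p_{j_q}m_u^{u+j_q-1}h_{u+j_q-1})\otimes p^{\otimes r-q}\big)$ with $j_{q+1}=\dots=j_r=1$.

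The remaining and most delicate step — the analogue of the sign juggling at the end of the proof of Theorem \ref{thm:inverse_quasiiso}, now complicated by the extra summation over the arity $u$ — is to reassemble these terms into $R_{n+1}$ with the correct signs. I will group the survivors by their output composition $(k_1,\dots,k_r)$ of $n+1$, where $k_q=M\ge 2$ is its last entry $\ge 2$; the contributions to it are precisely those with $u+j_q-1=M$, i.e.\ $j_q=M-u+1$ for $u=2,\dots,M$. Because all entries past position $q$ equal $1$, the $u$-dependence of the sign $(-1)^{l(\dots)}$ coming from the inductive hypothesis reduces to $(-1)^{(M-u)(k_1+\dots+k_{q-1})}$, and this cancels against the Koszul sign $(-1)^{u(k_1+\dots+k_{q-1})}$ produced by Lemma \ref{lem:technical_2_infty}; so the coefficients of the $M-1$ contributions all agree, and the recursion $p_M=(-1)^M\sum_{j=1}^{M-1}p_j m_{M-j+1}^M h_M$ collapses the $q$-th slot to $(-1)^M p_M$, producing $m'_r(p_{k_1}\otimes\dots\otimes p_{k_r})$. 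A final sign check, using $(k_1+\dots+k_{q-1})+M+(r-q)=n+1$ so that the residual exponent equals $2(n+1)$, shows the coefficient is exactly $(-1)^{l(k_1,\dots,k_r)}$; combined with the fact that every composition of $n+1$ with $1<r<n+1$ is obtained this way exactly once and that no spurious terms remain, this closes the induction. The real obstacle throughout is this sign bookkeeping and the correct reordering of the nested sums — the structural identities (Lemmas \ref{lem:technical_lemma} and \ref{lem:technical_2_infty}, the homotopy identity, and the recursion for $p_n$) do the rest of the work.
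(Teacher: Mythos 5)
Your proposal is correct and follows essentially the same route as the paper's proof: the same induction, the same use of the homotopy identity to trade $h_{n+1}m_1^{n+1}$ for $\mathrm{id}-(ip)^{\otimes(n+1)}-m_1^{n+1}h_{n+1}$, the same invocation of Lemmas \ref{lem:technical_lemma} and \ref{lem:technical_2_infty}, and the same final reassembly via the recursion for $p_n$. The only differences are organizational (indexing the step as $n\to n+1$ and grouping the surviving terms by their output composition), and your preliminary identity $p\chi_n=\sum_{s=2}^n p_{n-s+1}m_s^n$ is exactly the paper's equation \eqref{eq:p_n_via_chi} in expanded form.
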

\begin{proof}
	We prove equation \eqref{eq:morphism} by induction on $n$. For $n= 1$, this is clear as
	\[
	pm_1 = 0 = m_1' p.
	\]
	Fix $n\geq 2$ and assume equation \eqref{eq:morphism} holds for all $1\leq i\leq n-1$. Then compute
	\begin{align*}
		&\sum_{r+j+t=n}(-1)^{rj+t}p_{r+t+1}(\mathrm{id}^{\otimes r}\otimes m_j \otimes \mathrm{id}^{\otimes t})\\
		&= \sum_{j=1}^n p_{n-j+1}m_j^n\\
		&= p_n m_1^n + \sum_{j=2}^n p_{n-j+1}m_j^n\\
		&= (-1)^{n} \sum_{j= 1}^{n-1} p_j m_{n-j+1}^n h_nm_1^n + \sum_{j'=1}^{n-1}p_{j'} m_{n-j'+1}^n := \clubsuit .
	\end{align*}
	Notice that
	\[
	h_nm_1^n = (-1)^{n-1}\mathrm{id}-(-1)^{n-1} (ip)^{\otimes n} -m_1^n h_n,
	\]
	and hence
	\begin{align*}
		\clubsuit &= \sum_{j= 1}^{n-1} p_j m_{n-j+1}^n (ip)^{\otimes n} +(-1)^{n-1} \sum_{j= 1}^{n-1} p_j m_{n-j+1}^n m_1^n h_n := \diamondsuit .
	\end{align*}
	The first term becomes
	\begin{align*}
		\sum_{j= 1}^{n-1} p_j m_{n-j+1}^n (ip)^{\otimes n} &\overset{\eqref{eq:p_n_via_chi}}{=} p m_n(ip)^{\otimes n} + p \sum_{j= 2}^{n-1}(-1)^{j} \chi_{j}h_j m_{n-j+1}^n (ip)^{\otimes n}\\
		&\overset{j':=n-j}{=} p m_n(ip)^{\otimes n} +p \underbrace{\sum_{j'= 1}^{n-2}(-1)^{n-j'} \chi_{n-j'}h_{n-j'} m_{j'+1}^n}_{=\chi_n-m_n} (ip)^{\otimes n}\\
		&= p\chi_n i^{\otimes n} p^{\otimes n}\\
		&= m_n'p^{\otimes n}.
	\end{align*}
	Thus
	\[
	\diamondsuit = m_n'p^{\otimes n} +(-1)^{n-1} \sum_{j= 1}^{n-1} p_j m_{n-j+1}^n m_1^n h_n=: \spadesuit.
	\]
	The first term is already of the proper form, let us focus on the second term. Applying Lemma \ref{lem:technical_lemma} we get
	\begin{align*}
		(-1)^{n-1} \sum_{j= 1}^{n-1} p_j m_{n-j+1}^n m_1^n h_n &= (-1)^{n} \sum_{j= 1}^{n-1} \sum_{i=1}^{n-j} p_j m_i^{j-1} m^n_{n-j-i+2} h_n.
	\end{align*}
To apply the induction assumtion we change the summation index $i':= i+j$
\begin{align*}
		&= (-1)^{n} \sum_{j= 1}^{n-1} \sum_{i'=j+1}^{n} p_j m_{i'-j}^{i'-1} m^n_{n-i'+2} h_n = (-1)^{n} \sum_{i'=2}^{n} \sum_{j= 1}^{i'-1} p_j m_{i'-j}^{i'-1} m^n_{n-i'+2} h_n\\
		&= (-1)^{n} \sum_{i'=2}^{n} \sum_{j'= 1}^{i'-1} p_{(i'-1)-j'+1} m_{j'}^{i'-1} m^n_{n-i'+2} h_n\\
		&\overset{\mathrm{i.a.}}{=} (-1)^{n+1} \sum_{i'=2}^{n}\sum_{r=1}^{i'-1} \sum_{j_1+...+j_r = i'-1}(-1)^{l(j_1,...,j_r)} m_r'(p_{j_1}\otimes ... \otimes p_{j_r}) m^n_{n-i'+2} h_n\\
		&= (-1)^{n} \sum_{r=1}^{n-1}\sum_{i'=r+1}^{n} \sum_{j_1+...+j_r = i'-1}(-1)^{l(j_1,...,j_r)} m_r'(p_{j_1}\otimes ... \otimes p_{j_r}) m^n_{n-i'+2} h_n.
	\end{align*}
	By Lemma \ref{lem:technical_2_infty} this becomes
\begin{align*}
    &= (-1)^{n} \sum_{r=1}^{n-1} \sum_{i'=r+1}^{n} \sum_{j_1+...+j_r = i'-1} (-1)^{l(j_1,...,j_r)} m_r'(p_{j_1}\otimes ... \otimes p_{j_r}) \\
    &\qquad \sum_{k=1}^r (-1)^{(j_1+...+j_{k-1})(n-i'+2)+j_{k+1}+...+j_r}\\
    &\qquad (\mathrm{id}^{\otimes j_1+...+j_{k-1}}\otimes m_{n-i'+2}^{n-i'+1+j_k}h_{n-i'+1+j_k} \otimes (ip)^{\otimes j_{k+1}+...+j_r})
\end{align*}
\begin{align*}
    &= (-1)^{n} \sum_{r=1}^{n-1} \sum_{i'=r+1}^{n} \sum_{j_1+...+j_r = i'-1} (-1)^{l(j_1,...,j_r)} m_r'\sum_{k=1}^r p_{j_1}\otimes ... \otimes p_{j_{k-1}}\\
    & \qquad(-1)^{(j_{k+1}+...+j_r-r+k)(n-i')} (-1)^{(j_1+...+j_{k-1})(n-i')+j_{k+1}+...+j_r}\\
    &\qquad \otimes p_{j_k}m_{n-i'+2}^{n-i'+1+j_k}h_{n-i'+1+j_k} \otimes p_{j_{k+1}}(ip)^{\otimes j_{k+1}} \otimes ... \otimes p_{j_r}(ip)^{\otimes j_{r}}.
\end{align*}
Observe that $p_{j_{k+1}}(ip)^{\otimes j_{k+1}} \otimes ... \otimes p_{j_r} (ip)^{\otimes j_r}= p^{\otimes j_{k+1}+...+j_r}\delta_{j_{k+1},1}...\delta_{j_r,1}$, hence we get
\begin{align*}
    &= \sum_{r=1}^{n-1} m_r'\sum_{k=1}^r \sum_{i'=r+1}^{n} \sum_{j_1+...+j_k+(r-k) = i'-1} (-1)^{l(j_1,...,j_k,1,...,1)}p_{j_1}\otimes ... \otimes p_{j_{k-1}}\\
    &\qquad (-1)^{(j_1+...+j_{k-1})(n-i')+n+r-k} \otimes p_{j_k} m_{n-i'+2}^{n-i'+1+j_k}h_{n-i'+1+j_k} \otimes p^{\otimes r-k}\\
    &=(-1)^n \sum_{r=1}^{n-1} m_r'\sum_{k=1}^r \sum_{i'=r+1}^{n} \sum_{j_1=1}^{i'-1+(r-1)}...\sum_{j_{k-1}=1}^{i'-1+j_1+...+j_{k-2}-(r-k+1)} p_{j_1}\otimes ... \otimes p_{j_{k-1}} \\
    & \qquad (-1)^{l(j_1,...,j_{k-1},i'-1-j_1-...-j_{k-1}-(r-k),1,...,1)}(-1)^{(j_1+...+j_{k-1})(n-i')+r-k}\\
    & \qquad \otimes p_{i'-1-j_1-...-j_{k-1}-(r-k)} m_{n-i'+2}^{n-r+k-j_1-...-j_{k-1}}h_{n-r+k-j_1-...-j_{k-1}} \otimes p^{\otimes r-k}.
\end{align*}
Permute the sum over $i'$ to the back and rewrite the sign using the definition of $l(j_1,...,j_r)$
\begin{align*}
    &= \sum_{r=1}^{n-1} m_r'\sum_{k=1}^r \sum_{j_1=1}^{n-r}...\sum_{j_{k-1}=1}^{n-r-j_1-...-j_{k-2}+(k-2)} \sum_{i'=r+j_1+...+j_{k-1}}^{n} p_{j_1}\otimes ... \otimes p_{j_{k-1}} \otimes \\
    & \qquad (-1)^{l(j_1,...,j_{k-1},n-1-j_1-...-j_{k-1}-(r-k),1,...,1)}(-1)^{j_1+...+j_{k-1}+n+r-k}\\
    &\qquad p_{i'-1-j_1-...-j_{k-1}-(r-k)} m_{n-i'+2}^{n-r+k-j_1-...-j_{k-1}}h_{n-r+k-j_1-...-j_{k-1}} \otimes p^{\otimes r-k}.
\end{align*}
Rename the index $i:= i'-1-(r-k)-j_1-...-j_{k-1}$ to obtain
\begin{align*}
    &= \sum_{r=1}^{n-1} m_r'\sum_{k=1}^r \sum_{j_1=1}^{n-r}...\sum_{j_{k-1}=1}^{n-r-j_1-...-j_{k-2}+(k-2)} p_{j_1}\otimes ... \otimes p_{j_{k-1}}\otimes \\
    & \qquad (-1)^{l(j_1,...,j_{k-1},n-1-j_1-...-j_{k-1}-(r-k),1,...,1)}(-1)^{n-(r-k)-j_1-...-j_{k-1}} \\
    &\sum_{i=1}^{n-(r-k)-j_1-...-j_{k-1}-1} p_{i} m_{n-r+k-j_1-...-j_{k-1}-i+1}^{n-r+k-j_1-...-j_{k-1}}h_{n-r+k-j_1-...-j_{k-1}} \otimes p^{\otimes r-k}.
\end{align*}
Recognize the inductive definition of $p_{n-r+k-j_1-...-j_{k-1}}$ to obtain
\begin{align*}
    &= \sum_{r=1}^{n-1} m_r'\sum_{k=1}^r \sum_{j_1=1}^{n-r}...\sum_{j_{k-1}=1}^{n-r-j_1-...-j_{k-2}+(k-2)} (-1)^{l(j_1,...,j_{k-1},n-1-j_1-...-j_{k-1}-(r-k),1,...,1)} \\
    & \qquad\qquad\qquad\qquad\qquad p_{j_1}\otimes ... \otimes p_{j_{k-1}} \otimes p_{n-r+k-j_1-...-j_{k-1}} \otimes p^{\otimes r-k}\\
    &= \sum_{r=1}^{n-1} m_r' \sum_{k=1}^r \sum_{\substack{j_1+...+j_r=n\\j_k\neq 1, j_{k+1}=...=j_r=1}} (-1)^{l(j_1,...,j_{r})} p_{j_1}\otimes ... \otimes p_{j_r}\\
    &= \sum_{r=1}^{n-1} m_r' \sum_{j_1+...+j_r=n} (-1)^{l(j_1,...,j_{r})} p_{j_1}\otimes ... \otimes p_{j_r}.
\end{align*}
It follows that 
\[
\spadesuit = \sum_{r=1}^{n} \sum_{j_1+...+j_r=n} (-1)^{l(j_1,...,j_{r})} m_r'(p_{j_1}\otimes ... \otimes p_{j_r})
\]
which completes the proof that $p$ is a morphism of $A_\infty$-algebras.
\end{proof}

\appendix
\section{Composition of \texorpdfstring{$A_\infty$}{TEXT}-Algebra Morphisms}
\label{sec:appendix_b}
In the following the signs have been neglected for ease of notation. Let $f:(A,m)\rightarrow (A',m')$ and $g:(A',m')\rightarrow(A'',m'')$ be two morphisms between $A_\infty$-algebras. Recall that composition of theses two morphisms is defined by
\[
(g\circ f)_n := \sum_{i_1+...+i_r=n} g_r(f_{i_1}\otimes ... \otimes f_{i_r}).
\]
We prove that this is well defined, i.e.\ the composition is again a morphism of $A_\infty$-algebras. Before getting started on the proof, let us show the following technical lemma:
\begin{lemma} \label{lem:appendix_techincal}
Let $\{f_n\}_{n\in \mathbb{N}}$ be a sequence of maps. Let $n\geq 2$ for all $2\leq r\leq n$ and all decompositions into positive integers $r_1+r_2=r$ we have
\begin{equation} \label{eq:appendix_f}
	\sum_{k=r_1}^{n-r_2}\sum_{i_1+...+i_{r_1}=k}\sum_{j_1+...+j_{r_2}=n-k}f_{i_1}\otimes ...\otimes f_{i_{r_1}}\otimes f_{j_1}\otimes ...\otimes f_{j_{r_2}}= \sum_{i_1+...+i_r=n}f_{i_1}\otimes ... \otimes f_{i_r}.
\end{equation}
\end{lemma}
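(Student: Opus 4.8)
The statement is a purely combinatorial identity about how a sum over ordered compositions $i_1+\dots+i_r = n$ can be split according to a chosen partition $r = r_1 + r_2$ of the number of parts. The plan is to prove it by a direct bijection of index sets, with no induction needed — although an induction on $r_2$ (peeling off one factor at a time) would also work and is perhaps cleaner to write. Let me sketch the bijective approach first, then note the inductive fallback.

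\medskip

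\textbf{Bijective approach.} Fix $n \ge 2$, $2 \le r \le n$, and a decomposition $r_1 + r_2 = r$ into positive integers. The right-hand side runs over all tuples $(i_1,\dots,i_r)$ of positive integers summing to $n$. The left-hand side runs over triples consisting of a choice of $k$ with $r_1 \le k \le n - r_2$, a tuple $(i_1,\dots,i_{r_1})$ of positive integers summing to $k$, and a tuple $(j_1,\dots,j_{r_2})$ of positive integers summing to $n-k$. The map
\[
\bigl(k,\ (i_1,\dots,i_{r_1}),\ (j_1,\dots,j_{r_2})\bigr)\ \longmapsto\ (i_1,\dots,i_{r_1},\,j_1,\dots,j_{r_2})
\]
is the obvious candidate bijection. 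First I would check it is well-defined: concatenation gives a tuple of $r_1 + r_2 = r$ positive integers whose sum is $k + (n-k) = n$, so it lands in the index set of the right-hand side; the constraint $r_1 \le k \le n-r_2$ is exactly what is needed for both sub-tuples to consist of positive integers (a sum of $r_1$ positive integers is at least $r_1$, and at most $n - r_2$ since the other $r_2$ parts are each at least $1$), so it is automatically satisfied and imposes no extra restriction. For the inverse, given a tuple $(i_1,\dots,i_r)$ summing to $n$, set $k := i_1 + \dots + i_{r_1}$; then $r_1 \le k \le n - r_2$ holds automatically, and we recover the triple by cutting after the $r_1$-th entry. These two maps are mutually inverse by inspection, and since the summands on both sides are literally the same tensor monomial $f_{i_1}\otimes\dots\otimes f_{i_r}$, the identity of sums follows.

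\medskip

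\textbf{Inductive fallback.} Alternatively, induct on $r_2$. The base case $r_2 = 1$ reads $\sum_{k=r_1}^{n-1}\sum_{i_1+\dots+i_{r_1}=k} f_{i_1}\otimes\dots\otimes f_{i_{r_1}}\otimes f_{n-k} = \sum_{i_1+\dots+i_r=n} f_{i_1}\otimes\dots\otimes f_{i_r}$, which is just the statement that specifying the last part $i_r = n-k$ and the sum $k$ of the first $r_1 = r-1$ parts is the same as specifying all $r$ parts. The inductive step peels off $f_{j_{r_2}}$ in the same way and reassociates the inner sums. This is routine.

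\medskip

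\textbf{Expected main obstacle.} There is essentially no mathematical obstacle here — the lemma is a bookkeeping identity. The only thing to be careful about is the range of the outer summation variable $k$: one must verify that the stated bounds $r_1 \le k \le n-r_2$ are exactly the feasible ones, so that nothing is double-counted and nothing is omitted, and in particular that these bounds are forced rather than artificially imposed. Beyond that, the write-up is a one-paragraph bijection argument.
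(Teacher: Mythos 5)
Your proposal is correct, but it takes a genuinely different route from the paper. The paper proves the lemma by a double induction on $n$ and $r$: after handling the base cases $n=r=2$ and $r=2$ for general $n$, it peels off the first factor $f_{i_1}$, interchanges the two outermost sums, and invokes the induction hypothesis on the remaining $r-1$ factors. You instead exhibit the explicit bijection between the two index sets — concatenation of the two sub-tuples in one direction, and cutting after the $r_1$-th entry with $k := i_1+\dots+i_{r_1}$ in the other — and observe that the summands are literally the same tensor monomial under this correspondence. Your check that the bounds $r_1 \le k \le n-r_2$ are exactly the feasible values (each part being a positive integer forces $k \ge r_1$ and $n-k \ge r_2$) is the one point of care, and you address it correctly; this guarantees nothing is omitted or double-counted. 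Your approach is shorter and arguably more transparent, since it makes visible that the identity is pure re-indexing with no analytic or algebraic content; the paper's induction is more uniform with the style of the surrounding appendix, which consists entirely of nested-sum manipulations, and your own inductive fallback on $r_2$ is essentially a variant of the paper's argument with the roles of the first and last factors exchanged. Either write-up would be acceptable.
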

\begin{proof}
	The proof is by induction on $n$ and $r$: For $n=r=2$ both sides of equation \eqref{eq:appendix_f} are equal to $f_1\otimes f_1$. For the rest of the proof fix $n\geq 2$. If $r=2$, then both sided of equation \eqref{eq:appendix_f} are equal to 
	\[
	\sum_{k=1}^{n-1}f_k\otimes f_{n-k}.
	\]
	Assume equation \eqref{eq:appendix_f} holds for all $2\leq m\leq n$ and all $2\leq r_1+r_2\leq m$. Then we prove equation \eqref{eq:appendix_f} for $n+1$ and $3\leq r_1+r_2\leq n+1$: Without loss of generality assume that $r_1\geq 2$. Starting on the left-hand side of equation \eqref{eq:appendix_f}
	\begin{align*}
		&\quad \sum_{k=1}^{n+1-r_2}\sum_{i_1+...+i_{r_1}=k} \sum_{j_1+...+j_{r_2}=n+1-k} f_{i_1}\otimes ...\otimes f_{i_{r_1}}\otimes f_{j_1}\otimes ...\otimes f_{j_{r_2}}\\
		&= \sum_{k=1}^{n+1-r_2} \sum_{i_1=1}^{k-r_1+1} \sum_{i_2+...+i_{r_1}=k-i_1} \sum_{j_1+...+j_{r_2}=n+1-k} f_{i_1}\otimes ...\otimes f_{i_{r_1}}\otimes f_{j_1}\otimes ...\otimes f_{j_{r_2}}.
	\end{align*}
	Interchanging the first two sums:
	\begin{align*}
		&= \sum_{i_1=1}^{n+2-r} f_{i_1}\otimes \sum_{k=r_1+i_1-1}^{n+1-r_2} \sum_{i_2+...+i_{r_1}=k-i_1} \sum_{j_1+...+j_{r_2}=n+1-k} f_{i_2}\otimes ...\otimes f_{i_{r_1}}\otimes f_{j_1}\otimes ...\otimes f_{j_{r_2}}\\
		& \overset{\mathrm{i.a.}}{=} \sum_{i_1=1}^{n+1-(r-1)} f_{i_1}\otimes\sum_{i_2+...+i_r=n+1-i_1} f_{i_2}\otimes ... \otimes f_{i_r}\\
		&= \sum_{i_1+...+i_r=n+1} f_{i_1}\otimes ... \otimes f_{i_r}.
	\end{align*}
\end{proof}

Recall the shorthand notion for $1\leq l\leq n$:
\[
m_l^n = \sum_{r+l+t=n}\mathrm{id}^{\otimes r}\otimes m_l\otimes \mathrm{id}^{\otimes t}.
\]
For $n$ arbitrary start computing the main equation:
\begin{align*}
	&\quad \sum_{r+l+t=n} (g\circ f)_{r+t+1}(\mathrm{id}^{\otimes r}\otimes m_l\otimes \mathrm{id}^{\otimes t})\\
	&= \sum_{l=1}^n (g\circ f)_{n-l+1}m_l^n\\
	&= \sum_{l=1}^n\sum_{r=1}^{n-l+1} \sum_{i_1+...+i_r=n-l+1} g_r(f_{i_1}\otimes ... \otimes f_{i_r}) m_l^n\\
	&= \sum_{r=1}^n\sum_{l=1}^{n-r+1} \sum_{i_1+...+i_r=n-l+1} g_r(f_{i_1}\otimes ... \otimes f_{i_r}) m_l^n.
\end{align*}
We split the last term as follows:
\begin{align*}
	(f_{i_1}\otimes ... \otimes f_{i_r}) m_l^n &= (f_{i_1}m_l^{i_1+l-1} \otimes f_{i_2}... \otimes f_{i_r})+...+(f_{i_1}\otimes ... \otimes f_{i_{r-1}} \otimes f_{i_r} m_l^{i_r+l-1}).
\end{align*}
Insert this into the main equation:
\begin{align*}
	&\sum_{r=1}^n\sum_{l=1}^{n-r+1} \sum_{i_1+...+i_r=n-l+1} g_r(f_{i_1}m_l^{i_1+l-1} \otimes f_{i_2}... \otimes f_{i_r}))+...\\
	&...+ \sum_{r=1}^n\sum_{l=1}^{n-r+1} \sum_{i_1+...+i_r=n-l+1} g_r(f_{i_1}\otimes ... \otimes f_{i_{r-1}} \otimes f_{i_r} m_l^{i_r+l-1})
\end{align*}
and push the sum over $l$ to the back:
\begin{align*}
	&= \sum_{r=1}^n \big[\sum_{i_2=1}^{n-r+1} ... \sum_{i_r=1}^{n-1-i_2-...-i_{r-1}} \sum_{l=1}^{n-i_2-...-i_{r}} g_r(f_{n-l+1-i_2-...-i_r}m_l^{n-i_2-...-i_r} \otimes f_{i_2}... \otimes f_{i_r}))+... \\
	&+ \sum_{i_1=1}^{n-r+1}... \sum_{i_{r-1}=1}^{n-1-i_1-...-i_{r-2}}\sum_{l=1}^{n-i_1-...-i_{r-1}}g_r(f_{i_1}\otimes ... \otimes f_{i_{r-1}} \otimes f_{n-l+1-i_1-...-i_{r-1}} m_l^{n-i_1-...-i_{r-1}})\big].
\end{align*}
Now using the fact that $f$ is a morphism:
\begin{align*}
	&= \sum_{r=1}^n \big[\sum_{i_2=1}^{n-r+1} ... \sum_{i_r=1}^{n-1-i_2-...-i_{r-1}} \sum_{j_1+...+j_s=n-i_2-...-i_r} g_r(m'_s (f_{j_1}\otimes ... \otimes f_{j_s}) \otimes f_{i_2}... \otimes f_{i_r}))+... \\
	&+ \sum_{i_1=1}^{n-r+1}... \sum_{i_{r-1}=1}^{n-1-i_1-...-i_{r-2}}\sum_{j_1+...+j_s=n-i_1-...-i_{r-1}} g_r(f_{i_1}\otimes ... \otimes f_{i_{r-1}} \otimes m'_s (f_{j_1}\otimes ... \otimes f_{j_s}) )\big].
\end{align*}
Note that there is a sum over $s$ hiding in the notation. Push the sum over $s$ to the front:
\begin{align*}
	&= \sum_{s=1}^n \sum_{r=1}^{n-s+1}\big[\sum_{i_2=1}^{n+2-r-s} ... \sum_{i_r=1}^{n-i_2-...-i_{r-1}-s} \sum_{j_1+...+j_s=n-i_2-...-i_r} g_r(m'_s (f_{j_1}\otimes ... \otimes f_{j_s}) \otimes f_{i_2}... \otimes f_{i_r}))+... \\
	&+ \sum_{i_1=1}^{n+2-r-s}... \sum_{i_{r-1}=1}^{n-i_1-...-i_{r-2}-s}\sum_{j_1+...+j_s=n-i_1-...-i_{r-1}} g_r(f_{i_1}\otimes ... \otimes f_{i_{r-1}} \otimes m'_s (f_{j_1}\otimes ... \otimes f_{j_s}) )\big].
\end{align*}
Now move the $g$ and $m'$ terms in front of the sums:
\begin{align*}
	&= \sum_{s=1}^n \sum_{r=1}^{n-s+1}g_r\big[m'_s \otimes \mathrm{id}^{\otimes r-1}\sum_{i_2=1}^{n+2-r-s} ... \sum_{i_r=1}^{n-i_2-...-i_{r-1}-s} \sum_{j_1+...+j_s=n-i_2-...-i_r} \\
	&\qquad\qquad\qquad\qquad\qquad\qquad\qquad\qquad\qquad\qquad (f_{j_1}\otimes ... \otimes f_{j_s} \otimes f_{i_2}... \otimes f_{i_r}))+... \\
	&+ \mathrm{id}^{\otimes r-1}\otimes m_s'\sum_{i_1=1}^{n+2-r-s}... \sum_{i_{r-1}=1}^{n-i_1-...-i_{r-2}-s}\sum_{j_1+...+j_s=n-i_1-...-i_{r-1}}\\
	&\qquad\qquad\qquad\qquad\qquad\qquad\qquad\qquad\qquad\qquad (f_{i_1}\otimes ... \otimes f_{i_{r-1}} \otimes f_{j_1}\otimes ... \otimes f_{j_s} )\big].
\end{align*}
Notice that the index $s$ -- appearing in the sum sums over $i$ and $j$ -- is a dummy index and after renaming the other indices we get
\begin{align*}
	&= \sum_{s=1}^n \sum_{r=1}^{n-s+1}g_r \big[m'_s \otimes \mathrm{id}^{\otimes r-1} \sum_{i_1+...+i_{r+s-1}=n} (f_{i_1}\otimes ... \otimes f_{i_{r+s-1}}) +... \\
	& \qquad\qquad\qquad + \mathrm{id}^{\otimes r-1}\otimes m_s' \sum_{i_1+...+i_{r+s-1}=n} (f_{i_1}\otimes ... \otimes f_{i_{r+s-1}} )\big].
\end{align*}
Now set $r':= r+s-1$
\[
\sum_{s=1}^n \sum_{r'=s}^{n}g_{r'-s+1} \big[m'_s \otimes \mathrm{id}^{\otimes r'-s} +... + \mathrm{id}^{\otimes r'-s}\otimes m_s' \big] \sum_{i_1+...+i_{r'}=n} (f_{i_1}\otimes ... \otimes f_{i_{r'}} ).
\]
Rename the index $r'$ to $r$ and then interchange the first two sums:
\begin{align*}
	&\quad \sum_{s=1}^n \sum_{r=s}^{n}g_{r-s+1} {m'}_s^{r} \sum_{i_1+...+i_r=n} (f_{i_1}\otimes ... \otimes f_{i_r} )\\
	&= \sum_{r=1}^n \sum_{s=1}^r g_{r-s+1} {m'}_s^{r} \sum_{i_1+...+i_r=n} (f_{i_1}\otimes ... \otimes f_{i_r} )\\
	&= \sum_{r=1}^n \sum_{s=1}^{r} \sum_{j_1+...+j_s=r} m_s'' (g_{j_1}\otimes...\otimes g_{j_s}) \sum_{i_1+...+i_r=n} (f_{i_1}\otimes ... \otimes f_{i_r} )\\
	&= \sum_{s=1}^n m_s'' \sum_{r=s}^{n} \sum_{j_1+...+j_s=r} \sum_{i_1+...+i_r=n} (g_{j_1}\otimes...\otimes g_{j_s}) (f_{i_1}\otimes ... \otimes f_{i_r})
\end{align*}
It remains to show that
\begin{align} \label{eq:appendix_induction}
	&\sum_{r=s}^{n} \sum_{j_1+...+j_s=r} \sum_{i_1+...+i_r=n} (g_{j_1}\otimes...\otimes g_{j_s}) (f_{i_1}\otimes ... \otimes f_{i_r})\nonumber \\
	&= \sum_{i_1+...+i_s=n}(g\circ f)_{i_1}\otimes ...\otimes (g\circ f)_{i_s}
\end{align}
for all $n$ and all $1\leq s\leq n$. The proof is by induction: The induction start is $n=s=1$ where both sides of equation \eqref{eq:appendix_induction} are equal to $g_1\circ f_1$. Fix $n\geq 1$ and assume \eqref{eq:appendix_induction} is true for all $1\leq m\leq n$ and all $1\leq s\leq m$. First observe that for $n+1$ and $s=1$ both sides of equation \eqref{eq:appendix_induction} are equal to $(g\circ f)_n$. Now let $2\leq s\leq n+1$ and start with the right-hand side of equation \eqref{eq:appendix_induction}:
\begin{align*}
	&\sum_{i_1+...+i_s=n+1}(g\circ f)_{i_1}\otimes ...\otimes (g\circ f)_{i_s}\\
	&= \sum_{i_1=1}^{n+2-s}(g\circ f)_{i_1}\otimes \sum_{i_2+...+i_s=n+1-i_1}(g\circ f)_{i_2}\otimes ...\otimes (g\circ f)_{i_s}\\
	&\overset{\mathrm{i.a.}}{=} \sum_{i_1=1}^{n+2-s}(g\circ f)_{i_1}\otimes \sum_{r=s-1}^{n+1-i_1} \sum_{j_2+...+j_s =r} \sum_{k_1+...+k_r=n+1-i_1} (g_{j_2}\otimes...\otimes g_{j_s}) (f_{k_1}\otimes ... \otimes f_{k_r})\\
	&\overset{k:=i_1}{=} \sum_{k=1}^{n+2-s}\sum_{r=s-1}^{n+1-k}\sum_{l=1}^k \sum_{i_1+...+i_l=k} \sum_{j_2+...+j_s =r} \sum_{k_1+...+k_r=n+1-k}\\
	&\qquad \qquad \qquad \qquad (g_l\otimes g_{j_2}\otimes ...\otimes g_{j_s}) (f_{1_1}\otimes ...\otimes f_{i_l}\otimes f_{k_1}\otimes ... \otimes f_{k_r}).
\end{align*}
Now set $r':= n+1-r$ 
\begin{align*}
	&\quad \sum_{k=1}^{n+2-s}\sum_{r'=k}^{n+2-s}\sum_{l=1}^k \sum_{i_1+...+i_l=k} \sum_{j_2+...+j_s =n+1-r'} \sum_{k_1+...+k_{n+1-r'}=n+1-k}\\
	&\qquad \qquad \qquad \qquad (g_l\otimes g_{j_2}\otimes ...\otimes g_{j_s}) (f_{1_1}\otimes ...\otimes f_{i_l}\otimes f_{k_1}\otimes ... \otimes f_{k_{n+1-r'}}).
\end{align*}
Now interchange the sums as follows
\[
\sum_{k=1}^{n+2-s}\sum_{r'=k}^{n+2-s}\sum_{l=1}^k = \sum_{r'=1}^{n+2-s}\sum_{k=1}^{r'}\sum_{l=1}^k= \sum_{r'=1}^{n+2-s}\sum_{l=1}^{r'}\sum_{k=l}^{r'}.
\]
The main equation becomes
\begin{align*}
	&\sum_{r'=1}^{n+2-s}\sum_{l=1}^{r'}\sum_{k=l}^{r'} \sum_{i_1+...+i_l=k} \sum_{j_2+...+j_s =n+1-r'} \sum_{k_1+...+k_{n+1-r'}=n+1-k}\\
	&\qquad \qquad \qquad \qquad (g_l\otimes g_{j_2}\otimes ...\otimes g_{j_s}) (f_{1_1}\otimes ...\otimes f_{i_l}\otimes f_{k_1}\otimes ... \otimes f_{k_{n+1-r'}})\\
	&= \sum_{r'=1}^{n+2-s}\sum_{l=1}^{r'} \sum_{j_2+...+j_s =n+1-r'} (g_l\otimes g_{j_2}\otimes ...\otimes g_{j_s}) \\
	&\quad \sum_{k=l}^{r'} \sum_{i_1+...+i_l=k} \sum_{k_1+...+k_{n+1-r'}=n+1-k} (f_{1_1}\otimes ...\otimes f_{i_l}\otimes f_{k_1}\otimes ... \otimes f_{k_{n+1-r'}})\\
	&= \sum_{r'=1}^{n+2-s}\sum_{l=1}^{r'} \sum_{j_2+...+j_s =n+1-r'} (g_l\otimes g_{j_2}\otimes ...\otimes g_{j_s}) \sum_{i_1+...+i_{n+1-r'+l}=n+1}(f_{i_1}\otimes ... \otimes f_{i_{n+1-r'+l}})
\end{align*}
Where the last equality uses Lemma \ref{lem:appendix_techincal}. Shift the index $l$ in the following way $r:= r'-l+1$
\begin{align*}
	&= \sum_{r'=1}^{n+2-s}\sum_{r=1}^{r'} \sum_{j_2+...+j_s =n+1-r'} (g_{r'-r+1}\otimes g_{j_2}\otimes ...\otimes g_{j_s}) \sum_{i_1+...+i_{n+2-r}=n+1} (f_{i_1}\otimes ... \otimes f_{i_{n+2-r}})
\end{align*}
Then after interchanging the sums
\[
\sum_{r'=1}^{n+2-s}\sum_{r=1}^{r'} = \sum_{r=1}^{n+2-s}\sum_{r'=r}^{n+2-s}
\]
shift the index $r'$ as follows $j_1 := r'-r+1$ 
\begin{align*}
	& \sum_{r=1}^{n+2-s}\sum_{j_1=1}^{n+3-s-r} \sum_{j_2+...+j_s =n+2-r-j_1} (g_{j_1}\otimes ...\otimes g_{j_s})\sum_{i_1+...+i_{n+2-r}=n+1} (f_{i_1}\otimes ... \otimes f_{i_{n+2-r}})\\
	&= \sum_{r=1}^{n+2-s} \sum_{j_1+...+j_s =n+2-r} (g_{j_1}\otimes ...\otimes g_{j_s}) \sum_{i_1+...+i_{n+2-r}=n+1} (f_{i_1}\otimes ... \otimes f_{i_{n+2-r}}).
\end{align*}
Finally shift the index $r$ as follows $r':= n+2-r$ we obtain
\[
\sum_{r=s}^{n+1} \sum_{j_1+...+j_s =r'} (g_{j_1}\otimes ...\otimes g_{j_s}) \sum_{i_1+...+i_{r'}=n+1} (f_{i_1}\otimes ... \otimes f_{i_{r'}}).
\]
This completes the induction step. 
\newpage
\printbibliography	
\end{document}